\newcommand {\new} {\newcommand}
\newcommand \oper[2] {\new #1 {\operatorname{#2}}}
\newcommand \gcode[1] {\ulcorner\! #1 \!\urcorner}	
\newcommand \se [1] { \{ #1 \}}			
\newcommand\set [2]{ \{#1:#2\} }			
\newcommand\res {\!\upharpoonright\!}		
\newcommand\elem {\prec}				
\newcommand\eqiv {\leftrightarrow}			
\newcommand\corner[1] {
  \langle #1 \rangle}		
\newcommand\power {{\mathscr{P}}}			
\newcommand\coll{{\text{Coll}}}
\newcommand{\card}{\operatorname{card}}
\newcommand\concat {{^\frown}}   
\oper{\Ord}{Ord}				
\oper{\hull}{Hull}				
\oper{\ppt}{ppt} 				
\newcommand{\ord} {\Ord}
\oper{\ZFC}{ZFC}				
\oper{\rank}{rank}				
\oper{\crit}{cr}					
\oper{\crt}{crt}					
\oper{\cf}{cf}					
\oper{\height}{ht}				
\oper{\wfcore}{wfcore}					
\oper{\core}{core}					
\oper{\Ult}{Ult}				
\oper{\Cone}{Cone}				
\oper{\dirlim}{dirlim}
\oper{\rud}{rud}		
\oper{\const}{const}
\oper{\OD}{OD}
\oper{\final}{final}
\oper{\HYP}{HYP}
\oper{\wfp}{wfp}
\oper{\Hom}{Hom}
\new{\ult} {\Ult}
\oper{\dom}{dom}
\oper{\rep}{rep}
\newcommand{\bardom}{\dom^{*}}
\newcommand{\exdesc}{\desc^{*}}
\oper{\suc}{succ}
\oper{\fac}{fac}
\oper{\Code}{Code}
\oper{\ran}{ran}
\oper{\maxdom}{maxdom}
\oper{\maxran}{maxran}
\oper{\lp}{Lp} 
\newcommand{\Los}{{\L}o\'{s}}
\newcommand\lengtheneq {\vartriangleright}	
\newcommand\lengthen {\triangleright}		
\newcommand\inisegeq {\trianglelefteq}	
 \newtheorem{mytheorem}{Theorem}[section]
 \newtheorem{myproposition}[mytheorem]{Proposition}
 \newtheorem{mylemma}[mytheorem]{Lemma}
 \newtheorem{mycorollary}[mytheorem]{Corollary}
 \newtheorem{myclaim}[mytheorem]{Claim}
 \theoremstyle{mydefinition}\newtheorem{remark}[mytheorem]{Remark}
 \newtheorem{mydefinition}[mytheorem]{Definition}
\newcommand{\game}{{\Game}}
\newcommand{\boldpi}[1]{{\boldsymbol{\Pi}^1_{#1}}}
\newcommand{\boldsigma}[1]{{\boldsymbol{\Sigma}^1_{#1}}}
\newcommand{\boldgameclass}[1]{{\game ^{#1}(<\!\omega^2\textnormal{-}\mathbf\Pi{^1_{1}})}}
\newcommand{\bolddelta}[1]{{\boldsymbol{{\delta}}^1_{#1}}}
\newcommand{\boldDelta}[1]{{\boldsymbol{{\Delta}}^1_{#1}}}
\newcommand{\admistwo}[1]{L_{\kappa_3^{#1}}[T_2,#1]}
\newcommand{\admistwobold}{{  \mathbb{L}_{\boldsymbol{{\delta}}^1_3}[T_2] }}
\newcommand{\WO}{{\textrm{WO}}}
\newcommand{\LO}{{\textrm{LO}}}
\newcommand{\DEF}{=_{{\textrm{DEF}}}}
\newcommand{\desc}{{\operatorname{desc}}}
\newcommand{\Diff}{\operatorname{Diff}}
\newcommand{\id}{\operatorname{id}}
\newcommand{\tree}{\operatorname{tree}}
\newcommand{\node}{\operatorname{node}}
\newcommand{\seed}{\operatorname{seed}}
\newcommand{\pred}{\operatorname{pred}}
\newcommand{\ot}{\mbox{o.t.}}
\newcommand{\sharpcode}[1]{  \left|   #1  \right|}
\newcommand{\absvalue}[1]{  \left|   #1  \right|}
\newcommand{\wocode}[1]{  \|   #1  \|}
\newcommand{\lessthanshort}[1]{{<\! #1}}
\newcommand{\comm}[1]{{}}
\newcommand{\comp}[2]{{}^{#1}\!#2}
\oper{\ucf}{ucf}					
\oper{\sign}{sign}					
\oper{\lh}{lh}
\begin{document}

\title{The higher sharp II: on $M_2^{\#}$}

\author{Yizheng Zhu}

\affil{Institut f\"{u}r mathematische Logik und Grundlagenforschung \\
Fachbereich Mathematik und Informatik\\
Universit\"{a}t M\"{u}nster\\
Einsteinstr. 62 \\
48149 M\"{u}nster, Germany}

\maketitle

\begin{abstract}
We establish the descriptive set theoretic representation of the mouse $M_n^{\#}$, which is called $0^{(n+1)\#}$. This part partially deals with the case $n=2$ by proving the many-one equivalence of $M_2^{\#}$ and the theory of $L_{\bolddelta{3}}[T_3]$ with the higher level analogs of $L$-indiscernibles.
\end{abstract}

\section{Introduction}
\label{sec:introduction}

This is the second part of a series starting with \cite{sharpI}. It defines the higher level analogs of order indiscernibles for $L$. They are level-3 indiscernibles for $L_{\bolddelta{3}}[T_3]$. The theory of $L_{\bolddelta{3}}[T_3]$ with these level-3 indiscernibles will be called $0^{3\#}$. We will then show its many-one equivalence with $M_2^{\#}$.

As advertised in the introduction of \cite{sharpI}, the structure of the level-3 indiscernibles for $L_{\bolddelta{3}}[T_3]$ resembles structure of the $\mathbb{L}[T_3]$-homogeneous trees on $\omega \times \bolddelta{3}$ that project to a $\Pi^1_3$ set. Under AD, these trees are defined in \cite{kechris_homo_tree_cabal} and \cite{cabal2_intro_jackson}. In this paper, we will define them again using slightly different notations. 
To a new reader, the combinatorial definitions with homogeneous trees in this paper may seem like  unnecessarily complications of very simple facts. However these definitions will fit in well with the generalized Jackson's analysis in the third paper of this series. We urge the reader to bear with the cumbersome notations and possibly have a couple of simple examples in mind. The price is very low, whereas the effort will pay off in the third paper of this series. 

\section{Backgrounds and preliminaries}
\label{sec:KM}

All the notations of this paper will follow \cite{sharpI}. We introduce additional background knowledge for this paper. 

Suppose $A \subseteq \mathbb{R}$. A norm on $A$ is a function $\varphi : A \to \ord$. $\varphi$ is regular iff $\ran(\varphi)$ is an ordinal. A scale on $A$ is a sequence of norms $\vec{\varphi} = (\varphi_n)_{n<\omega}$ on $A$ such that if $(x_i)_{i < \omega} \subseteq A$, $x_i \to x (i \to \infty)$ in the Baire topology, and for all $n$, $\varphi_n(x_i) \to \lambda_n (i \to \infty)$ in the discrete topology, then $x \in A$ and $\forall n~\varphi_n (x)\leq \lambda_n$. 
$\vec{\varphi}$ is regular iff each $\varphi_n$ is regular. If $A = p[T]$, $T$ is a tree on $\omega \times \lambda$, the $\lambda$-scale associated to $T$ is $(\varphi_n)_{n<\omega}$ where $\varphi_n ( x ) = \corner{\alpha_x^0,\dots,\alpha_x^n}$, $(\alpha_x^n)_{n < \omega}$ is the leftmost branch of $T_x \DEF \set{\vec{\beta}}{ (x, \vec{\beta}) \in [T]}$, $\corner{\dots} : \lambda^{n+1} \to \ord$ is order preserving with respect to the lexicographic order and is onto an ordinal. 
Suppose $\Gamma$ is a pointclass. If $\varphi$ is a norm on $A$, then $\varphi$ is a $\Gamma$-norm iff the relations
\begin{align*}
 x \leq _{\varphi} y  & \eqiv   x \in A \wedge (y \in A \to \varphi(x) \leq \varphi(y)),
\\
 x < _{\varphi} y  & \eqiv   x \in A \wedge (y \in A \to \varphi(x) < \varphi(y)).
\end{align*}
are both in $\Gamma$. $\vec{\varphi} = (\varphi_n)_{n<\omega}$ is a $\Gamma$-scale iff the relations $x \leq_{\varphi_n} y$ and $x <_{\varphi_n } y$ in $(x,y,n)$ are both in $\Gamma$. $\Gamma$ has the prewellordering property iff every set in $\Gamma$ has a $\Gamma$-norm. 
$\Gamma$ has the scale property iff every set in $\Gamma$ has a $\Gamma$-scale. Assuming PD, Moschovakis \cite{mos_dst} shows that the pointclasses $\Pi^1_{2n+1}$, $\boldpi{2n+1}$, $\Sigma^1_{2n+2}$, $\boldsigma{2n+2}$ have the scale property.

For  a nonempty finite tuple $t = (a_0,\ldots,a_k)$, put $t^{-} = (a_0,\ldots,a_{k-1})$. 
This notation will be followed throughout this paper. 
If $<_i$ is a linear ordering on $A_i$ for $i<\omega$, then $<_{BK}^{(<_i)_i} $ is the Brouwer-Kleene order on $\bigcup_{n<\omega} (\Pi_{i < n} A_i)$ where $(a_0,\dots, a_n) <_{BK}^{(<_i)_i} (b_0,\dots,b_m)$ iff either $(a_0,\dots,a_n)$ is a proper lengthening of $(b_0,\dots,b_m)$ or there exists $k\leq\min(m,n)$ such that $\forall i < k ~a_i = b_i \wedge a_k <_k b_k$. In our applications, these orderings $<_i$ will be apparent enough so that $(<_i)_i$ can be omitted from the superscript without confusion. 

Put $\mathbb{L} = \bigcup_{x \in \mathbb{R}}L[x]$, $\mathbb{L}_{\alpha} = \bigcup_{x \in \mathbb{R}}L_{\alpha}[x]$. If $A$ is a set, put $\mathbb{L}[A] = \bigcup_{x \in \mathbb{R}}L[A,x]$, $\mathbb{L}_{\alpha}[A] = \bigcup_{x \in \mathbb{R}}L_{\alpha}[A,x]$.
$\mathbb{L}$ and $\mathbb{L}[A]$ are in general not models of $ZF$. 
Nonetheless, cardinality and cofinality in $\mathbb{L}[A]$ are well defined. So for example,  $\cf^{\mathbb{L}[A]} (\alpha) = \min \set{\cf^{L[A,x]}(\alpha)}{ x \in \mathbb{R}}$. 

If $R$ is a wellfounded relation, $\wocode{x}_R$ denotes the $R$-rank of $x$, i.e., $\wocode{x}_R = \sup \set{\wocode{y}_R+1}{yRx}$. If $<$ is a linear order, then $\pred_{<}(a)$, $\suc_{<}(a)$ denote the $<$-predecessor and $<$-successor of $a$ respectively, if exists. 

\subsection{$Q$-theory}
\label{sec:q-theory}

For $x\in \mathbb{R}$, $\admistwo{x}$ is the minimum admissible set containing $(T_2,x)$. 
We recall the following model theoretic representation of $\Pi^1_3$ sets in \cite{Kechris_Martin_I,Kechris_Martin_II,becker_kechris_1984}. 
\begin{mytheorem}[Becker-Kechris-Martin]\label{thm:BK-KM}
  Assume $\boldDelta{2}$-determinacy. Then for each $A \subseteq u_{\omega} \times \mathbb{R}$, the following are equivalent.
  \begin{enumerate}
  \item $A$ is $\Pi^1_3$.
  \item There is a $\Sigma_1$ formula $\varphi$ such that $(\alpha,x) \in A$ iff $\admistwo{x}\models \varphi(T_2,\alpha,x)$.
  \end{enumerate}
\end{mytheorem}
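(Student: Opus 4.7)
The plan is to treat this as the level-3 analog of the Spector--Gandy theorem, which characterizes $\Pi^1_1$ as $\Sigma_1$ over $L_{\omega_1^x}[x]$. At level 3, the tree $T_2$ plays the role that the base real $x$ played at level 1: since $T_2$ projects to a universal $\Pi^1_2$ set, it encodes all second-level information in a single ordinal object, just as $x$ encodes all of its own arithmetic content. Correspondingly $\kappa_3^x$ is the analogue of $\omega_1^x$ when computed over $(T_2, x)$.

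For the direction (1) $\Rightarrow$ (2), which is the direct one, I would start from $A(\alpha, x) \Leftrightarrow \forall y\, R(\alpha,x,y)$ with $R \in \Sigma^1_2$. Using the universality of $T_2$ for $\Pi^1_2$, rewrite $\neg R(\alpha,x,y)$ as the illfoundedness of a uniformly obtained section $T_2^{(\alpha,x,y)}$, so that $R$ corresponds to wellfoundedness of that section. Then form the combined tree $U_{(\alpha,x)}$ on $\omega \times u_\omega$ whose nodes at level $n$ are pairs $(y\upharpoonright n, \sigma)$ with $\sigma \in T_2^{(\alpha,x,y\upharpoonright n)}$. The tree $U_{(\alpha,x)}$ is built uniformly and $\Sigma_1$-definably from $T_2, \alpha, x$, hence lies in $L_{\kappa_3^x}[T_2,x]$, and $A(\alpha,x)$ holds iff $U_{(\alpha,x)}$ is wellfounded, iff it carries an internal ranking function. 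The target $\Sigma_1$ formula $\varphi$ then simply asserts ``there exists a ranking function for the tree $U$ obtained from $(\alpha, x)$ by this recipe''.

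For (2) $\Rightarrow$ (1), the task is to show that $\Sigma_1$-definability over $L_{\kappa_3^x}[T_2, x]$ collapses into $\Pi^1_3$. Given $\Delta_0$ $\psi$, the statement $L_{\kappa_3^x}[T_2,x] \models \exists v\, \psi(v, T_2, \alpha, x)$ says a witness $v$ exists at some bounded level $\gamma < \kappa_3^x$. To convert this into a $\Pi^1_3$ statement, I would invoke the Kechris--Martin boundedness theorem for $\Pi^1_3$-norms (the level-3 analogue of Kleene $\Pi^1_1$-boundedness): the existence of a real-coded witness at a level below $\kappa_3^x$ is controlled by a $\Pi^1_3$-bounded family of codes, which lets the outer existential real quantifier be absorbed into the $\Pi^1_3$ bound on the set-theoretic side.

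The main obstacle is the (2) $\Rightarrow$ (1) direction, specifically setting up the correct third-level boundedness principle. In Spector--Gandy, Kleene $\Pi^1_1$-boundedness converts ``$\exists y \in \operatorname{HYP}(x)$'' into a single $\Pi^1_1$ statement via Kleene's $O$. At level 3 one needs the analogous fact that every $\Sigma_1$ witness over $L_{\kappa_3^x}[T_2,x]$ is encoded by a real whose $\Pi^1_3$-norm is bounded below $\kappa_3^x$, so that ``$\exists$ witness in the admissible closure over $(T_2, x)$'' becomes a $\Pi^1_3$-bounded statement. Verifying this boundedness --- which requires $\boldDelta{2}$-determinacy to obtain the scale-theoretic inputs and is the substantive content of the Becker--Kechris--Martin analysis --- amounts to identifying $\kappa_3^x$ as exactly the correct level-3 analogue of $\omega_1^x$ over $(T_2, x)$.
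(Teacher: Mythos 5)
First, a point of comparison: the paper does not prove this theorem at all --- it is quoted as a black box from Kechris--Martin and Becker--Kechris, with the remark that the original proof of $2 \Rightarrow 1$ rests on Theorem~\ref{thm:Kechris_Martin_Pi13_subset_of_u_omega_basis} (the basis theorem for $\Pi^1_3$ subsets of $u_\omega$) and Corollary~\ref{coro:Kechris_Martin_closure_under_quantification} (closure of $\Pi^1_3$ under quantification over $u_\omega$). So your sketch has to stand on its own, and while its overall shape (a level-3 Spector--Gandy dichotomy, with the amalgamated-tree argument for $1 \Rightarrow 2$ and a boundedness argument for $2 \Rightarrow 1$) is the right one, there are two genuine gaps.

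The first is the ordinal parameter. The theorem is about $A \subseteq u_\omega \times \mathbb{R}$, where ``$A$ is $\Pi^1_3$'' is defined through real codes $w \in \WO_\omega$ for $\alpha$, whereas the $\Sigma_1$ formula takes $\alpha$ itself as a set-theoretic parameter; moreover a code $w$ for $\alpha$ is typically \emph{not} an element of $L_{\kappa_3^x}[T_2,x]$. Your amalgamated tree $U_{(\alpha,x)}$ must therefore be definable from $(T_2,\alpha,x)$ directly, which requires the uniform tree representation of $\Sigma^1_2(\lessthanshort{u_\omega})$ sets extracted from the Martin--Solovay construction, and one must check the result is independent of the code. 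You never address this, and it is precisely where the Kechris--Martin machinery enters already in the ``easy'' direction. The second gap is that your $2 \Rightarrow 1$ argument defers its entire content to an unnamed ``level-3 boundedness principle.'' The actual mechanism is: (i) every $\Sigma_1$ witness over $L_{\kappa_3^x}[T_2,x]$ lies in a level coded by a $\Delta^1_3(x,\beta)$ subset of $u_\omega$ for some $\beta < u_\omega$ --- this is the basis theorem, Theorem~\ref{thm:Kechris_Martin_Pi13_subset_of_u_omega_basis}, together with the $\Pi^1_3$/$\Sigma^1_3$ coding of $\Delta^1_3(\lessthanshort{u_\omega})$ sets as in Corollary~\ref{coro:Pi13_index_for_Delta13}; and (ii) the resulting existential quantifier is then over $\beta < u_\omega$, not over reals, and $\exists \beta < u_\omega\,\Pi^1_3$ stays $\Pi^1_3$ by Corollary~\ref{coro:Kechris_Martin_closure_under_quantification}. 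Without step (ii) an existential real quantifier in front of a $\Pi^1_3$ matrix only gives $\Sigma^1_4$, so the phrase ``absorb the outer existential real quantifier'' hides exactly the theorem that makes everything work. As written, your proposal correctly locates the difficulty but does not supply the two specific theorems that resolve it.
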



We will need further results on Theorem~\ref{thm:BK-KM}.
The original proof of 2 $\Rightarrow$ 1 in Theorem~\ref{thm:BK-KM} is based on Theorem~\ref{thm:Kechris_Martin_Pi13_subset_of_u_omega_basis} and Corollary~\ref{coro:Kechris_Martin_closure_under_quantification}. 
\begin{mytheorem}[Kechris-Martin,  \cite{Kechris_Martin_I,Kechris_Martin_II}]
  \label{thm:Kechris_Martin_Pi13_subset_of_u_omega_basis}
  Assume $\boldDelta{2}$-determinacy. Let $x \in \mathbb{R}$.  If $A$ is a nonempty $\Pi^1_3(x)$ subset of $u_{\omega}$, then $\exists w \in \Delta^1_3(x)\cap \WO_{\omega} ( \sharpcode{w} \in A)$.
\end{mytheorem}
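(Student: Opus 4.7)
The plan is to reduce the problem to a basis question for $\Pi^1_3$ subsets of Baire space by pulling $A$ back along the coding map $w \mapsto \sharpcode{w}$, and then to extract a $\Delta^1_3(x)$ witness using the $\Pi^1_3$-scale property, which holds under $\boldDelta{2}$-determinacy by Moschovakis's third periodicity theorem. Throughout I would avoid any appeal to Theorem~\ref{thm:BK-KM}, since Kechris-Martin is to be used as an input to the proof of BK-KM rather than the other way around.

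First I would set $A^{*} = \set{w \in \WO_{\omega}}{\sharpcode{w} \in A}$. Because $\WO_{\omega}$ is $\Pi^1_1$ and the relation ``$\sharpcode{w} = \alpha$'' is essentially a $\Pi^1_1$-wellfoundedness check against a $\Delta^1_2$-code for $\alpha$, the set $A^{*}$ is a nonempty $\Pi^1_3(x)$ subset of $\mathbb{R}$. Next I would invoke the $\Pi^1_3$-scale property to fix a regular $\Pi^1_3(x)$-scale $\vec{\varphi} = (\varphi_n)_{n<\omega}$ on $A^{*}$, whose norms I will arrange to take values below $u_{\omega}$. The latter refinement is available because each $w \in \WO_{\omega}$ already packages a countable ordinal below $u_{1}$, so the first few scale norms can be devoted to the order type $\sharpcode{w}$ and its comparison data, after which the remaining norms come from the $\Pi^1_3$ definition of $A$ itself.

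Third, I would select the leftmost branch of the associated tree $T$ on $\omega \times u_{\omega}$: the unique $w^{*} \in A^{*}$ that lexicographically minimizes $(\varphi_n(w^{*}))_{n<\omega}$. Convergence of the scale guarantees this infimum is attained in $A^{*}$, and then $\sharpcode{w^{*}} \in A$. The output of the theorem is the real $w^{*}$ itself.

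The main obstacle will be the final verification that $w^{*}$ is $\Delta^1_3(x)$ rather than merely $\Delta^1_4(x)$, which is all that the generic basis theorem for $\Sigma^1_3$ yields. The improvement rests on the bound $\varphi_n(w) < u_{\omega}$: the leftmost-branch construction can then be phrased as a transfinite recursion of length $< u_{\omega}$ in which every successor stage is expressible by a $\Pi^1_3(x)$ condition on a countable ordinal code. A standard reflection argument, combined with the $\Pi^1_3$ prewellordering property, shows that the resulting $w^{*}$ is definable by a single $\Delta^1_3(x)$ formula. Managing the bookkeeping so that the codes for the intermediate stages genuinely stay within $\Delta^1_3(x)$ — and in particular do not require quantification over an unbounded initial segment of $\delta^1_3$ — is the delicate technical point that will require most of the effort.
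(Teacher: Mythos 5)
There is a genuine gap, and it sits exactly where the theorem's content lies. First, a small clarification: the paper does not prove this statement; it is imported from Kechris--Martin, so I am assessing your argument on its own terms. Your plan is to pull $A$ back to $A^{*}=\set{w\in \WO_{\omega}}{\sharpcode{w}\in A}$, put a $\Pi^1_3(x)$-scale on it, and take the leftmost branch. Two steps do not survive scrutiny. (i) Third periodicity gives a $\Pi^1_3(x)$-scale whose norms have length up to $\bolddelta{3}$, not $u_{\omega}$; your claim that the norms ``can be arranged to take values below $u_{\omega}$'' is unsupported, and producing a definable tree on $\omega\times u_{\omega}$ projecting to $A^{*}$ is essentially the Martin--Solovay analysis, not something the scale property hands you. (ii) Even granting a tree, the leftmost branch of a $\Pi^1_3$-scale tree is only $\Delta^1_4(x)$, as you concede. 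The proposed repair --- a transfinite recursion on $\Pi^1_3(x)$ conditions about countable ordinal codes, resolved by ``a standard reflection argument'' and the prewellordering property --- is precisely the point at which the Kechris--Martin machinery is needed: deciding such conditions in a $\Delta^1_3(x)$ way, and quantifying over ordinals $<u_{\omega}$ without leaving $\Pi^1_3$, are Theorem~\ref{thm:Kechris_Martin_Pi13_subset_of_u_omega_basis} and Corollary~\ref{coro:Kechris_Martin_closure_under_quantification} themselves. As written the argument is circular.

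There is also a structural reason the scale/leftmost-branch route cannot succeed without using more than you use: the set $\set{y}{y\notin \Delta^1_3(x)}$ is a nonempty $\Sigma^1_3(x)$ (indeed the relevant pointclass computations are in Section~\ref{sec:q-theory}) set of reals with no $\Delta^1_3(x)$ member, so no basis theorem of the form ``every nonempty $\Pi^1_3(x)$ or $\Sigma^1_3(x)$ pointset has a $\Delta^1_3(x)$ element'' is true. What saves the ordinal case is that $A^{*}$ is invariant under the equivalence ``$w,w'$ code the same ordinal,'' and any correct proof must exploit that invariance essentially; your sketch never does. The actual Kechris--Martin argument proceeds by induction on $n$ for $\Pi^1_3$ subsets of $u_n$, representing ordinals below $u_{n+1}$ as terms $\tau^{L[z]}(u_1,\dots,u_n)$ and running games in which players produce putative sharps, with a $\boldsigma{2}$-boundedness argument (much like the one in the proof of Theorem~\ref{thm:strong_partition_omega_1} here) locating a $\Delta^1_3(x)$ witness. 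That mechanism is absent from your proposal, so the proof as it stands does not establish the theorem.
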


\begin{mycorollary}[Kechris-Martin,  \cite{Kechris_Martin_I,Kechris_Martin_II}]
  \label{coro:Kechris_Martin_closure_under_quantification}
  Assume $\boldDelta{2}$-determinacy.  Then $\Pi^1_3$ is closed under quantifications over $u_{\omega}$, i.e., if $A \subseteq (u_{\omega})^2 \times \mathbb{R}$ is $\Pi^1_3$, then so are
  \begin{align*}
    B & = \set{(\alpha,x)}{\exists \beta<u_{\omega} ~ (\beta,\alpha,x) \in A}, \\
    C & = \set{(\alpha,x)}{\forall \beta <u_{\omega}~ (\beta,\alpha,x) \in A}.
  \end{align*}
\end{mycorollary}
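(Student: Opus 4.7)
The corollary splits into two assertions ($B$ and $C$), dual under complementation. I treat them in turn: the universal case is elementary, while the existential case is the substantive one and requires Theorem~\ref{thm:Kechris_Martin_Pi13_subset_of_u_omega_basis}.

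For $C$: Express $\forall \beta < u_\omega ~ (\beta, \alpha, x) \in A$ as $\forall w (w \in \WO_\omega \to (\sharpcode{w}, \alpha, x) \in A)$. The set $\WO_\omega$ of reals coding ordinals below $u_\omega$ is $\Pi^1_2$ (combining the $\Pi^1_1$ nature of wellfoundedness with a $\Sigma^1_2$ bound through iterated sharps), and the substitution $\beta \mapsto \sharpcode{w}$ preserves the $\Pi^1_3$-complexity of $A$. The body of the implication is thus $\Pi^1_3$, and $\forall^{\mathbb{R}}$-closure of $\Pi^1_3$ yields $C \in \Pi^1_3$.

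For $B$: The naive translation $\exists w \in \WO_\omega ~ (\sharpcode{w}, \alpha, x) \in A$ is only $\Sigma^1_3$. I would apply Theorem~\ref{thm:Kechris_Martin_Pi13_subset_of_u_omega_basis} to the $\Pi^1_3(\alpha, x)$ section $A_{\alpha, x} \subseteq u_\omega$: it is nonempty iff it contains an element coded by some $w \in \Delta^1_3(\alpha, x) \cap \WO_\omega$. From (the proof of) Theorem~\ref{thm:Kechris_Martin_Pi13_subset_of_u_omega_basis} I would extract its uniform content: a $\Pi^1_3$-definable canonical selector $G(w, \alpha, x)$ that, for each $(\alpha, x)$ with $A_{\alpha, x} \neq \emptyset$, uniquely picks out a $\Delta^1_3(\alpha, x)$-coded witness $w_0$ lying in $\WO_\omega$ with $\sharpcode{w_0} \in A_{\alpha, x}$, and has no solution otherwise. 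Then $(\alpha, x) \in B$ is equivalent to the assertion that the canonical $w_0$ exists and satisfies these properties. The key to keeping this $\Pi^1_3$ (rather than $\Sigma^1_4$ as a naive $\exists w$ over a $\Pi^1_3$-graph) is that the selector is defined as the $\phi$-least element of the relevant $\Pi^1_3$ set for a $\Pi^1_3$-norm $\phi$, available under $\boldDelta{2}$-determinacy via the $\Pi^1_3$-scale property; with this, "the $\phi$-least element is defined" can be expressed without an unbounded real existential, and closure under $\exists^{\omega}$ handles the quantification over indices.

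The main obstacle is producing the canonical $\Pi^1_3$-selector $G$ and verifying that its projection onto $(\alpha, x)$ remains $\Pi^1_3$. This requires combining the Kechris-Martin construction (which furnishes the $\Delta^1_3$-coded witness) with a Moschovakis-style scale-based uniformization (which gives a $\Pi^1_3$-definable selection mechanism), and careful bookkeeping of complexities, exploiting that $\Pi^1_3$ is closed under $\forall^{\mathbb{R}}$, $\exists^{\omega}$, and conjunction, and that antecedents in $\Sigma^1_3$ within $\Pi^1_3$-implications reduce to $\Pi^1_3$-bodies.
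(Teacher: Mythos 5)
The paper gives no proof of this corollary; it is cited to Kechris--Martin, so your attempt must be measured against the standard argument. Your decomposition matches it: the universal case is elementary ($\forall \beta < u_\omega$ unwinds to $\forall w\,(w \in \WO_\omega \to (\sharpcode{w},\alpha,x) \in A)$, a real universal quantifier applied to a $\Pi^1_3$ matrix, using that $\WO_\omega$ is $\Pi^1_2$ and that every ordinal below $u_\omega$ has a sharp code), and the existential case is where Theorem~\ref{thm:Kechris_Martin_Pi13_subset_of_u_omega_basis} must enter. That much is right.

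In the existential case, however, one step as written is false. A $\Pi^1_3$-norm $\varphi$ on a $\Pi^1_3$ set $P$ lets you say of a \emph{given} $w$ that it is $\varphi$-minimal in $P$ (namely $\forall v\, (w \leq^*_\varphi v)$, which is $\Pi^1_3$); it does \emph{not} let you say ``the $\varphi$-least element exists,'' i.e.\ $P \neq \emptyset$, without an unbounded real existential --- no norm removes that quantifier. What actually eliminates the real existential is precisely the basis theorem combined with an integer enumeration of witnesses: $A_{\alpha,x} \neq \emptyset$ iff it has a member with a sharp code in $\Delta^1_3(\alpha,x) \cap \WO_\omega$, and the $\Delta^1_3(\alpha,x)$ reals admit a partial enumeration $n \mapsto w_n^{\alpha,x}$ whose graph and domain are $\Pi^1_3$ (this is where the prewellordering property, Theorem~\ref{thm:pwo_subsets_of_u_omega_times_R}, and the $\Pi^1_3$-ness of ``$y \in \Delta^1_3(x)$'' are used). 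Then $(\alpha,x) \in B$ iff $\exists n\,[\,w_n^{\alpha,x} \text{ is defined} \wedge w_n^{\alpha,x} \in \WO_\omega \wedge (\sharpcode{w_n^{\alpha,x}},\alpha,x) \in A\,]$, which is $\exists^\omega$ applied to a $\Pi^1_3$ matrix, hence $\Pi^1_3$. Your closing clause (``closure under $\exists^\omega$ handles the quantification over indices'') gestures at exactly this, so the argument is salvageable; but you should discard the ``canonical $\varphi$-least selector'' as the mechanism for controlling complexity --- it is unnecessary, and the claim that it expresses existence without a real quantifier is wrong. The point you should make explicit instead is the uniformity of Theorem~\ref{thm:Kechris_Martin_Pi13_subset_of_u_omega_basis} in the parameters, including the ordinal parameter $\alpha$ (handled via invariance under the choice of sharp code for $\alpha$), which is what the Kechris--Martin proof supplies and what makes the integer search uniform in $(\alpha,x)$.
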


Suppose $\mathcal{X}$ is a Polish space. For $x \in \mathbb{R}$ and $\alpha<u_{\omega}$, $A \subseteq \mathcal{X}$ is $\Sigma^1_3(x,\alpha)$ iff there is a $\Sigma^1_3(x)$ set $B \subseteq u_{\omega} \times \mathcal{X}$ such that $y \in A$ iff $(\alpha,y) \in B$. Or equivalently, $A$ is $\Sigma^1_3(x,\alpha)$ iff there is a $\Sigma^1_3(x)$ set 
$B \subseteq \mathbb{R} \times \mathcal{X} $ such that $y \in A$ iff  $\exists w \in \WO_{\omega}~(\sharpcode{w} = \alpha \wedge (w,\alpha) \in B)$. $A$ is $\Pi^1_3(x,\alpha)$ iff $\mathcal{X}\setminus A$ is $\Sigma^1_3(x,\alpha)$. $A$ is $\Delta^1_3(x,\alpha)$ iff $A$ is both $\Sigma^1_3(x,\alpha)$ and $\Pi^1_3(x,\alpha)$. $\Sigma^1_3(x,\lessthanshort{\beta})$ means $\Sigma^1_3(x,\alpha)$ for some $\alpha<\beta$. Similarly define $\Pi^1_3(x,\lessthanshort{\beta})$ and $\Delta^1_3(x,\lessthanshort{\beta})$.

In the proof of Theorem~\ref{thm:BK-KM}, the prewellordering property for $\Pi^1_3$ subsets of $\omega \times u_{\omega}$, originally proved by Solovay,  is used. 

\begin{mytheorem}[Solovay, {\cite[{Theorem 3.1}]{kechris_1978_II}}]
  \label{thm:pwo_subsets_of_u_omega_times_R}
Assume $\boldDelta{2}$-determinacy. Suppose $A \subseteq u_{\omega} \times \mathbb{R}$ is $\Pi^1_3(x,\alpha)$, where $x \in \mathbb{R}$, $\alpha<u_{\omega}$. Then there is a $\Pi^1_3(x,\alpha)$ norm $\varphi: A \to \ord$, i.e., the relations
\begin{align*}
  (\beta,y) \leq^{*}_{\varphi} (\gamma,z) & \eqiv (\beta,y) \in A \wedge ((\gamma,z) \in A \to \varphi(\beta,y) \leq \varphi(\gamma,z))\\
  (\beta,y) <^{*}_{\varphi} (\gamma,z) & \eqiv (\beta,y) \in A \wedge ((\gamma,z) \in A \to \varphi(\beta,y) < \varphi(\gamma,z))
\end{align*}
are $\Pi^1_3(x,\alpha)$.
\end{mytheorem}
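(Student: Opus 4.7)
The plan is to invoke Theorem~\ref{thm:BK-KM} twice: first to obtain a canonical $\Sigma_1$-over-admissible description of $A$, and second to convert the $\Sigma_1$-over-admissible description of the resulting comparison relations into $\Pi^1_3$ complexity.

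First, by Theorem~\ref{thm:BK-KM}, absorbing $(x, \alpha)$ as parameters, I fix a $\Sigma_1$ formula $\varphi$ such that $(\beta, y) \in A$ if and only if $\admistwo{(x,y)} \models \varphi(T_2, x, \alpha, \beta, y)$, and define the natural witness-stage norm
\[
\psi(\beta, y) \defeq \text{the least } \mu < \kappa_3^{(x,y)} \text{ with } L_\mu[T_2, x, y] \models \varphi(T_2, x, \alpha, \beta, y),
\]
which is well-defined on $A$ by $\Sigma_1$-reflection in the admissible set.

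Second, I establish the key equivalence
\[
(\beta, y) \leq^{*}_\psi (\gamma, z) \Leftrightarrow \exists \mu \bigl[\, L_\mu[T_2, x, y] \models \varphi \; \wedge \; \forall \mu' < \mu \ L_{\mu'}[T_2, x, z] \not\models \varphi \,\bigr],
\]
where the two instances of $\varphi$ are evaluated at $(\beta, y)$ and $(\gamma, z)$ respectively. Taking $\mu = \psi(\beta, y)$ gives the forward direction; conversely, the existential clause forces $\psi(\beta, y) \leq \mu$, and the universal clause forces either $(\gamma, z) \notin A$ or $\psi(\gamma, z) \geq \mu$. The right-hand side is $\Sigma_1$ over the joint minimal admissible $\admistwo{(x,y,z)}$: the bounded satisfaction predicate $L_\mu[T_2, \cdot] \models \varphi$ is $\Delta_1$ in $\mu$ (using that the $L$-hierarchy is $\Delta_1$-definable and $\Delta_1$ is closed under bounded quantifiers in KP), so the only unbounded quantifier is the leading $\exists \mu$. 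Both witness stages lie below $\kappa_3^{(x,y,z)}$, since enlarging the real parameter can only raise the least-admissibility threshold, so the witness $\mu$ is found inside the joint admissible. A parallel argument with $\forall \mu' \leq \mu$ in place of $\forall \mu' < \mu$ handles the strict comparison $<^{*}_\psi$.

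Third, Theorem~\ref{thm:BK-KM} applied in the reverse direction, with $(y, z)$ coded as a single real and $(\beta, \gamma)$ coded as a single ordinal below $u_\omega$ via pairing, converts these $\Sigma_1$-over-admissible descriptions into $\Pi^1_3(x, \alpha)$ definitions of $\leq^{*}_\psi$ and $<^{*}_\psi$, completing the proof. The main obstacle I foresee is checking that the bounded satisfaction predicate remains $\Delta_1$ in $\mu$, so the compound formula stays at the $\Sigma_1$ level over the admissible set; this rests on $\Sigma_1$-replacement and the standard $\Delta_1$-definability of the $L[T_2, \cdot]$-hierarchy in KP. The remaining steps are formal bookkeeping of parameters and of the $u_\omega$-coding.
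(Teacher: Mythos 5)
Your argument hinges on the ``key equivalence'' in the second step, and that equivalence is false in general. The point is that $\psi(\beta,y)$ and $\psi(\gamma,z)$ are least witness stages computed in two \emph{different} hierarchies, cut off at two \emph{different} admissible ordinals $\kappa_3^{(x,y)}$ and $\kappa_3^{(x,z)}$, and Theorem~\ref{thm:BK-KM} ties membership in $A$ only to $\Sigma_1$ truth at the \emph{least} admissible level. A $\Sigma_1$ formula that is false in $\admistwo{(x,z)}$ can perfectly well become true in $L_{\mu'}[T_2,x,z]$ for some $\mu' \geq \kappa_3^{(x,z)}$ ($\Sigma_1$ persistence only goes upward). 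So take $(\beta,y) \in A$, $(\gamma,z) \notin A$, with $\psi(\beta,y) > \kappa_3^{(x,z)}$: the left side $(\beta,y)\leq^{*}_{\psi}(\gamma,z)$ holds, but your right side can fail, because some $\mu' \in [\kappa_3^{(x,z)}, \psi(\beta,y))$ may satisfy $L_{\mu'}[T_2,x,z]\models\varphi(\gamma,z)$ even though $(\gamma,z)\notin A$; and $\mu = \psi(\beta,y)$ is the smallest admissible choice of witness, so no other choice of $\mu$ repairs this. Symmetrically, in the backward direction a witness $\mu \in [\kappa_3^{(x,y)}, \kappa_3^{(x,y,z)})$ with $L_{\mu}[T_2,x,y]\models\varphi(\beta,y)$ does not certify $(\beta,y)\in A$, so the right side can hold while the left side fails. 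Your remark that ``the witness $\mu$ is found inside the joint admissible'' addresses existence of the intended witness but not the existence of these spurious ones; the already-classical analogue at level one (PWO for $\Pi^1_1$ via least witness stages over $L_{\omega_1^x}[x]$) has exactly the same defect unless one verifies that the $\Sigma_1$ formula is upward absolute to $V$ and reflects back down to the least admissible by a boundedness argument. Theorem~\ref{thm:BK-KM} as stated gives you no such absoluteness for $\varphi$, so this is a genuine missing ingredient, not bookkeeping.

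There is also a structural problem: the paper points out that Solovay's prewellordering theorem is itself an ingredient in the proof of Theorem~\ref{thm:BK-KM} (the direction $2\Rightarrow 1$ rests on Theorem~\ref{thm:Kechris_Martin_Pi13_subset_of_u_omega_basis} and Corollary~\ref{coro:Kechris_Martin_closure_under_quantification}, whose proofs use the prewellordering property for $\Pi^1_3$ subsets of $\omega\times u_{\omega}$), so deriving the present theorem from Theorem~\ref{thm:BK-KM} is circular at the level of the underlying development. The intended route is different: under $\boldDelta{2}$-determinacy, second periodicity gives a $\Pi^1_3(x)$ norm $\varphi^{*}$ on the real-coded set $A^{*}=\set{(w,y)}{w\in\WO_{\omega}\wedge(\sharpcode{w},y)\in A}$, one sets $\varphi(\beta,y)=\inf\set{\varphi^{*}(w,y)}{\sharpcode{w}=\beta}$, and one then verifies that $\leq^{*}_{\varphi}$ and $<^{*}_{\varphi}$ remain $\Pi^1_3$ using the Kechris--Martin basis theorem and closure of $\Pi^1_3$ under quantification over $u_{\omega}$ to absorb the quantifiers over sharp codes. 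If you want to keep your witness-stage approach, you must additionally prove that the $\Sigma_1$ formula can be chosen so that $L_{\mu}[T_2,x,y]\models\varphi(\beta,y)$ for \emph{any} $\mu$ already implies $(\beta,y)\in A$; that is a substantive claim requiring its own argument.
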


We uses the above theorems to establish a $\Sigma^1_3$-boundedness theorem with parameters in $ u_{\omega}$.
\begin{mycorollary}[Reduction]\label{coro:Pi13_reduction}
  Assume $\boldDelta{2}$-determinacy. Suppose $A, B \subseteq u_{\omega} \times \mathbb{R}$ are both $\Pi^1_3(x,\alpha)$, where $x \in \mathbb{R}$, $\alpha<u_{\omega}$. Then there exist $\Pi^1_3(x,\alpha)$ sets $A',B'\subseteq u_{\omega} \times \mathbb{R}$ such that $A'\subseteq A$, $B'\subseteq B$,  $A \cup B = A' \cup B'$ and $A' \cap B' = \emptyset$.
\end{mycorollary}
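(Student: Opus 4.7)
The plan is to derive the reduction property from the prewellordering property of Theorem~\ref{thm:pwo_subsets_of_u_omega_times_R} by the classical construction: combine $A$ and $B$ into a single $\Pi^1_3(x,\alpha)$ set on $u_\omega \times \mathbb{R}$, extract one $\Pi^1_3(x,\alpha)$-norm on it, and then separate the two sides by comparing norm values on matched pairs of ``tagged copies'' of each point.

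Concretely, I would first define $C \subseteq u_\omega \times \mathbb{R}$ by stipulating that $(\beta,y) \in C$ holds just in case either $y(0)=0$ and $(\beta,y^{-}) \in A$, or $y(0)=1$ and $(\beta,y^{-}) \in B$, where $y^{-}(n) = y(n+1)$. Since $y \mapsto y^{-}$ is recursive and $y(0)=i$ is clopen, $C$ is $\Pi^1_3(x,\alpha)$. Theorem~\ref{thm:pwo_subsets_of_u_omega_times_R} supplies a $\Pi^1_3(x,\alpha)$-norm $\varphi$ on $C$. Writing $i \concat z$ (for $i \in \{0,1\}$) for the real whose first entry is $i$ and whose tail is $z$, I would set
\begin{align*}
A' &= \{(\beta,z) \in A : (\beta, 0\concat z) \leq^{*}_{\varphi} (\beta, 1\concat z)\},\\
B' &= \{(\beta,z) \in B : (\beta, 1\concat z) <^{*}_{\varphi} (\beta, 0\concat z)\}.
\end{align*}
Both $A'$ and $B'$ are $\Pi^1_3(x,\alpha)$ as intersections of $\Pi^1_3(x,\alpha)$ conditions (membership in $A$ resp.\ $B$, together with the $\leq^{*}_{\varphi}$ or $<^{*}_{\varphi}$ relations).

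For the verification: if $(\beta,z) \in A \setminus B$, then $(\beta,0\concat z) \in C$ while $(\beta,1\concat z) \notin C$, so $(\beta,0\concat z) \leq^{*}_{\varphi} (\beta,1\concat z)$ holds vacuously and $(\beta,z) \in A'$; symmetrically, $(\beta,z) \in B \setminus A$ forces $(\beta,z) \in B'$. If $(\beta,z) \in A \cap B$, both tagged points are in $C$, and exactly one of $\varphi(\beta,0\concat z) \leq \varphi(\beta,1\concat z)$ or $\varphi(\beta,1\concat z) < \varphi(\beta,0\concat z)$ holds, placing $(\beta,z)$ in exactly one of $A'$ or $B'$. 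This yields $A \cup B = A' \cup B'$ and $A' \cap B' = \emptyset$. The only real subtlety is bookkeeping: Theorem~\ref{thm:pwo_subsets_of_u_omega_times_R} is stated for subsets of $u_\omega \times \mathbb{R}$, so the disjoint union of $A$ and $B$ must be re-expressed as a single subset of that product, which the prepend-a-bit coding accomplishes; beyond that, the argument is the textbook derivation of reduction from prewellordering and needs nothing past the hypothesis of $\boldDelta{2}$-determinacy already invoked by Theorem~\ref{thm:pwo_subsets_of_u_omega_times_R}.
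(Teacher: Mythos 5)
Your proof is correct and is exactly the standard derivation of reduction from the prewellordering property (here supplied by Theorem~\ref{thm:pwo_subsets_of_u_omega_times_R}), which is what the paper implicitly relies on in stating this as a corollary without proof. The tagging of the real coordinate, the definitions of $A'$ and $B'$ via $\leq^{*}_{\varphi}$ and $<^{*}_{\varphi}$, and the case analysis are all as intended.
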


\begin{mycorollary}[Easy uniformization]
  \label{coro:uniformization_Pi13_u_omega}
Assume $\boldDelta{2}$-determinacy. Suppose $A \subseteq (u_{\omega}\times \mathbb{R}) \times u_{\omega} $ is $\Pi^1_3(x,\alpha)$, where $x \in \mathbb{R}$, $\alpha < u_{\omega}$. Then $A$ can be uniformized by a $\Pi^1_3(x,\alpha)$ function, i.e.,  there is a $\Pi^1_3(x,\alpha)$ function $f$ such that $ \dom(f) = \{(\beta,y) : \exists \gamma~  (  (\beta,y),\gamma) \in A\}$ and that $((\beta,y), f(\beta,y)) \in A$ for all $(\beta,y) \in \dom(f)$. 
\end{mycorollary}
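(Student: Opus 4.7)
\emph{Proof plan.} The plan is to imitate the standard Moschovakis norm-uniformization recipe inside the pointclass $\Pi^1_3(x,\alpha)$, exploiting the closure of this class under universal $u_{\omega}$-quantifiers (Corollary~\ref{coro:Kechris_Martin_closure_under_quantification}) in place of the closure under integer quantifiers that drives the classical $\Pi^1_1$ analog.

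First, using a definable pairing $u_{\omega}\times u_{\omega}\to u_{\omega}$, I view $A$ as a $\Pi^1_3(x,\alpha)$ subset of $u_{\omega}\times \mathbb{R}$ and apply Theorem~\ref{thm:pwo_subsets_of_u_omega_times_R} to produce a $\Pi^1_3(x,\alpha)$ norm $\varphi\colon A\to \Ord$. Write $\leq^{*}_{\varphi}$, $<^{*}_{\varphi}$ for the associated $\Pi^1_3(x,\alpha)$ prewellordering relations on $(u_{\omega}\times \mathbb{R})\times u_{\omega}$.

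Next, define $f$ by ``minimize along $\varphi$, then take the ordinal-least minimizer''; explicitly, set $f(\beta,y)=\gamma$ to mean
\begin{enumerate}
\item $((\beta,y),\gamma)\in A$;
\item $\forall \gamma'\in u_{\omega}~((\beta,y),\gamma)\leq^{*}_{\varphi}((\beta,y),\gamma')$;
\item $\forall \gamma'<\gamma~((\beta,y),\gamma)<^{*}_{\varphi}((\beta,y),\gamma')$.
\end{enumerate}
Clause (1) is $\Pi^1_3(x,\alpha)$ by hypothesis, and clauses (2) and (3) are $\Pi^1_3(x,\alpha)$ by Corollary~\ref{coro:Kechris_Martin_closure_under_quantification}; the bound $\gamma'<\gamma$ in (3) is an auxiliary arithmetic-in-codes restriction on the quantifier and does not spoil the complexity.

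To verify that this really defines a uniformizing function, note that whenever the vertical section $A_{(\beta,y)}=\{\gamma:((\beta,y),\gamma)\in A\}$ is nonempty, the ordinal-valued norm $\varphi$ attains a minimum on it; the set of $\varphi$-minimizers is a nonempty subset of the ordinal $u_{\omega}$, and its least element is the unique $\gamma$ satisfying (1)--(3). Hence $\dom(f)$ coincides with the projection of $A$ onto $u_{\omega}\times \mathbb{R}$ and $f$ selects a unique witness at each point. The only genuine content of the argument is the closure of $\Pi^1_3(x,\alpha)$ under universal $u_{\omega}$-quantifiers, which is exactly what Corollary~\ref{coro:Kechris_Martin_closure_under_quantification} supplies; I do not anticipate any further obstacle.
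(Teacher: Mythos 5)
Your argument is correct and is exactly the intended derivation: the paper leaves this corollary unproved because it is the standard ``easy uniformization'' consequence of the Solovay norm (Theorem~\ref{thm:pwo_subsets_of_u_omega_times_R}) together with closure of $\Pi^1_3(x,\alpha)$ under $\forall^{u_\omega}$ (Corollary~\ref{coro:Kechris_Martin_closure_under_quantification}), with ties among norm-minimizers broken by taking the least ordinal witness. No issues.
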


 The $\Pi^1_3$ coding system for $\Delta^1_3$ sets (e.g.,  \cite[Theorem 3.3.1]{HKL_1990}) applies to the larger pointclass $\Delta^1_3(\lessthanshort{u_{\omega}})$. The proof is similar.
\begin{mycorollary}[$\Pi^1_3$-codes for $\Delta^1_3(\lessthanshort{u_{\omega}})$]
  \label{coro:Pi13_index_for_Delta13}
  Assume $\boldDelta{2}$-determinacy.  Then there is a $\Pi^1_3$ set $C \subseteq u_{\omega}$ and sets $P,S \subseteq u_{\omega} \times \mathbb{R}$ in $\Pi^1_3, \Sigma^1_3$ respectively such that for any $\alpha \in C$, 
  \begin{displaymath}
    P_{\alpha} = S_{\alpha} \DEF D_{\alpha}
  \end{displaymath}
and
\begin{displaymath}
\set{ D_{\alpha}}  {\alpha \in C}  = \set{A  \subseteq \mathbb{R}}{ A \text{ is } \Delta^1_3(\lessthanshort{u_{\omega}})}.
\end{displaymath}
\end{mycorollary}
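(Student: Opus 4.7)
The plan is to adapt the classical $\Pi^1_3$-coding system for $\Delta^1_3$ sets (as in \cite[Theorem~3.3.1]{HKL_1990}), using Corollary~\ref{coro:Pi13_reduction} to absorb the ordinal parameters below $u_{\omega}$.

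First I would fix a universal $\Pi^1_3$ set $H \subseteq u_{\omega}\times\mathbb{R}$ for $\Pi^1_3(\lessthanshort{u_{\omega}})$ subsets of $\mathbb{R}$, meaning that every such set arises as a section $H_\alpha$. This is built in a routine way from a universal $\Pi^1_3$ subset of $u_{\omega}\times\mathbb{R}$ together with a $\Delta^1_3$ pairing $\langle\cdot,\cdot\rangle : u_{\omega}\times u_{\omega}\to u_{\omega}$, using the $\WO_{\omega}$-reformulation of $\Pi^1_3(x,\alpha)$-definability recalled above to compress the real and ordinal parameters of a $\Pi^1_3(\lessthanshort{u_{\omega}})$ set into a single ordinal in $u_{\omega}$.

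Next, fix a $\Delta^1_3$ bijection $\alpha\leftrightarrow (\alpha_1,\alpha_2)$ on $u_{\omega}$ and define the $\Pi^1_3$ sets $A, B\subseteq u_{\omega}\times\mathbb{R}$ by $A(\alpha,y)\eqiv H(\alpha_1,y)$ and $B(\alpha,y)\eqiv H(\alpha_2,y)$. Apply the reduction principle Corollary~\ref{coro:Pi13_reduction} to the pair $(A,B)$ to obtain $\Pi^1_3$ sets $A'\subseteq A$ and $B'\subseteq B$ with $A'\cap B' = \emptyset$ and $A'\cup B' = A\cup B$, and set
\begin{displaymath}
  C = \{\alpha<u_{\omega} : H_{\alpha_1}\cup H_{\alpha_2} = \mathbb{R}\}, \quad P = A', \quad S = (u_{\omega}\times\mathbb{R})\setminus B'.
\end{displaymath}
Since $\Pi^1_3$ is closed under $\forall^{\mathbb{R}}$, $C$ is $\Pi^1_3$; by construction $P$ is $\Pi^1_3$ and $S$ is $\Sigma^1_3$.

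For the verification: if $\alpha\in C$ then $A'_\alpha\cup B'_\alpha = A_\alpha \cup B_\alpha = \mathbb{R}$ and $A'_\alpha\cap B'_\alpha = \emptyset$, giving $P_\alpha = A'_\alpha = \mathbb{R}\setminus B'_\alpha = S_\alpha$. Conversely, given any $D\in\Delta^1_3(\lessthanshort{u_{\omega}})$, choose $\alpha_1,\alpha_2<u_{\omega}$ with $H_{\alpha_1} = D$ and $H_{\alpha_2} = \mathbb{R}\setminus D$ and set $\alpha = \langle\alpha_1,\alpha_2\rangle$; then $\alpha\in C$, and the reduction inclusions $A'_\alpha\subseteq D$ and $B'_\alpha\subseteq \mathbb{R}\setminus D$ together with $A'_\alpha\cup B'_\alpha = \mathbb{R}$ force $D_\alpha = D$. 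The only step that is not a verbatim transcription of the classical argument is the construction of $H$, which is where the $Q$-theoretic absorption of the ordinal parameters takes place; this is the only place where a bit of care is needed, and everything else is mechanical once $H$ is available.
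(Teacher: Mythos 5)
Your proposal is correct and follows essentially the same route as the paper: both build a universal $\Pi^1_3$ family of $\Pi^1_3(\lessthanshort{u_{\omega}})$ sets by replacing the real code $w\in\WO_{\omega}$ of an ordinal parameter with the ordinal itself, pair up two indices, apply the reduction principle (Corollary~\ref{coro:Pi13_reduction}), and take $C$, $P$, $S$ exactly as you do (the paper keeps the integer index $n$ and the ordinal $\alpha$ separate and only identifies $(\omega\times u_{\omega})^2$ with $u_{\omega}$ at the end, which is a purely cosmetic difference). The one step you defer --- the construction of $H$ --- is spelled out in the paper as $(n,\alpha)\mapsto\set{x}{\forall w\in\WO_{\omega}\,(\sharpcode{w}=\alpha\to(n,w,x)\in U)}$ for a good universal $\Pi^1_3$ set $U$, and is indeed routine Kechris--Martin bookkeeping.
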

\begin{proof}
Let $U\subseteq \omega \times \mathbb{R}^2$ be a good universal $\Pi^1_3$ set. 
  Define 
  \begin{align*}
    ( (n,\alpha),(m,\beta),x) \in A & \eqiv  \forall w \in \WO_{\omega}~( \sharpcode{w} = \alpha \to (n,w,x) \in U) \\
    ( (n,\alpha),(m,\beta),x) \in B & \eqiv  \forall w \in \WO_{\omega}~( \sharpcode{w} = \beta \to (m,w,x) \in U) 
  \end{align*}
Then $A$, $B$ are $\Pi^1_3$ subsets of $(\omega \times u_{\omega})^2$. Reduce them to $A'$, $B'$ according to Corollary~\ref{coro:Pi13_reduction}. Define
\begin{displaymath}
  ((n,\alpha) ,(m,\beta))\in C \eqiv (A')_{(n,\alpha) ,(m,\beta)} \cup  (B')_{(n,\alpha) ,(m,\beta)} = \mathbb{R}
\end{displaymath}
$C$ is a $\Pi^1_3$ subset of $(\omega \times u_{\omega})^2$. Let $P = A'$, $S = (\omega \times u_{\omega})^2 \times \mathbb{R}\setminus B'$. Identifying $(\omega \times u_{\omega})^2$ with $u_{\omega}$ with the G\"{o}del pairing function, $C,P,S$ are as desired.
\end{proof}

Theorem~\ref{thm:BK-KM} provides a model-theoretic view of $Q$-theory \cite{Q_theory} at the level of $Q_3$-degrees. We give an exposition of these results, probably with simple strengthenings thereof. 

The higher level analog of the hyperarithmetic reducibility on reals is $Q_3$ reducibility. $Q_3$-degrees are coarser than $\Delta^1_3$-degrees.
$y \in Q_3(x) $ iff  $y$ is $\Delta^1_3(x)$ in a countable ordinal, i.e.,  there is  $\alpha<\omega_1$ such that  $ \forall w \in \WO (\sharpcode{w}  = \alpha \to y \in \Delta^1_3(x)$. $y$ is $\Delta^1_3(x)$ in an ordinal $<{u_{\omega}}$ iff there is $\alpha < u_{\omega}$ such that $ \forall w \in \WO_{\omega} (\sharpcode{w}  = \alpha \to y \in \Delta^1_3(x))$. $y \leq_{\Delta^1_3} x$ iff $y \in \Delta^1_3(x)$. $y \equiv_{\Delta^1_3} x$ iff $y \leq _{\Delta^1_3} x \leq _{\Delta^1_3} y$. $y \leq_{Q_3}x$ iff $y \in Q_3(x)$. $y \equiv_{Q_3} x$ iff $y \leq_{Q_3} x \leq _{Q_3} y$. 

\begin{myproposition}[\cite{Q_theory,kechris_analytical_I,Kechris_Martin_I,Kechris_Martin_II,steel_projective_wo_1995}]
  \begin{enumerate}
\item Let $x,y \in \mathbb{R}$. Then $y \in  \admistwo{x}$ iff  $y \in M_1^{\#}(x)$ iff $y$ is $\Delta^1_3(x)$ in a countable ordinal iff $y$ is $\Delta^1_3(x)$ in an ordinal $<u_{\omega}$.  
\item The relation $y \in  \admistwo{x}$ is $\Pi^1_3$, where $x,y$ ranges over $\mathbb{R}$.
\item The relation $y \in  \Delta^1_3(x)$ is $\Pi^1_3$, where $x,y$ ranges over $\mathbb{R}$.
\end{enumerate}
\end{myproposition}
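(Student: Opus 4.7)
My plan is to prove item 1 as a chain of equivalences and then deduce items 2 and 3 as $\Pi^1_3$-coding statements that fall out of the apparatus already set up. For item 1, I would establish
\[
 y \in M_1^{\#}(x) \ \Leftrightarrow\  y \in \admistwo{x} \ \Leftrightarrow\  y \text{ is } \Delta^1_3(x, \lessthanshort{u_{\omega}}) \ \Leftrightarrow\  y \text{ is } \Delta^1_3(x) \text{ in some countable ordinal.}
\]
The identification of $M_1^{\#}(x)$ with $\admistwo{x}$ is the standard mouse-theoretic fact that $T_2$ and the premouse structure code each other uniformly ($T_2$ is definable inside $M_1^{\#}(x)$, while conversely the premouse structure is absolute to any $T_2$-admissible set containing $x$). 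For $\admistwo{x}\Rightarrow \Delta^1_3(x,\lessthanshort{u_{\omega}})$: if $y\in L_{\alpha}[T_2,x]$ with $\alpha<\kappa_3^x$, then both $y(n)=1$ and $y(n)=0$ are $\Sigma_1$-definable over $\admistwo{x}$ from $(T_2,\alpha,x)$, so by Theorem~\ref{thm:BK-KM} both are $\Pi^1_3(x,\alpha)$ for the countable $\alpha$; to push $\alpha$ strictly below $u_{\omega}$, I apply Theorem~\ref{thm:Kechris_Martin_Pi13_subset_of_u_omega_basis} to the nonempty $\Pi^1_3(x,y)$ set of ordinals $\beta<u_{\omega}$ for which a fixed pair of $\Pi^1_3$ indices witnesses $y\in\Delta^1_3(x,\beta)$. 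The reverse $\Delta^1_3(x,\lessthanshort{u_{\omega}})\Rightarrow \admistwo{x}$ combines the $\Sigma_1$-over-$\admistwo{x}$ representation from Theorem~\ref{thm:BK-KM} with $\Delta_0$-separation inside $\admistwo{x}$. The ``countable'' $\Leftrightarrow$ ``below $u_{\omega}$'' equivalence is trivial one way and again a Kechris-Martin basis argument the other way.

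For item 2, item 1 rephrases $y\in\admistwo{x}$ as $\exists\,\alpha<u_{\omega}\,(y\in\Delta^1_3(x,\alpha))$. The inner relation is $\Pi^1_3$ in $(x,y,\alpha)$: using a good universal $\Pi^1_3$ set together with Corollary~\ref{coro:Pi13_reduction}, exactly as in the proof of Corollary~\ref{coro:Pi13_index_for_Delta13}, I would express it as the existence of a pair of $\Pi^1_3$ indices whose $(x,w)$-sections, universally over $w\in \WO_{\omega}$ with $\sharpcode{w}=\alpha$, reduce to complementary sets and together code the bit-graph of $y$. Existentially quantifying $\alpha$ over $u_{\omega}$ then preserves $\Pi^1_3$ by Corollary~\ref{coro:Kechris_Martin_closure_under_quantification}. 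Item 3 is the special case $\alpha=0$ of the same set-up, yielding the classical $\Pi^1_3$-ness of $\Delta^1_3$-reducibility directly from $\Pi^1_3$-reduction applied to the lightface universal set.

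The main technical obstacle is the implication $\admistwo{x}\Rightarrow \Delta^1_3(x,\lessthanshort{u_{\omega}})$: one must not settle for a countable-ordinal parameter but actually drive it strictly below $u_{\omega}$, and this is exactly where the Kechris-Martin basis theorem is doing the decisive work, supported by the formulation of Theorem~\ref{thm:BK-KM} that permits $\alpha$-parameters in $u_{\omega}$. A secondary care-point is phrasing ``$y$ is $\Delta^1_3(x,\alpha)$'' uniformly in all three variables (so that it is genuinely a $\Pi^1_3$ relation in $(x,y,\alpha)$, not merely $\Pi^1_3$ in $(x,y)$ for each fixed $\alpha$); the universal-$w\in\WO_{\omega}$ formulation over codes of $\alpha$, rather than picking a single code, is what makes this bookkeeping go through.
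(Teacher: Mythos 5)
The paper does not prove this proposition; it is quoted from the cited literature, so there is no in-paper argument to compare against. Judged on its own terms, your treatment of the descriptive-set-theoretic equivalences and of items 2 and 3 follows the standard route and is essentially sound: expressing ``$y \in \Delta^1_3(x,\alpha)$'' as a $\Pi^1_3$ relation in all three variables via the reduction property (as in Corollary~\ref{coro:Pi13_index_for_Delta13}) and then quantifying $\exists \alpha < u_{\omega}$ by Corollary~\ref{coro:Kechris_Martin_closure_under_quantification} is exactly right, as is the use of Theorem~\ref{thm:Kechris_Martin_Pi13_subset_of_u_omega_basis} to pull the parameter down. One caveat in the direction $\admistwo{x} \Rightarrow \Delta^1_3(x,\lessthanshort{u_{\omega}})$: the level $\alpha < \kappa_3^x$ at which $y$ first appears is in general far above $u_{\omega}$, so Theorem~\ref{thm:BK-KM} (whose parameters range over $u_{\omega}$) cannot be applied ``for the countable $\alpha$'' as you write; you must first eliminate the ordinal parameter entirely, e.g.\ by using that the least admissible set over $(T_2,x)$ equals its own $\Sigma_1$-hull of $\se{T_2,x}$, and only then invoke the basis theorem. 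This is fixable but is not what your sketch says.

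The genuine gap is the equivalence $y \in \admistwo{x} \Leftrightarrow y \in M_1^{\#}(x)$. This is the Steel--Woodin mouse set theorem $\mathbb{R} \cap M_1(x) = Q_3(x)$, and your justification --- that ``$T_2$ is definable inside $M_1^{\#}(x)$'' and that the two structures ``code each other uniformly'' --- cannot be repaired: $T_2$ is a tree on $\omega \times u_{\omega}$ of cardinality $\aleph_{\omega}$ and is neither an element of nor definable over the countable premouse $M_1^{\#}(x)$. The actual relationship between $T_2$ and $M_1^{\#}(x)$ passes through the direct limit of its iterates (an analogue of Theorem~\ref{thm:steel} one level down), and even granting that identification, the inclusion $\mathbb{R} \cap \admistwo{x} \subseteq M_1(x)$ is the hard direction, requiring Woodin's genericity-iteration argument; it does not follow from any structural coding between the two objects. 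As written, the deepest of the three equivalences in item 1 is asserted rather than proved.
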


$\kappa_3^x$ is the higher level analog of $\omega_1^x$, the least $x$-admissible. 
It is defined in a different way in \cite[Section 14]{Q_theory}. As in \cite{Q_theory,Kechris_Martin_II}, we define
\begin{align*}
  \lambda_3^x & = \sup \set{ \absvalue{W} }{W \text{ is a } \Delta^1_3(x) \text{ prewellordering on } \mathbb{R}} \\
  & = \sup \set{ \xi < \kappa_3^x}{ \xi \text{ is } \Delta_1\text{-definable over } {L_{\kappa_3^x}[T_2,x]} \text{ from } \se{T_2,x}}.
\end{align*}
The equivalence of these two definitions of  $\kappa_3^x$ is proved in \cite{Kechris_Martin_II}:
 \begin{align*}
  \kappa_3^x &= \sup \set{\ot(W)}{W \text{ is a $\Delta^1_3(x,\lessthanshort{u_{\omega}})$ prewellordering on $\mathbb{R}$}}\\
         &= \sup \set{\lambda_3^{x,y}}{ M_1^{\#}(x) \nleq_{\Delta^1_3} (x,y)}.
  \end{align*}
Moreover,
\begin{displaymath}
  \forall \alpha < u_{\omega} ~\exists w \in \WO_{\omega} ~(\sharpcode{w} = \alpha \wedge \lambda_3^{x,w} < \kappa_3^x) .
\end{displaymath}
Note that $\kappa_3^x < \lambda_3^{M_1^{\#}(x)} < \bolddelta{3}$, as proved in \cite[Lemma 14.2]{Q_theory}. 

The Kunen-Martin theorem implies that  $\kappa_3^x$ is  a bound on the rank of any $\Sigma^1_3(x, \lessthanshort{u_{\omega}})$ wellfounded relation. 

\begin{mytheorem}[Kunen-Martin, {\cite[2G.2]{mos_dst}}]
  \label{thm:Kunen-Martin}
Suppose $W$ is a wellfounded relation on $\mathbb{R}$. Suppose $\gamma$ is an ordinal and $T$ is a tree on $(\omega \times \omega )\times \gamma$ such that $W = p[T]$. Let $L_{\kappa}[T]$ be the least admissible set containing $T$ as an element. Then the rank of $W$ is smaller than
\begin{displaymath}
  \sup \set{\xi < \kappa}{ \xi \text{ is } \Delta_1\text{-definable over } L_{\kappa}[T] \text{ from } \se{T}}.
\end{displaymath}
\end{mytheorem}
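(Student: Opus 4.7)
The plan is the classical Kunen--Martin derived-tree argument, refined by the observation that the derived tree lives inside the admissible set $L_\kappa[T]$.

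First, I would construct from $T$ a wellfounded tree $S$, $\Delta_1$-definable over $L_\kappa[T]$ from the parameter $\{T\}$, whose rank majorises that of $W$. The standard choice has as level-$n$ nodes the tuples
\[
 (s_0, \dots, s_{n-1};\, u_0, \dots, u_{n-2})
\]
with $s_i \in \omega^n$, $u_i \in \gamma^n$, and $(s_{i+1}, s_i, u_i) \in T$ for each $i < n-1$, representing length-$n$ initial segments of an $n$-link $W$-descending chain together with length-$n$ initial segments of its $T$-witnesses; extensions simultaneously prolong each $s_i, u_i$ by one coordinate and adjoin one new pair $(s_n, u_{n-1})$. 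An infinite branch would yield reals $x_0, x_1, \dots$ with branches $f_0, f_1, \dots$ so that $(x_{i+1}, x_i, f_i) \in [T]$, hence $(x_{i+1}, x_i) \in p[T] = W$ for every $i$, contradicting the wellfoundedness of $W$; so $S$ is wellfounded. A routine rank induction using a $T$-witness of each $y W x$ to embed the appropriate cone of $S$ starting with $y$ beneath the corresponding child of the cone starting with $x$ then yields $\rank(S) \geq \rank(W)$, as in the usual Kunen--Martin derivation from \cite{mos_dst}.

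Second, $S$ is $\Delta_1$-definable over $L_\kappa[T]$ from $\{T\}$, so $S \in L_\kappa[T]$. Admissibility gives the rank function $\rank_S$ by $\Sigma_1$-recursion as an element of $L_\kappa[T]$, and $\Sigma_1$-replacement bounds $\ran(\rank_S)$ below $\kappa$; hence $\rank(S) < \kappa$, and $\rank(S)$ is $\Sigma_1$- (and therefore $\Delta_1$-) definable over $L_\kappa[T]$ from $\{T\}$ as the unique such ordinal. The same applies to $\rank(S) + 1 < \kappa$, giving
\[
 \rank(W) \leq \rank(S) < \sup\{\xi < \kappa : \xi \text{ is } \Delta_1\text{-definable over } L_\kappa[T] \text{ from } \{T\}\},
\]
which is the bound asserted by the theorem.

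The main obstacle is the bookkeeping in the derived-tree construction: arranging $S$ so that its extension structure truly matches the way one extends a $W$-descending chain together with its witnesses, and so that the rank comparison $\rank(S) \geq \rank(W)$ proceeds cleanly by induction on $\rank_W$. Once $S$ is in place, the admissibility step is entirely standard.
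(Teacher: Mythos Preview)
The paper does not give its own proof of this theorem; it simply cites it as \cite[2G.2]{mos_dst} and uses it as background. Your proposal is correct and is precisely the standard Kunen--Martin derived-tree argument, augmented by the routine admissibility observation that the derived tree $S$ lies in $L_\kappa[T]$ and hence its rank is a $\Delta_1$-definable ordinal below $\kappa$. There is nothing to compare against in the paper itself.

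One minor remark on your write-up: when you say ``$S$ is $\Delta_1$-definable over $L_\kappa[T]$ from $\{T\}$, so $S \in L_\kappa[T]$,'' the real reason $S \in L_\kappa[T]$ is that $S$ is a $\Delta_0$-in-$T$ subset of a set already in $L_\kappa[T]$ (namely $\bigcup_n (\omega^n)^n \times (\gamma^n)^{n-1}$), so $\Delta_0$-separation suffices. After that, the existence of the rank function as an element is the standard KP fact you invoke. This is purely cosmetic; the argument is sound.
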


\begin{mycorollary}
  \label{coro:Kunen_Martin_Sigma13}
Suppose $W$ is a $\Sigma^1_3(x, \lessthanshort{u_{\omega}})$ wellfounded relation on $\mathbb{R}$. Then the rank of $W$ is smaller than $\kappa_3^x$.
\end{mycorollary}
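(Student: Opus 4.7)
The plan is to apply the Kunen-Martin theorem (Theorem~\ref{thm:Kunen-Martin}) to a tree representation of $W$ built from the Martin-Solovay tree $T_2$, and to identify the resulting bound with $\lambda_3^{x, w}$ for a suitable real parameter $w$ chosen so that $\lambda_3^{x, w} < \kappa_3^x$.

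First, since $W$ is $\Sigma^1_3(x, \lessthanshort{u_{\omega}})$, I would fix some $\alpha < u_\omega$ for which $W$ is $\Sigma^1_3(x, \alpha)$, and then invoke the property recalled in the excerpt (that every $\alpha < u_\omega$ is realized by some $w \in \WO_\omega$ with $\lambda_3^{x, w} < \kappa_3^x$) to pick such a $w$. Treating $(x, w)$ as a single real parameter, $W$ is then $\Sigma^1_3(x, w)$ in the standard sense. Writing $(a, b) \in W \Eqiv \exists c \in \mathbb{R}\, Q(a, b, c)$ for some $\Pi^1_2(x, w)$ predicate $Q$, the Martin-Solovay construction gives a tree $T_2^{*}$ on $\omega^3 \times u_\omega$, recursive in $(T_2, x, w)$, with $Q = p[T_2^{*}]$; absorbing the real quantifier over $c$ into one of the $\omega$-coordinates yields a tree $T$ on $\omega^2 \times u_\omega$ with $W = p[T]$, still recursive in $(T_2, x, w)$.

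Next, applying Theorem~\ref{thm:Kunen-Martin} to this $T$, the rank of $W$ is bounded above by the supremum of ordinals $\Delta_1$-definable over $L_\kappa[T]$ from $\{T\}$, where $L_\kappa[T]$ is the least admissible set containing $T$. Because $T$ is recursive in $(T_2, x, w)$, every such ordinal is also $\Delta_1$-definable over $L_{\kappa_3^{x, w}}[T_2, x, w]$ from $\{T_2, x, w\}$; hence by the admissibility-theoretic characterization of $\lambda_3^{x, w}$ in the excerpt, this supremum is at most $\lambda_3^{x, w}$, and combined with $\lambda_3^{x, w} < \kappa_3^x$ we obtain $\rank(W) < \kappa_3^x$.

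The one delicate point is the last identification: matching the Kunen-Martin bound, stated via the least admissible over $T$ itself, with the characterization of $\lambda_3^{x, w}$, stated via $L_{\kappa_3^{x, w}}[T_2, x, w]$. This rests on $T$ being recursive in $(T_2, x, w)$, from which the transfer of $\Delta_1$-definable ordinals between the two admissible settings is routine but requires some care in bookkeeping.
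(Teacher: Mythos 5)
Your proposal is correct and is essentially the argument the paper intends: the corollary is stated without proof, immediately after the remark that it follows from Theorem~\ref{thm:Kunen-Martin}, and your route (pick $w\in\WO_\omega$ coding the ordinal parameter with $\lambda_3^{x,w}<\kappa_3^x$, build a tree for $W$ from $T_2$ recursive in $(T_2,x,w)$, apply Kunen--Martin, and identify the bound with $\lambda_3^{x,w}$ via the admissibility characterization) is exactly the intended derivation. The bookkeeping you flag at the end is indeed routine given that $T$ and $(T_2,x,w)$ are mutually $\Delta_1$-interpretable over the relevant admissible set.
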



We finally note down the complexity of subsets of ordinals as a consequence of Theorem~\ref{thm:BK-KM}. Assume $\boldDelta{2}$-determinacy. Every subset of $u_{\omega}$ in $\admistwobold$ is $\boldDelta{3}$. Solovay's game shows that every subset of $\omega_1$ in $\admistwobold$ is in $\mathbb{L}$: Let $A \subseteq \omega_1$ be in $\mathbb{L}_{\bolddelta{3}}[T_2]$. Let $B$ and $C$ be $\boldpi{2}$  such that $w \in \WO \wedge \wocode{w} \in A$ iff $ \exists z (w ,z )\in B$ iff $ \neg \exists z (w , z) \in C$. Play the game in which I produces $v$, II produces $w,z$, and II wins iff $v \in \WO \to (w \in \WO \wedge \wocode{w} \geq \wocode{v} \wedge \forall \alpha < \wocode{w} ~\exists (w' ,z') \leq_T z~(\wocode{w'}  = \alpha  \wedge (w' ,z')\in B \cup C))$. I does not win by boundedness. If $\sigma$ is a winning strategy for II, then $A \in L[\sigma]$.

\subsection{Silver's dichotomy  on \texorpdfstring{$\boldpi{3}$}{} equivalence relations}
\label{sec:silvers-dichotomy}

Harrington's proof \cite{harrington_proof_silver}, \cite[Chapter 32]{jech} of Silver's dichotomy  \cite{silver_dichotomy} on $\boldpi{1}$ equivalence relations generalizes to $\boldpi{3}$ in a straightforward fashion. This folklore generalization is stated  in  \cite{hjorth_LT2,hjorth_coarse}  in a slightly weaker form. 

An equivalence relation $E$ on $\mathbb{R}$ is thin iff there is no perfect set $P$ such that $\forall x, y \in P~(x E y \to x=y)$. If $\Gamma$ is a pointclass, for equivalence relations $E,F$ (possibly on different spaces of the form $\mathbb{R}^m \times (u_{\omega})^n$), $E$ is $\Gamma$-reducible to $F$ iff there is a function $\pi$ in $\Gamma$ such that $x E y \eqiv \pi(x)  F  \pi(y)$. 

\begin{mytheorem}[Folklore]
  \label{thm:silver_dichotomy_pi13}
  Assume $\boldDelta{2}$-determinacy. Let $x \in \mathbb{R}$. If $E$ is a thin $\Pi^1_3(x)$ equivalence relation on $\mathbb{R}$, then $E$  is ${\Delta^1_3}(x) $ reducible to a $\Pi^1_3(x)$ equivalence relation on a $\Pi^1_3(x)$ subset of $u_{\omega}$.
\end{mytheorem}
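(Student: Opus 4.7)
The plan is to adapt Harrington's Gandy-forcing proof of Silver's dichotomy to level $3$, substituting $Q_3$-reducibility for hyperarithmetic reducibility and nonempty $\Sigma^1_3(x, \lessthanshort{u_{\omega}})$ conditions for nonempty $\Sigma^1_1(x)$ conditions. The key intermediate claim is a level-$3$ Gandy-Harrington lemma: if $E$ is thin, then every $E$-class contains a member of $Q_3(x)$, equivalently, a member of $\admistwo{x}$. Granted this, the reduction into $u_{\omega}$ is a coding exercise using the $Q$-theory tools of the previous subsection.

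For the lemma, I would set up the forcing $\mathbb{P}$ whose conditions are the nonempty $\Sigma^1_3(x, \lessthanshort{u_{\omega}})$ subsets of $\mathbb{R}$, ordered by inclusion. The two facts needed are (i) every nonempty condition contains a $Q_3(x)$-real, a relativized form of Theorem~\ref{thm:Kechris_Martin_Pi13_subset_of_u_omega_basis}, and (ii) the forcing relation for $\Pi^1_3$ and $\Sigma^1_3$ statements about the generic real is itself definable over $\admistwo{x}$ with $T_2$ as parameter, using Theorem~\ref{thm:BK-KM} and Corollary~\ref{coro:Kechris_Martin_closure_under_quantification}. Now suppose toward contradiction that some $y \in \mathbb{R}$ has no $E$-equivalent real in $Q_3(x)$, and set $N = \{z : [z]_E \cap Q_3(x) = \emptyset\}$, which is $E$-invariant and contains $y$. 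Run the standard Harrington-Kechris-Louveau perfect-tree construction: build $\{p_s : s \in 2^{<\omega}\}$ of nonempty $\Sigma^1_3(x, \lessthanshort{u_{\omega}})$ conditions with $p_{s^{\frown}0}, p_{s^{\frown}1} \subseteq p_s$, forcing $E$-inequivalent generics through distinct branches. The key enabling fact is that no condition meeting $N$ can lie in a single $E$-class: if $p \subseteq [z]_E$ for some $z \in p \cap N$, then (i) places a $Q_3(x)$-real inside $[z]_E$, contradicting $z \in N$. The required $E$-separations are then obtained via the reduction principle (Corollary~\ref{coro:Pi13_reduction}) applied to the $\Sigma^1_3$ statement $\neg (a E b)$. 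The resulting $2^{\omega}$ branches yield a perfect set of pairwise $E$-inequivalent reals, contradicting thinness.

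For the reduction, combine Corollary~\ref{coro:Pi13_index_for_Delta13} with Corollary~\ref{coro:uniformization_Pi13_u_omega} to obtain a $\Pi^1_3(x)$ indexing $\alpha \mapsto r_{\alpha}$ of $Q_3(x)$-reals by a $\Pi^1_3(x)$ set $C^{*} \subseteq u_{\omega}$, together with a $\Pi^1_3(x)$ norm $\varphi$ on $C^{*}$ from Theorem~\ref{thm:pwo_subsets_of_u_omega_times_R}. Define
\[
\pi(y) = \text{the } \leq^{*}_{\varphi}\text{-least } \alpha \in C^{*} \text{ with } y E r_{\alpha}.
\]
By the lemma $\pi$ is total, and by the prewellordering and good-coding properties, $\pi$ is $\Delta^1_3(x)$. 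Let $U = \pi[\mathbb{R}]$, a $\Pi^1_3(x)$ subset of $u_{\omega}$, and define $\alpha F \beta$ on $U$ by $r_{\alpha} E r_{\beta}$; $F$ is $\Pi^1_3(x)$ and $\pi$ reduces $E$ to $F$. The main obstacle is step one: the definability of the forcing relation over $\admistwo{x}$ and the calibration of the splittings to stay within $\Sigma^1_3(x, \lessthanshort{u_{\omega}})$. At level $1$ Harrington uses admissibility and the Gandy basis; at level $3$ one must juggle Theorem~\ref{thm:BK-KM}, Corollary~\ref{coro:Kechris_Martin_closure_under_quantification}, and Theorem~\ref{thm:Kechris_Martin_Pi13_subset_of_u_omega_basis} together. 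Once this setup is established, the perfect-tree construction and the coding step follow the familiar pattern.
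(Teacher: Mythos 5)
Your key intermediate claim --- that thinness of $E$ forces every $E$-class to contain a member of $Q_3(x)$ --- is false, and your argument leans on it in an essential way (it is exactly what makes your map $\pi$ total). The level-1 analogue already fails: take a nonempty lightface $\Pi^0_1$ subset $B$ of $\omega^{\omega}$ with no hyperarithmetic member (Kleene), and let $a\,E\,b$ iff $(a\in B \leftrightarrow b\in B)$. This is an arithmetic, hence $\Pi^1_1$, equivalence relation with exactly two classes, so it is thin, yet the class $B$ contains no $\Delta^1_1$ real. The same construction lifts to level 3: the set of reals outside $Q_3(x)$ is $\Sigma^1_3(x)$ (the relation $y \in \admistwo{x}$ is $\Pi^1_3$), hence of the form $\{y : \exists z\, (y,z)\in P\}$ with $P$ a $\Pi^1_2(x)$ set; since $Q_3(x)$ is closed under $\Delta^1_3$-reducibility, $P$ is a nonempty $\Delta^1_3(x)$ set disjoint from $Q_3(x)$, and the two-class relation it induces is thin, $\Delta^1_3(x)$, and has a class with no $Q_3(x)$ member. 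This is also where your forcing sketch breaks: when a product condition forces the two generics to be $E$-equivalent, definability of the forcing relation over $\admistwo{x}$ lets you extract a definable \emph{set} inside the class of the generic, not a definable \emph{point} of that class, so membership of the generic in your set $N$ yields no contradiction.

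What the Harrington argument actually gives at level 3 --- and what the paper's proof uses --- is the set form of the basis: for every $y$ there is a $\Delta^1_3(x,\lessthanshort{u_{\omega}})$ set $A$ with $y \in A \subseteq [y]_E$. The reduction is then built from ordinal codes for definable \emph{subsets} of classes rather than from definable members: using the $\Pi^1_3$-coding of $\Delta^1_3(\lessthanshort{u_{\omega}})$ sets (Corollary~\ref{coro:Pi13_index_for_Delta13}), one restricts to the $\Pi^1_3$ set $C'$ of codes $\alpha$ such that $D_{\alpha}$ lies inside a single $E$-class, uniformizes $\{(y,\alpha) : \alpha \in C' \wedge y \in D_{\alpha}\}$ by Corollary~\ref{coro:uniformization_Pi13_u_omega}, and declares $\alpha\,F\,\beta$ iff every member of $D_{\alpha}$ is $E$-equivalent to every member of $D_{\beta}$; that $\pi$ is also $\Sigma^1_3$ follows from Corollary~\ref{coro:Kechris_Martin_closure_under_quantification} since $\pi$ is total into $u_{\omega}$. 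Your final coding step is structurally fine, but it cannot be repaired without replacing the member-basis claim by this set version.
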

\begin{proof}
For simplicity, let $x=0$.  The generalization of Harrington's proof of Silver's dichotomy shows that for every $y \in \mathbb{R}$, there is a $\Delta^1_3(\lessthanshort{u_{\omega}})$ set $A$ such that $y \in A \subseteq [y]_E$. 

  Let $C,P,S, (D_{\alpha})_{\alpha \in C}$ be the $\Pi^1_3$ coding system for $\Delta^1_3(\lessthanshort{u_{\omega}})$ subsets of $\mathbb{R}$, given by  Corollary~\ref{coro:Pi13_index_for_Delta13}.
Let $\alpha \in  C'$ iff $\alpha \in  C$ and $\forall y \in D_{\alpha} \forall z \in D_{\alpha} (y E z)$. $C'$ is $\Pi^1_3$. 
%
The set
\begin{displaymath}
A = \set{(y,\alpha)}{\alpha \in C' \wedge y \in D_{\alpha}}
\end{displaymath}
is $\Pi^1_3$. By Corollary~\ref{coro:uniformization_Pi13_u_omega}, $A$ can be uniformized by a $\Pi^1_3$ function $\pi$. Let  $\alpha F \beta $ iff $\alpha\in C'$, $\beta\in C'$, and $\forall y \in D_{\alpha} \forall z \in D_{\beta} (y E z)$. $F$ is a $\Pi^1_3$ equivalence relation on $C'$. 
$\pi$ is a reduction from $E$ to $F$. To see that $\pi$ is also $\Sigma^1_3$, apply Corollary~\ref{coro:Kechris_Martin_closure_under_quantification} and use the fact that $\pi$ is a total function taking values in $u_{\omega}$.
\end{proof}
 
The reduction $\pi$ and the target equivalence relation $F$ in Theorem~\ref{thm:silver_dichotomy_pi13} are uniformly definable from the $\Pi^1_3(x)$ definition of $E$, independent of $x$. 
A similar uniformity applies to the following corollary.

\begin{mycorollary}
  \label{coro:Silver_dichotomy_Delta13}
    Assume $\boldDelta{2}$-determinacy. Let $x \in \mathbb{R}$. If $E$ is a thin $\Delta^1_3(x)$ equivalence relation on $\mathbb{R}$, then $E$  is ${\Delta^1_3}(x) $ reducible to  $=_{u_{\omega}}$. Here $\alpha =_{u_{\omega}} \beta$ iff $\alpha = \beta < u_{\omega}$. 
\end{mycorollary}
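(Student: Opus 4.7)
The plan is to post-compose the reduction from Theorem~\ref{thm:silver_dichotomy_pi13} with a canonical uniformization that turns the auxiliary $\Pi^1_3(x)$ equivalence $F$ on $C' \subseteq u_{\omega}$ into equality on $u_{\omega}$.

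First, since $E$ is thin and $\Pi^1_3(x)$, Theorem~\ref{thm:silver_dichotomy_pi13} supplies a $\Delta^1_3(x)$ function $\pi_0 : \mathbb{R} \to u_{\omega}$ and a $\Pi^1_3(x)$ equivalence relation $F$ on a $\Pi^1_3(x)$ subset $C' \subseteq u_{\omega}$ with $y\, E\, z \Leftrightarrow \pi_0(y)\, F\, \pi_0(z)$.

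Second, apply Corollary~\ref{coro:uniformization_Pi13_u_omega} (with a dummy real coordinate) to the $\Pi^1_3(x)$ set $A = \{(\alpha, \gamma) \in u_{\omega} \times u_{\omega} : \alpha, \gamma \in C' \wedge \gamma F \alpha\}$, producing a $\Pi^1_3(x)$ uniformizing function $\rho : C' \to u_{\omega}$ with $\rho(\alpha) \in [\alpha]_F \cap C'$. Running the standard proof of that corollary with a $\Pi^1_3(x)$ norm supplied by Theorem~\ref{thm:pwo_subsets_of_u_omega_times_R}, chosen to depend only on the second coordinate $\gamma$, ensures that $\rho(\alpha)$ is determined by the fiber $A_\alpha = [\alpha]_F$; hence $\rho$ is constant on $F$-classes. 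Disjointness of distinct $F$-classes together with $\rho(\alpha) \in [\alpha]_F$ gives that $\rho$ is injective on $F$-classes. Thus $\rho$ reduces $F$ to $=_{u_{\omega}}$.

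Third, since $\rho$ has $\Pi^1_3(x)$ graph and is a total function into $u_{\omega}$, the functionality identity
\[
\rho(\alpha) = \gamma \;\Leftrightarrow\; \neg \exists \gamma' \in u_{\omega}\, (\gamma' \neq \gamma \wedge \rho(\alpha) = \gamma')
\]
combined with closure of $\Pi^1_3(x)$ under $u_{\omega}$-quantification (Corollary~\ref{coro:Kechris_Martin_closure_under_quantification}) gives that the graph of $\rho$ is also $\Sigma^1_3(x)$, hence $\Delta^1_3(x)$. The composition $\pi := \rho \circ \pi_0 : \mathbb{R} \to u_{\omega}$ is then the desired $\Delta^1_3(x)$ reduction of $E$ to $=_{u_{\omega}}$.

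The main technical point is arranging that the uniformization $\rho$ genuinely depends only on the $F$-class of its input; this is what forces us to spell out the proof of Corollary~\ref{coro:uniformization_Pi13_u_omega} via a $\Pi^1_3(x)$ norm that is insensitive to the first coordinate, rather than invoking it as a black box. Once this is in place, the passage from $\Pi^1_3(x)$ graph to $\Delta^1_3(x)$ graph is a direct application of functionality and the $u_{\omega}$-quantifier closure.
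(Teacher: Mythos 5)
There is a genuine gap in your second step. You want a $\Pi^1_3(x)$ selector $\rho$ for the $F$-classes, obtained by running the easy uniformization of $A=\{(\alpha,\gamma):\alpha,\gamma\in C'\wedge\gamma F\alpha\}$ with a norm $\varphi(\alpha,\gamma)=\psi(\gamma)$ depending only on the second coordinate. But no such $\varphi$ is a $\Pi^1_3(x)$-norm on $A$: the relation $(\alpha,\gamma)\le^{*}_{\varphi}(\alpha',\gamma')$ unwinds to $(\alpha,\gamma)\in A\wedge\bigl((\alpha',\gamma')\in A\to\psi(\gamma)\le\psi(\gamma')\bigr)$, and since $A$, $C'$ and $F$ are only $\Pi^1_3(x)$ --- $C'$ is carved out of the $\Pi^1_3$ coding set $C$ of Corollary~\ref{coro:Pi13_index_for_Delta13} and is genuinely non-$\Sigma^1_3$ --- the implication contributes a $\Sigma^1_3(x)$ disjunct that cannot be absorbed. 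Concretely, writing out ``$\rho(\alpha)=\gamma$'' forces a clause of the form ``for every $\gamma'$ with $\gamma' F\alpha$, $(\psi(\gamma),\gamma)$ is lexicographically $\le(\psi(\gamma'),\gamma')$'', whose matrix is $\neg\Pi^1_3\vee\Pi^1_3$; even using closure under $u_{\omega}$-quantification the graph of $\rho$ only comes out $\Pi^1_3(x)\wedge\Sigma^1_3(x)$, which is neither $\Pi^1_3(x)$ nor $\Sigma^1_3(x)$, and composing with $\pi_0$ does not repair this. Theorem~\ref{thm:pwo_subsets_of_u_omega_times_R} supplies \emph{some} $\Pi^1_3$-norm on a given $\Pi^1_3$ set; it does not let you prescribe its values, and the prescription you need is exactly the one that destroys the definability of $\le^{*}_{\varphi}$.

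The paper's proof sidesteps this by never leaving the real side: it keeps the set $A=\{(y,\alpha):\alpha\in C'\wedge y\in D_{\alpha}\}$ from the proof of Theorem~\ref{thm:silver_dichotomy_pi13}, takes a genuine $\Pi^1_3$-norm $\varphi$ on it, and defines $\pi(y)=\alpha$ by minimizing $(\varphi(z,\beta),\beta)$ over \emph{all} pairs $(z,\beta)\in A$ with $z\mathrel{E}y$, not just those with first coordinate $y$. Because $E$ is $\Delta^1_3(x)$ --- this is precisely where the strengthened hypothesis of the corollary enters, as the paper remarks --- the guard ``$z\mathrel{E}y\to\cdots$'' stays $\Pi^1_3(x)$, the minimization depends only on $[y]_E$, and injectivity on classes follows from $\pi(y)\in C'$ with $y\in D_{\pi(y)}\subseteq[y]_E$. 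If you want to keep your two-stage architecture, you would have to thread $E$ (via $\pi_0$) back into the definition of the selector, at which point it collapses to the paper's one-step argument.
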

\begin{proof}
Assume $x=0$.
  Proceed as in the proof of Theorem~\ref{thm:silver_dichotomy_pi13} until we reach the set $A$. We now show that $A$ can be uniformized by a $\Pi^1_3$ function $\pi$ such that $y E z$ iff $\pi(y) = \pi(z)$. Indeed, let $\varphi$ be a $\Pi^1_3$-norm on $A$, given by Theorem~\ref{thm:pwo_subsets_of_u_omega_times_R}, and let $\pi(y)= \alpha$ iff $(y,\alpha) \in A$ and $(\varphi(y,\alpha),\alpha)$ is lexicographically minimal among the set $\set{(\varphi(z,\beta),\beta)}{ z E y  \wedge (z,\beta) \in A}$. Similarly to the proof of Corollary~\ref{coro:uniformization_Pi13_u_omega}, $\pi$ is $\Pi^1_3$ (we use $E \in \Delta^1_3$ here).  Again, $\pi$ is $\Sigma^1_3$. $\pi$ is the desired reduction from $E$ to $=_{u_{\omega}}$.
\end{proof}

It should be possible to give an alternative proof of Corollary~\ref{coro:Silver_dichotomy_Delta13} using the forceless proof of the dichotomy of chromatic numbers of graphs in \cite{miller_bsl_2012}, but the author has not checked the details.

\begin{mycorollary}
  \label{coro:Delta13_pwo_computable_in_LT2}
      Assume $\boldDelta{2}$-determinacy. Let $x \in \mathbb{R}$. If $\leq^{*}$ is a $\Delta^1_3(x)$ prewellordering on $\mathbb{R}$ and $A $ is a $\Sigma^1_3(x)$ subset of $\mathbb{R}$, then $\sharpcode{\leq^{*}}$ and $\set{ \wocode{y}_{\leq^{*}}}{y \in A}$ are both in $\admistwo{M_1^{\#}(x)}$ and  $\Delta_1$-definable over $\admistwo{M_1^{\#}(x)}$ from parameters in $\se{T_2,M_1^{\#}(x)}$.
\end{mycorollary}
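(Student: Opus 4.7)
I aim to realize $\sharpcode{\leq^*}$ and $\set{\wocode{y}_{\leq^*}}{y \in A}$ as (the range of) the ranks of explicit Kunen-Martin-style wellfounded trees living inside $\admistwo{M_1^\#(x)}$, and compute them by $\Sigma_1$-recursion. Corollary~\ref{coro:Kunen_Martin_Sigma13} bounds $\sharpcode{\leq^*}$, and a fortiori every element of $\set{\wocode{y}_{\leq^*}}{y \in A}$, by $\kappa_3^x$; since $M_1^\#(x) \notin \admistwo{x}$ (otherwise $M_1^\#(x)$ would be $\Delta^1_3(x)$, contradicting the Proposition), we have $\kappa_3^x < \kappa_3^{M_1^\#(x)}$, so both targets lie, as an ordinal and as a bounded set of ordinals, inside $\admistwo{M_1^\#(x)}$.

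Using $\Sigma^1_3 = \exists^{\mathbb{R}}\Pi^1_2$ and the fact that $\Pi^1_2$ sets are projections of subtrees of $T_2$, I build trees $T_{<^*}$ and $T_A$ on $\omega^k \times (\omega \times u_{\omega})$ ($k = 2, 1$) that are $\Delta_1$-definable from $\se{T_2, x}$, hence elements of $\admistwo{x}$, and such that ${<^*} = p[T_{<^*}]$ and $A = p[T_A]$. Form the Kunen-Martin tree $S$ consisting of finite sequences $\corner{(u_i, v_i, w_i, f_i) : i \leq n}$ of common-length nodes of $T_{<^*}$ with $v_i = u_{i+1}$. The tree $S$ is wellfounded because ${<^*}$ is; and since ${\leq^*}$ is a pre-wellordering -- so that every ordinal $< \sharpcode{\leq^*}$ is the $<^*$-rank of some real -- the calculation underlying Theorem~\ref{thm:Kunen-Martin} sharpens to $\rank_S(\emptyset) = \sharpcode{\leq^*}$. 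The rank function on $S$ is $\Sigma_1$-definable in $\admistwo{M_1^\#(x)}$ by admissible recursion, so $\sharpcode{\leq^*}$ has a $\Sigma_1$ definition from $\se{T_2, M_1^\#(x)}$; since it is uniquely characterized, it is also $\Pi_1$, hence $\Delta_1$.

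For the range on $A$, augment $S$ to a tree $S_A$ by carrying an extra coordinate $g_i$ extending a $T_A$-branch above $u_0$, so that root-successors of $S_A$ correspond to pairs (witness of $y_0 \in A$, one-step $<^*$-descent below $y_0$) and $\rank_{S_A}(c) = \wocode{y_0}_{\leq^*}$ for the associated $y_0$. Applying $\Sigma_1$-replacement to the map $c \mapsto \rank_{S_A}(c)$ on the set of root-successors of $S_A$ inside $\admistwo{M_1^\#(x)}$ yields $\set{\wocode{y}_{\leq^*}}{y \in A}$ as an element of $\admistwo{M_1^\#(x)}$, $\Delta_1$-definable from $\se{T_2, M_1^\#(x)}$.

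The main obstacle is the \emph{exactness} of the Kunen-Martin rank computation: Theorem~\ref{thm:Kunen-Martin} supplies only an upper bound, whereas I need the equalities $\rank_S(\emptyset) = \sharpcode{\leq^*}$ and the analogous statement for root-successors of $S_A$. Exactness rests on the pre-wellordering structure guaranteeing that every relevant ordinal is realized by a genuine descending chain of reals; this is standard but deserves explicit verification, along with care that the tree constructions retain $\Delta_1$-definability over $\admistwo{M_1^\#(x)}$ from the stated parameters.
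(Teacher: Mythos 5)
There is a genuine gap at exactly the step you flagged: the ``exactness'' of the Kunen--Martin rank computation fails, and it is not a routine verification. The tree $S$ is built from finite nodes of a Suslin representation $T_{<^{*}}$ derived from $T_2$, and such a representation can (and typically does) contain deep wellfounded ``junk'': nodes of great height that extend to no infinite branch and hence witness no actual pair in $<^{*}$. These produce long chains in $S$ corresponding to no genuine $<^{*}$-descending sequence of reals, so $\rank_S(\emptyset)$ can strictly exceed $\sharpcode{\leq^{*}}$ (already for $<^{*}=\emptyset$ one may have $p[T_{<^{*}}]=\emptyset$ while $T_{<^{*}}$, and hence $S$, has large rank). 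The prewellordering structure only guarantees that every ordinal below $\sharpcode{\leq^{*}}$ is realized as the rank of some real, which yields $\sharpcode{\leq^{*}}\leq\rank_S(\emptyset)$ --- the same direction as Theorem~\ref{thm:Kunen-Martin}, not the missing one. The situation is worse for $\set{\wocode{y}_{\leq^{*}}}{y\in A}$: a root-successor of $S_A$ determines only finitely much of $y_0$, and even the supremum of ranks over all nodes compatible with a fixed $y_0$ merely bounds $\wocode{y_0}_{\leq^{*}}$ from above, so the set produced by your $\Sigma_1$-replacement is not the desired set. Nor can you repair this by running the rank recursion on $<^{*}$ itself, since $\mathbb{R}\not\subseteq\admistwo{M_1^{\#}(x)}$: the field of the relation is simply not available inside the admissible set.

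The paper's proof avoids trees entirely. The equivalence relation $a\equiv^{*}b \eqiv a\leq^{*}b\leq^{*}a$ is thin, so Corollary~\ref{coro:Silver_dichotomy_Delta13} gives a $\Delta^1_3(x)$ reduction $\pi:\mathbb{R}\to u_{\omega}$ of $\equiv^{*}$ to equality on $u_{\omega}$; then $\leq^{*}$ transfers to a genuine wellordering $<^{**}$ on $\ran(\pi)\subseteq u_{\omega}$ with $\sharpcode{\leq^{*}}=\ot(<^{**})$. Since $\ran(\pi)$ and $<^{**}$ are $\Sigma^1_3$, Theorem~\ref{thm:BK-KM} makes them $\Pi_1$-definable over $\admistwo{x}$, hence (using $\kappa_3^x<\kappa_3^{M_1^{\#}(x)}$) $\Delta_1$-definable over $\admistwo{M_1^{\#}(x)}$; every element of the field is now an ordinal below $u_{\omega}$, so it lies in the admissible set, and the order type together with the image of $A$ under the rank map is computed by ordinary $\Sigma_1$-recursion. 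Replacing reals by ordinals below $u_{\omega}$ via the thin-equivalence-relation dichotomy is the essential idea your proposal is missing.
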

\begin{proof}
  The equivalence relation $ a \equiv^{*} b \eqiv a \leq^{*}b \leq ^{*} a$ is thin. By Corollary~\ref{coro:Silver_dichotomy_Delta13},  we get a $\Delta^1_3(x)$-function $\pi : \mathbb{R} \to u_{\omega}$ such that $a \equiv^{*} b$ iff $\pi(a) = \pi(b)$. $\pi$ induces a wellordering $<^{**}$ on $\ran(\pi)$ where $\pi(a) <^{**} \pi(b)$ iff $a <^{*} b$. $\sharpcode{\leq^{*}}$   is then the order type of $<^{**}$.  $\ran(\pi)$ and $<^{**}$ are $\Sigma^1_3$, hence $\Pi_1$-definable over $\admistwo{x}$ from $\se{T_2,x}$ by Theorem~\ref{thm:BK-KM}. Put $w = M_1^{\#}(x)$. By \cite[Lemma 14.2]{Q_theory}, $\kappa_3^x < \kappa_3^w$. So $\ran(\pi)$ and $<^{**}$ are $\Delta_1$-definable over $\admistwo{w}$ from $\se{T_2,w}$. By admissibility, $\sharpcode{\leq^{*}}$  is $\Delta_1$-definable in $\admistwo{w}$ from $\se{T_2,w}$.  The part concerning $\set{ \wocode{y}_{\leq^{*}}}{y \in A}$ is similar.
\end{proof}

\begin{remark}
  We do not know if $M_1^{\#}(x)$ can be replaced by $x$ in the conclusion of Corollary~\ref{coro:Delta13_pwo_computable_in_LT2}.
\end{remark}

\subsection{$N$-homogeneous trees}
\label{sec:N-homogeneous-trees}

As this paper and its sequels deal with restricted ultrapowers and ``restricted homogeneous trees''  over and over again, it is convenient to abstract the relevant properties.

A transitive set or class $N$ is \emph{admissibly closed} iff
\begin{displaymath}
\forall M \in N \exists M' \in N (M' \text{ is admissible } \wedge M \in M')
\end{displaymath}
Suppose $N$ is admissibly closed and $X \in N$.
$\nu$ is an \emph{$N$-filter} on $X$ iff there is a filter $\nu^{*}$ on $X$ such that $\nu = \nu^{*} \cap N$. 
An $N$-filter $\nu$ is an \emph{$N$-measure} on $X$ iff $\nu$ is countably complete and for any $A \in \power(X) \cap N$, either $A \in \nu$ or $X \setminus A \in \nu$. If $\nu$ is an $N$-measure on $X$, then $\ult(N,\nu)$ is the ultrapower consisting of equivalence classes of functions $f:X \to N$ that lie in $N$. Denote by $j_{N}^{\nu}: N \to \ult(N,\nu)$ the ultrapower map and $[f]^{\nu}_N$ the $\nu$-equivalence class of $f$ in $\ult(N,\nu)$. The ultrapower is well-defined by admissible closedness of $N$, and is wellfounded by countable completeness of $\nu$. The usual \Los{} proof shows for any transitive $M\in N$ containing $\se{X}$,  for any first order formula $\varphi$,   for any $f_i : X \to M$ that belongs to $N$, $1\leq i \leq n$,
\begin{displaymath}
 j^{\nu}_N (M) \models \varphi([f_1]^{\nu}_N,\ldots,[f_n]^{\nu}_N)
\end{displaymath}
iff
\begin{displaymath}
\text{for $\nu$-a.e.\ $a \in X$, } M \models \varphi(f_1(a), \ldots,f_n(a)).
\end{displaymath}
Suppose $\nu$ is an $N$-measure on $X^n$ and $\mu$ is an $N$-measure on $X^m$, $m\leq n$. 
 $\nu$ \emph{projects to} $\mu$ iff for all $A \subseteq X^m$, $A \in \mu$ iff $\set{\vec{\alpha}}{\vec{\alpha} \res m\in A} \in \nu$.  $\vec{\nu} = (\nu_n)_{n<\omega}$ is a \emph{tower of $N$-measures} on $X$ iff for each $n$, $\nu_n$ is an $N$-measure on $X^n$ and $\nu_n$ projects to $\nu_m$ for all $m<n$.

Suppose $N$ is admissibly closed, $X\in N$, and $\vec{\nu} = (\nu_n)_{n<\omega}$ is a tower of $N$-measures on $X$. This naturally induces factor maps $j_N^{\nu_m,\nu_n}$ from $\ult(N,\nu_m)$ to $\ult(N,\nu_n)$.
We say $\vec{\nu}$ is \emph{close to $N$} iff whenever $(A_n)_{n<\omega}$ is a sequence such that $A_n \in \nu_n \cap N$ for all $n$, there exists $(B_n)_{n<\omega} \in N$ such that $B_n \subseteq A_n$ and $B_n \in \nu_n$ for all $n$. If $\vec{\nu}$ is close to $N$, we say $\vec{\nu}$ is \emph{$N$-countably complete} iff whenever $(A_n)_{n<\omega}$ is a sequence such that $A_n \in \nu_n \cap N$ for all $n$, there exists $(a_n)_{n<\omega}$ such that $(a_1,\ldots,a_n) \in A_n$ for all $n$. The usual homogeneous tree argument shows:

\begin{myproposition}\label{prop:countable-complete-to-wellfounded}
 Suppose $\vec{\nu} = (\nu_n)_{n<\omega}$ is close to $N$. Then $\vec{\nu}$ is $N$-countably complete iff the direct limit of $(j^{\nu_m,\nu_{n}}_{N})_{m<n<\omega}$ is wellfounded.
\end{myproposition}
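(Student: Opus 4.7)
The plan is to prove both implications by contradiction, in each case converting between a strictly decreasing sequence in the direct limit and a thread through a sequence of measure-one sets, using the standard homogeneous tree machinery.

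For the forward direction, assume $\vec{\nu}$ is $N$-countably complete and suppose toward a contradiction that the direct limit of $(j^{\nu_m,\nu_{n}}_{N})_{m<n<\omega}$ is illfounded. Fix ordinals $\alpha_n \in \ult(N,\nu_n)$ whose images in the direct limit form a strictly decreasing sequence, and represent each as $\alpha_n = [f_n]^{\nu_n}_N$ with $f_n \in N$ and $f_n : X^n \to \Ord$. By \Los{}, for each pair $m<n$ there is $A_{m,n} \in \nu_n \cap N$ on which the lift of $f_m$ to $X^n$ strictly exceeds $f_n$. Let $A_n = \bigcap_{m \leq n} A_{m,n} \in \nu_n \cap N$. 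By $N$-countable completeness applied to $(A_n)_{n<\omega}$, obtain $(a_k)_{k<\omega}$ with $(a_1,\dots,a_n) \in A_n$ for all $n$. Then $(f_n(a_1,\dots,a_n))_{n<\omega}$ is a strictly decreasing sequence of ordinals in $V$, a contradiction.

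For the converse, assume the direct limit is wellfounded and fix $(A_n)_{n<\omega}$ with $A_n \in \nu_n \cap N$. By closeness, we may assume $(A_n)_{n<\omega} \in N$, and by shrinking each $A_n$ using that $\nu_n$ projects to $\nu_m$ for $m<n$, we may further arrange that $T = \bigcup_n A_n$, viewed as a set of finite sequences, is a tree in $N$, i.e., $(a_1,\dots,a_n) \in A_n$ implies $(a_1,\dots,a_m) \in A_m$ for all $m \leq n$. Suppose toward a contradiction that $T$ has no infinite branch. Then $T$ is wellfounded, and by admissible closedness its rank function $\rho : T \to \Ord$ belongs to $N$. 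Define $g_n : X^n \to \Ord$ in $N$ by $g_n(\vec a) = \rho(\vec a)$ if $\vec a \in T$ and $0$ otherwise, and set $\beta_n = [g_n]^{\nu_n}_N$. The tree property of $T$ together with \Los{} gives $j^{\nu_n,\nu_{n+1}}_{N}(\beta_n) > \beta_{n+1}$ in $\ult(N,\nu_{n+1})$, so the images of the $\beta_n$ in the direct limit are strictly decreasing, contradicting wellfoundedness. Therefore $T$ has an infinite branch, yielding the required thread.

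The main technical point is the careful use of closeness at the beginning of the converse: we need $(A_n)_{n<\omega}$, its tree-like refinement, and consequently the rank function $\rho$ all to lie in $N$, so that $\rho$ is a legitimate \Los{} representative and both the ordinals $\beta_n$ and the relation $j^{\nu_n,\nu_{n+1}}_N(\beta_n) > \beta_{n+1}$ are controlled from inside $N$. Once this is set up, the rest is the standard Martin-style homogeneous-tree computation, and the forward direction is its direct reverse.
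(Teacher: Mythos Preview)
Your proof is correct and follows essentially the same route as the paper's. The paper only sketches the converse direction (calling the forward direction ``the usual homogeneous tree argument''), and there it likewise uses closeness to pull the sequence into $N$, forms a wellfounded tree, invokes admissible closedness to place the rank function in $N$, and reads off a descending sequence in the direct limit. The only cosmetic difference is that you first shrink the $A_n$ to make $\bigcup_n A_n$ a tree, whereas the paper defines $T$ directly as the set of tuples all of whose initial segments lie in the corresponding $B_i$; these are equivalent bookkeeping choices.
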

\begin{proof}
  The new part is to show $N$-countable completeness of $\vec{\nu}$ from wellfoundedness of the direct limit of  $(j^{\nu_m,\nu_{n}}_{N})_{m<n<\omega}$. Given $(A_n)_{n<\omega}$ such that $A_n \in \nu_n \cap N$ for all $n$, suppose towards contradiction that there does not exist $(a_n)_{n<\omega}$ such that $(a_1,\ldots,a_n) \in A_n$ for all $n$. By closedness of $\vec{\nu}$ to $N$, let $(B_n)_{n<\omega} \in N$ such that $B_n \subseteq A_n$ and $B_n \in \nu_n$ for all $n$. The tree $T$ consisting of $(a_1,\ldots,a_n)$ such that $a_i \in B_i$ for all $i$ is wellfounded. The ranking function $f$ of $T$ belongs to $N$ by admissible closedness. From $f$ we can construct $f_n: X^n \to N$ so that $f_n \in N$ and $[f_n]_{\nu_n}>[f_{n+1}]_{\nu_{n+1}}$ as usual, contradicting to wellfoundedness of $(j^{\nu_m,\nu_{n}}_{N})_{m<n<\omega}$.
\end{proof}

An $N$-homogeneous system is a sequence $(\nu_s)_{s \in \omega^{<\omega}}$ such that for any $x \in \mathbb{R}$, $\nu_x =_{DEF} (\nu_{x\res n})_{n<\omega}$ is a tower of $N$-measures which is close to $N$. For $X \in N$, a tree $T$ on $\omega \times X$ is $N$-homogeneous iff there is an $N$-homogeneous system $(\nu_s)_{s \in \omega^{<\omega}}$ such that $T_s \in\nu_s $ for all $s \in \omega^{<\omega}$ and for all $x \in p[T]$, $\nu_x$ is $N$-countably complete. If $T$ is $N$-homogeneous, by Proposition~\ref{prop:countable-complete-to-wellfounded} and standard arguments, $x \in p[T]$ iff the direct limit of $(j^{\nu_{x\res m},{\nu_{x\res n}}}_N)_{m<n<\omega}$ is wellfounded.

\subsection{$L[T_{2n+1}]$ as a mouse}
\label{sec:hod_computation}

We recall the Moschovakis tree $T_{2n+1}$ and Steel's computation of $\mathbb{L}[T_{2n+1}]$.

 Assuming $\boldDelta{2n}$-determinacy, $T_{2n+1}$ is the tree of the Moschovakis $\Pi^1_{2n+1}$-scale on a good universal $\Pi^1_{2n+1}$ set, defined in \cite[Chapter 6]{mos_dst}. 

Assuming $\boldpi{2n+1}$-determinacy, 
$\mathcal{F}_{2n,z}$ is the direct system consisting of nondropping countable iterates of $M_{2n}^{\#}(z)$. $M_{2n,\infty}^{\#}(z)$ is the direct limit of $\mathcal{F}_{2n,z}$.  $M_{2n,\infty}^{-} (z)= M_{2n,\infty}^{\#}(z)|\bolddelta{2n+1}$. $(\mathcal{F}_{2n},\mathcal{M}_{2n,\infty}^{\#}, \mathcal{M}_{2n,\infty}^{-}) = (\mathcal{F}_{2n,0}, \mathcal{M}_{2n,\infty}^{\#}(0), \mathcal{M}_{2n,\infty}^{-}(0))$.
\begin{mytheorem}
  \label{thm:steel}
  Assuming $\boldpi{2n+1}$-determinacy. Assume $z$ is a real.
  \begin{enumerate}
  \item $\bolddelta{2n+1}$ is the least $<\delta_{2n,\infty}^z$-strong cardinal of $M_{2n,\infty}^{\#}(z)$, where $\delta_{2n,\infty}^z$ is the least Woodin cardinal of $M_{2n,\infty}^{\#}(z)$.
  \item $M_{2n,\infty}^{-}(z) = L_{\bolddelta{2n+1}}[T_{2n+1},z]$.
  \end{enumerate} 
\end{mytheorem}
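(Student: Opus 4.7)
The plan is to generalize Steel's analysis of $\mathrm{HOD}^{\lr}$ to the direct limit system built from $M_{2n}^{\#}(z)$ under $\boldpi{2n+1}$-determinacy.

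For part~(1), I would first observe that the direct-limit maps in $\mathcal{F}_{2n,z}$ send, from each countable nondropping iterate $\mathcal{N}$, the unique Woodin cardinal $\delta^{\mathcal{N}}$ of $\mathcal{N}$ to the least Woodin cardinal $\delta_{2n,\infty}^z$ of $M_{2n,\infty}^{\#}(z)$, by coherence of iteration maps. To identify $\bolddelta{2n+1}$ as the least $<\delta_{2n,\infty}^z$-strong cardinal, I would carry out a generic-comparison-style computation: ordinals below $\bolddelta{2n+1}$ appear as direct-limit images of iterates of the least $<$-Woodin strong of $M_{2n}^{\#}(z)$, and $<$-Woodin strength is preserved into the limit to yield a $<\delta_{2n,\infty}^z$-strong at $\bolddelta{2n+1}$. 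Minimality and the absence of additional such cardinals below $\bolddelta{2n+1}$ follow from mouse-minimality of $M_{2n}^{\#}(z)$.

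For part~(2), there are two inclusions. For $L_{\bolddelta{2n+1}}[T_{2n+1},z] \subseteq M_{2n,\infty}^{-}(z)$ it suffices to show $T_{2n+1} \in M_{2n,\infty}^{-}(z)$. The Moschovakis tree $T_{2n+1}$ is the tree of a $\Pi^1_{2n+1}$-scale, which can be recovered from a Woodin cardinal together with a $<$-Woodin strong below it via $\Sigma^1_{2n+2}$ generic absoluteness; since $M_{2n,\infty}^{\#}(z)$ supplies exactly these ingredients below $\bolddelta{2n+1}$ by part~(1), the tree is internally definable and lies in $M_{2n,\infty}^{-}(z)$. Conversely, for $M_{2n,\infty}^{-}(z) \subseteq L_{\bolddelta{2n+1}}[T_{2n+1},z]$, the countable nondropping iterates of $M_{2n}^{\#}(z)$ and their iteration maps can be read off from $T_{2n+1}$ via a $\Delta^1_{2n+2}$ analysis of iterability, so $\mathcal{F}_{2n,z}$ and hence the initial segment of its direct limit up to $\bolddelta{2n+1}$ are definable inside $L_{\bolddelta{2n+1}}[T_{2n+1},z]$.

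The principal obstacle is this last step: rigorously encoding the countable iteration strategy of $M_{2n}^{\#}(z)$ into $T_{2n+1}$ so that $L_{\bolddelta{2n+1}}[T_{2n+1},z]$ can reconstruct the direct limit. This is the genuine generalization to $n>0$ of Steel's core argument and requires tracking how the Moschovakis scale—built via the third periodicity theorem at the $(2n+1)$-st level—codes comparison information about the higher mice involved, which is where the bulk of the technical work goes.
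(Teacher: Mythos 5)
A preliminary point: the paper offers no proof of this theorem at all. It is recalled as background --- ``Steel's computation of $\mathbb{L}[T_{2n+1}]$'' --- with the substance living in \cite{steel_projective_wo_1995}, so there is no in-paper argument to measure yours against. Judged on its own terms, your sketch does have the right overall shape of Steel's argument (identify $\bolddelta{2n+1}$ with the least $<\delta_{2n,\infty}^z$-strong via direct-limit images, then prove the two inclusions of part (2) by mutual definability of the tree and the system $\mathcal{F}_{2n,z}$), and you correctly flag the reconstruction of the direct limit inside $L_{\bolddelta{2n+1}}[T_{2n+1},z]$ as a genuinely hard step.

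There is, however, a concrete gap in your first inclusion of part (2). You propose to get $T_{2n+1}\in M_{2n,\infty}^{-}(z)$ by ``recovering'' the tree from a Woodin cardinal with a $<$-Woodin strong below it via $\Sigma^1_{2n+2}$ generic absoluteness. That machinery produces \emph{some} tree in the model projecting to the universal $\Pi^1_{2n+1}$ set (e.g.\ a homogeneously Suslin representation); it does not produce the \emph{specific} Moschovakis tree $T_{2n+1}$, and the theorem is about that specific tree --- for an arbitrary tree $T'$ with $p[T']=p[T_{2n+1}]$ there is no a priori reason that $L_{\bolddelta{2n+1}}[T',z]=L_{\bolddelta{2n+1}}[T_{2n+1},z]$. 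What is actually required is a level-by-level computation of the Moschovakis norms (built by periodicity from the $\Sigma^1_{2n}$ scale) as direct-limit images of ordinals definable over iterates of $M_{2n}^{\#}(z)$, tied to the Dodd--Jensen prewellordering on the relevant iterable premice; this same analysis is what drives the reverse inclusion and part (1). Relatedly, in part (1) the appeal to ``mouse-minimality'' does not by itself show that the images of ordinals below the least $<\delta$-strong of the iterates are both \emph{cofinal in} and \emph{bounded by} $\bolddelta{2n+1}$: the cofinality direction needs that every $\boldDelta{2n+1}$ prewellordering is captured by such a mouse, and the boundedness direction needs a Kunen--Martin-style bound on the Dodd--Jensen prewellordering. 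These are the places where the proof actually lives, and the sketch leaves all of them open.
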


The notations concerning inner model theory follow \cite{steel-handbook}. If $\mathcal{M}$ is a premouse, $o(\mathcal{M})$ denotes $\ord \cap \mathcal{M}$. $\mathcal{M} \inisegeq \mathcal{N}$ means that $M$ is an initial segment of $\mathcal{N}$. 
In Steel \cite{steel_projective_wo_1995}, the level-wise projective complexity associated to mice is discussed in detail. 
In this paper, we find it more convenient to work with $\Pi^1_{n+1}$-iterability rather than $\Pi_n^{HC}$-iterability in \cite{steel_projective_wo_1995}. 

We recall the level-wise complexity of projective mice in \cite{steel_projective_wo_1995}. 
A premouse is by definition $\Pi^1_1$-iterable and also $\Pi^1_1$-iterable above any ordinal in the premouse. 
A countable normal iteration tree $\mathcal{T}$ on a countable premouse is $\Pi^1_{2k+1}$-guided iff for any limit $\lambda \leq \lh(\mathcal{T})$, there is $\xi \leq o( \mathcal{M}_{\lambda}^{\mathcal{T}})$ such that $\mathcal{M}_{\lambda}^{\mathcal{T}}| \xi $ is a $\Pi^1_{2k+1}$-iterable above $\mathcal{T}\res \alpha$ and $\rud(\mathcal{M}(\mathcal{T} \res \alpha))\models$``$\delta(\mathcal{T}\res \alpha)$ is not Woodin''. A countable stack of countable normal iteration trees $\vec{\mathcal{T}}$ is $\Pi^1_{2k+1}$-guided iff every normal component of $\vec{\mathcal{T}}$ is $\Pi^1_{2k+1}$-guided.


A countable premouse $\mathcal{P}$ is $\Pi^1_{2k+2}$-iterable above $\eta \in \mathcal{P}$ iff 
for any $\Pi^1_{2k+1}$-guided stack of normal iteration trees $\vec{\mathcal{T}} = (\mathcal{T}_i)_{i < \alpha}$ on $\mathcal{P}$ with critical points above $\eta$, either
\begin{enumerate}
\item the wellfounded model $\mathcal{M}_{\infty}^{\vec{\mathcal{T}}}$ exists, either as the last model of $\mathcal{T}_{\alpha-1}$ when $\alpha$ is a successor or as the direct limit of $(\mathcal{M}_i^{\mathcal{T}}: i < \alpha)$ when $\alpha$ is a limit. 
\item  $\alpha$ is a successor ordinal, $\lh(\mathcal{T}_{\alpha-1})$ is limit and for any $\mathcal{N}\lengtheneq \mathcal{M}(\mathcal{T}_{\alpha})$ which is sound above $\mathcal{M}(\mathcal{T}_{\alpha})$, projects to $\mathcal{M}(\mathcal{T}_{\alpha})$, has $o(\mathcal{M}(\mathcal{T}_{\alpha}))$ as a strong cutpoint and is $\Pi^1_{2k+1}$-iterable above $o(\mathcal{M}(\mathcal{T}_{\alpha}))$, there is a cofinal branch $b$ through $\mathcal{T}_{\alpha}$ such that either  $\mathcal{N} \inisegeq \mathcal{M}_b^{\mathcal{T}}$ or $\mathcal{M}_b^{\mathcal{T}} \inisegeq \mathcal{N}$. 
\end{enumerate}
$\Pi^1_{2k+2}$-iterability above $\eta$ is enough to compare countable ($2k+1$)-small premice that project to $\eta$, agree below $\eta$, and have $\eta$ as a strong cutpoint. A countable normal iteration tree $\mathcal{T}$ on a countable premouse is $\Pi^1_{2k+2}$-guided iff for any limit $\lambda \leq \lh(\mathcal{T})$, there is $\xi \leq o( \mathcal{M}_{\lambda}^{\mathcal{T}})$ such that $\mathcal{M}_{\lambda}^{\mathcal{T}}| \xi$ is $\Pi^1_{2k+2}$-iterable above $\delta(\mathcal{T} | \lambda)$ and $\rud(\mathcal{M}(\mathcal{T} \res \alpha))\models$``$\delta(\mathcal{T}\res \alpha)$ is not Woodin''. A countable stack of countable normal iteration trees $\vec{\mathcal{T}}$ is $\Pi^1_{2k+2}$-guided iff every normal component of $\vec{\mathcal{T}}$ is $\Pi^1_{2k+2}$-guided. 

Assume $\boldDelta{2k+2}$-determinacy. $x \in \mathbb{R}$ codes a $\Pi^1_{2k+3}$-iterable mouse above $\eta$ iff $x $ codes a countable ($2k+2$)-small premouse $\mathcal{P}_x$ and $\eta \in \mathcal{P}_x$ such that for any $v \in \mathbb{R}$ coding $\Pi^1_{2k+2}$-guided stack of normal iteration trees $\vec{\mathcal{T}}  = (\mathcal{T}_i)_{i < \alpha}$ on $\mathcal{P}_x$ with critical points above $\eta$, either
\begin{enumerate}
\item the wellfounded model $\mathcal{M}_{\infty}^{\vec{\mathcal{T}}}$ exists, either as the last model of $\mathcal{T}_{\alpha-1}$ when $\alpha$ is a successor or as the direct limit of $(\mathcal{M}_i^{\mathcal{T}}: i < \alpha)$ when $\alpha$ is a limit, and there is $\mathcal{Q} \lengthen \mathcal{M}_{\infty}^{\vec{\mathcal{T}}}$ such that $\mathcal{Q} \in M_{2k+1}^{\#}(x,v)$, $\mathcal{Q}$ is $\Pi^1_{2k+2}$-iterable above $o(\mathcal{M}_{\infty}^{\vec{\mathcal{T}}})$, $\rud(\mathcal{Q}) \models $``there is no Woodin cardinal $\leq o(\mathcal{M}_{\infty}^{\vec{\mathcal{T}}})$'', or
\item  $\alpha$ is a successor ordinal and there is $b \in M_1^{\#}(x,v)$ such that $b$ is a maximal branch through $\mathcal{T}_{\alpha-1}$, and there is $\mathcal{Q}  \lengthen \mathcal{M}_b^{\mathcal{T}_{\alpha-1}}$ such that  $\mathcal{Q} \in M_{2k+1}^{\#}(x,v)$, $\mathcal{Q}$ is $\Pi^1_{2k+2}$-iterable above $o(\mathcal{M}_{b}^{\mathcal{T}_{\alpha-1}})$, $\rud(\mathcal{Q}) \models $``there is no Woodin cardinal $\leq o(\mathcal{M}_{b}^{\mathcal{T}_{\alpha-1}})$''.
\end{enumerate}
$\Pi^1_{2k+3}$-iterability is a $\Pi^1_{2k+3}$ property by restricted quantification  \cite[4D.3]{mos_dst}. 
``countable'' and ``($2k+2$)-small'' are usually omitted from prefixing ``$\Pi^1_{2k+3}$-iterable mouse''. 
Note that $\Pi^1_{2k+3}$-iterable mice are genuinely $(\omega_1,\omega_1)$-iterable.

$\leq_{DJ}$ is the Dodd-Jensen prewellordering on $\Pi^1_{2k+3}$-iterable mice.  $\mathcal{M}\leq_{DJ} \mathcal{N}$ iff $\mathcal{M},\mathcal{N}$ are $\Pi^1_{2k+3}$-iterable mice and in the comparison between $\mathcal{M}$ and $\mathcal{N}$, the main branch on the $\mathcal{M}$-side does not drop. $\mathcal{M}\sim_{DJ} \mathcal{N}$ iff $\mathcal{M} \leq_{DJ} \mathcal{N} \leq_{DJ} \mathcal{M}$. $\mathcal{M} <_{DJ} \mathcal{N}$ iff $\mathcal{M} \leq_{DJ} \mathcal{N} \nleq_{DJ} \mathcal{M} $.
The norm $x \mapsto \wocode{\mathcal{P}_x}_{<_{DJ}}$ for $x$ coding a $\Pi^1_{2k+3}$-iterable mouse $\mathcal{P}_x$ is $\Pi^1_{2k+3}$. For instance,  $(\mathcal{P}_x$ is a $\Pi^1_{2k+3}$-iterable mouse $\wedge (\mathcal{P}_y $ is a $\Pi^1_{2k+3}$-iterable mouse $\to \mathcal{P}_x \leq_{DJ} \mathcal{P}_y ))$ iff $\mathcal{P}_x$ is a $\Pi^1_{2k+3}$-iterable mouse and for any $\Pi^1_{2k+2}$-guided normal iteration trees $\mathcal{T}, \mathcal{U}$ on $\mathcal{P}_x, \mathcal{P}_y$ respectively, if $\mathcal{T},\mathcal{U}$ have the common last model $\mathcal{Q}$ and the main branch of $\mathcal{T}$ drops, then the main branch of $\mathcal{U}$ also drops. 

If $\mathcal{N}$ is a $\Pi^1_{2k+3}$-iterable mouse, then $\mathcal{I}_{\mathcal{N}}$ is the direct system consisting of countable nondropping iterates of $\mathcal{N}$, and $\mathcal{N}_{\infty}$ is the direct limit of $\mathcal{I}_{\mathcal{N}}$, $\pi_{\mathcal{N},\infty}: \mathcal{N} \to \mathcal{N}_{\infty} $ is the direct limit map. $o(\mathcal{N}_{\infty})< \bolddelta{2k+3}$ as it is the length of a $\boldDelta{2k+3}$-prewellordering. 

For a real $z$, all the iterability notions relativize to $z$-mice. $<_{DJ(z)}$ is the Dodd-Jensen prewellordering on $\Pi^1_{2k+3}$-iterable $z$-mice.




\subsection{Kunen's analysis on subsets of $u_\omega$}
\label{sec:kunens-analys-subs}

Kunen's $\boldDelta{3}$-coding of subsets of $u_{\omega}$ under AD has an effective version under $\boldDelta{3}$-determinacy. 
\begin{mytheorem}[Kunen \cite{sol_delta13coding}]
  \label{thm:Delta13_coding}
Assume $\boldDelta{3}$-determinacy.  There is  $\Delta^1_3$ set $X \subseteq \mathbb{R} \times u_{\omega}$  such that $\set{X_v}{v \in \mathbb{R}} = \power(u_{\omega}) \cap \admistwobold$. Here $X_v = \set{\alpha < u_{\omega}}{ (x, \alpha) \in X}$. 
\end{mytheorem}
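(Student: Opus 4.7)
My plan is to combine the Becker--Kechris--Martin representation (Theorem~\ref{thm:BK-KM}), the $\Pi^1_3$ coding system of Corollary~\ref{coro:Pi13_index_for_Delta13}, and the observation from the end of Section~\ref{sec:q-theory} that every subset of $u_{\omega}$ in $\admistwobold$ is boldface $\Delta^1_3$.

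The main content is to show that every $A \in \power(u_{\omega}) \cap \admistwobold$ is boldface $\Delta^1_3$, with codes uniformly accessible by a real. Given such $A$, I would pick $x \in \mathbb{R}$ and $\beta < \bolddelta{3}$ with $A \in L_{\beta}[T_2, x]$ and fix a $\Sigma_1$-definition of $A$ over $L_{\beta}[T_2, x]$ using finitely many ordinal parameters. Using $\bolddelta{3} = \sup_z \lambda_3^z$, I would enlarge $x$ to a real $y$ over which $\beta$ and each ordinal parameter lies below $\lambda_3^y$, and fold into $y$ real representatives in $\Delta^1_3(y)$-prewellorderings witnessing each parameter, thereby promoting them to $\Delta_1$-definable ordinals over $L_{\kappa_3^y}[T_2, y]$ from $\se{T_2, y}$. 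Then $A$ becomes $\Sigma_1$-definable over $L_{\kappa_3^y}[T_2, y]$ from $\se{T_2, y}$ with only $\alpha$ free, so $A \in \Pi^1_3(y)$ by Theorem~\ref{thm:BK-KM}; applying the same argument (using $\Pi_1$-definability of the complement over the admissible structure) to $u_{\omega} \setminus A$ yields $y' \in \mathbb{R}$ with $u_{\omega} \setminus A \in \Pi^1_3(y')$.

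To package this into a single $\Delta^1_3$ set, I would follow the pattern of Corollary~\ref{coro:Pi13_index_for_Delta13} with $u_{\omega}$-sections in place of $\mathbb{R}$-sections. Fixing a good universal $\Pi^1_3$ set $V \subseteq \omega \times \mathbb{R}^2$, I would set
\[
A_{n, x} \DEF \set{\alpha < u_{\omega}}{\forall w \in \WO_{\omega} \,(\sharpcode{w} = \alpha \to (n, x, w) \in V)},
\]
a $\Pi^1_3(x)$ subset of $u_{\omega}$ which exhausts all $\Pi^1_3(x)$-subsets of $u_{\omega}$. Viewing a real $v = \corner{n, x, m, y}$ as a pair of such codes, I would form the $\Pi^1_3$ sets $A(v, \alpha) \eqiv \alpha \in A_{n, x}$ and $B(v, \alpha) \eqiv \alpha \in A_{m, y}$, reduce them by Corollary~\ref{coro:Pi13_reduction} to disjoint $A', B'$ with $A' \cup B' = A \cup B$, and put $C' \DEF \set{v \in \mathbb{R}}{A'_v \cup B'_v = u_{\omega}}$, which is $\Pi^1_3$ by Corollary~\ref{coro:Kechris_Martin_closure_under_quantification}. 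Finally I would set $X \DEF \set{(v, \alpha) \in \mathbb{R} \times u_{\omega}}{v \in C' \wedge \alpha \in A'_v}$; this is $\Pi^1_3$ directly and $\Sigma^1_3$ via the equivalent expression $v \in C' \wedge \alpha \notin B'_v$ (since on $C'$ the sections $A'_v, B'_v$ partition $u_{\omega}$), so $X \in \Delta^1_3$. Each $X_v$ is either empty or boldface $\Delta^1_3$ and hence in $\admistwobold$; and by the previous paragraph every $A \in \power(u_{\omega}) \cap \admistwobold$ equals $X_v$ for some $v = \corner{n, y, m, y'}$ coding the $\Pi^1_3(y)$- and $\Pi^1_3(y')$-definitions of $A$ and $u_{\omega} \setminus A$.

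The main obstacle is the parameter absorption in the second paragraph: promoting the ordinal parameters of the $\Sigma_1$-definition of $A$ to $\Delta_1$-definable ordinals over $L_{\kappa_3^y}[T_2, y]$ by adjoining real witnesses to $y$, so that Theorem~\ref{thm:BK-KM} applies to $A$ directly without residual ordinal parameters. Once this is arranged, the rest is a routine transplantation of the reduction and quantifier-contraction machinery of Section~\ref{sec:q-theory} to $u_{\omega}$-sections.
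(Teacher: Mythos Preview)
Your complexity claim for $X$ is wrong. You argue that $X = \set{(v,\alpha)}{v \in C' \wedge \alpha \in A'_v}$ is $\Sigma^1_3$ via the equivalent expression $v \in C' \wedge \alpha \notin B'_v$. But $C'$ is only $\Pi^1_3$, not $\Sigma^1_3$: the condition $A'_v \cup B'_v = u_{\omega}$ asserts that two $\Pi^1_3(v)$ sets cover $u_{\omega}$, and such ``totality'' conditions are $\Pi^1_3$-complete (just as $\set{(e,e')}{U_e \cup U_{e'} = \mathbb{R}}$ is $\Pi^1_3$-complete for $U$ universal $\Pi^1_3$). Hence $v \in C' \wedge \alpha \notin B'_v$ is a conjunction of a genuine $\Pi^1_3$ condition with a $\Sigma^1_3$ one, which is not $\Sigma^1_3$. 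The reduction trick you transplant from Corollary~\ref{coro:Pi13_index_for_Delta13} only produces a $\Pi^1_3$ set of codes together with a $\Pi^1_3$/$\Sigma^1_3$ pair $(P,S)$ agreeing on that set; it does not produce a single $\Delta^1_3$ set.

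There is a second red flag: every tool you invoke (Theorem~\ref{thm:BK-KM}, Corollaries~\ref{coro:Kechris_Martin_closure_under_quantification}--\ref{coro:Pi13_index_for_Delta13}) holds under $\boldDelta{2}$-determinacy. If your argument worked it would prove the theorem from $\boldDelta{2}$-determinacy, whereas the paper explicitly records that it does not know how to avoid $\boldDelta{3}$-determinacy. The paper's route is genuinely different: it follows Kunen's original argument in \cite{sol_delta13coding}, whose critical step (the analog of \cite[Lemma~3.7]{sol_delta13coding}) is a game played on Turing degrees and requires $\boldDelta{3}$-Turing-determinacy. That game is what manufactures a set that is simultaneously $\Pi^1_3$ and $\Sigma^1_3$; the purely syntactic reduction machinery of Section~\ref{sec:q-theory} does not suffice.
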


The proof in \cite{sol_delta13coding} generalizes easily. 
The only difference is that instead of taking a surjection $h : \mathbb{R} \to \power(u_{\omega})$ in \cite[Lemma 3.7]{sol_delta13coding} under AD by Moschovakis Coding Lemma, we take a surjection $h : \mathbb{R} \to \power(u_{\omega}) \cap \mathbb{L}_{\bolddelta{3}}[T_2]$, where $G$ is a universal $\Pi^1_3$ subset of $\mathbb{R} \times u_{\omega}$ and $h(z) = G_z = \set{\alpha}{(z,\alpha) \in G}$. Surjectivity of $h$ follows from the fact that every subset of $u_{\omega}$ in $\mathbb{L}_{\bolddelta{3}}[T_2]$ is $\boldpi{3}$. In fact, every subset of $u_{\omega}$ in $\mathbb{L}_{\bolddelta{3}}[T_2]$ is $\boldDelta{3}$. The critical step in the proof of Theorem~\ref{thm:Delta13_coding} corresponds to  \cite[Lemma 3.7]{sol_delta13coding}. This step works under $\boldDelta{3}$-Turing determinacy. This is why 
$\boldDelta{3}$-determinacy is an assumption in Theorem~\ref{thm:Delta13_coding}. We don't know if it can be weakened to $\boldDelta{2}$-determinacy. 


\section{More on the level-1 analysis}
\label{sec:level-1-embedding}

We present the usual arguments of Martin's proof of $\boldpi{1}$-determinacy  in a form that conveniently generalizes to higher levels. 

\subsection{The tree $S_1$, level-1 description analysis}
\label{sec:level-1-tree}
We are working under $\text{ZF}+\text{DC}$. 

The technical definition of  \emph{tree of uniform cofinalities} is extracted from \cite{kechris_homo_tree_cabal}, defined in \cite{cabal2_intro_jackson}, and redefined in our paper in a more convenient way. A tree of uniform cofinality pinpoints a particular measure that appears in a homogeneity system for a projective set.
A \emph{level-1 tree of uniform cofinalities}, or a \emph{level-1 tree}, is a set $P \subseteq \omega^{<\omega}$ such that: 
\begin{enumerate}
\item $\emptyset \notin A$.
\item If $(i_1,\ldots,i_{k+1}) \in T$, $k\geq 1$, then $(i_1,\ldots,i_k) \in T$ and for every $j < i_{k+1}$, $(i_1,\ldots,i_k,j) \in T$.
\end{enumerate}
Any countable linear ordering is isomorphic to $<_{BK} \res P$ for some level-1 tree $P$. If $P,P'$ are finite level-1 trees, $s \notin P$, $P'=P \cup \se{s}$, then the $<_{BK} \res P'$-predecessor of $s^{-}$ is $s$. Level-1 trees  are just convenient representations of countable linear orderings and their extensions. 

 A level-1 tree $P$ is said to be \emph{regular} iff $(1) \notin P$. In other words, when $P$ is regular and $P \neq \emptyset$, $(0)$ must be the $<_{BK}$-maximal node of $P$.

The \emph{ordinal representation} of $P$ is
\begin{displaymath}
  \rep(P) = \set{(p)}{p \in P} \cup \set{(p,n)}{p \in P, n<\omega}.
\end{displaymath}
$\rep(P)$ is endowed with the ordering
\begin{displaymath}
  <^P = <_{BK} \res \rep(P). 
\end{displaymath}
Thus, for $p \in P$, $(p)$ is the $<^P$-supremum of $(p,n)$ for $n<\omega$. If $B \subseteq \omega_1$ is in $\mathbb{L}$, let $B ^{P \uparrow}$ the set of functions 
 $f : \rep(P) \to B$ which are continuous, order preserving (with respect to $<^P$ and $<$) and belong to $\mathbb{L}$.
 If $f \in \omega_1^{P \uparrow}$, let
\begin{displaymath}
  [f]^P = ([f]^P_p)_{p \in P},
\end{displaymath}
where  $[f]^P_p = f((p))$ for $p \in P$. 
 Let $[B]^{P\uparrow}= \set{[f]^P}{f \in B^{P \uparrow}}$. 
 $P$ is said to be  \emph{$\Pi^1_1$-wellfounded}   iff  $P \cup \se{\emptyset}$ is a wellfounded tree, or equivalently, $<^P$ is a wellordering. $\Pi^1_1$-wellfoundedness of a level-1 tree is a $\boldpi{1}$ property in the real coding the tree. 
A tuple $\vec{\alpha}=({\alpha}_p)_{p \in P}$ is said to \emph{respect} $P$ iff  $\vec{\alpha}\in [\omega_1]^{P \uparrow}$.  
In other words,  each ${\alpha}_p$ is a countable limit ordinal, and the map $p \mapsto {\alpha}_p$ is an isomorphism between $(P; <_{BK} \res P)$ and $(\se{{\alpha}_p: p\in P}; <)$. In particular, when $P$ is regular, $P \neq \emptyset$ and $\vec{\alpha}$ respects $P$, then ${\alpha}_{(0)}> {\alpha}_p$ whenever  $p \in {P} \setminus \se{(0)}$. 


%

A \emph{finite level-1 tower}  is  a tuple $(P_i)_{i \leq n}$ such that  $ n<\omega$,  $P_i$ is a  level-1 tree of cardinality $i$ for any $ i $, and $i<j \to P_i\subseteq P_j$. An \emph{infinite level-1 tower} is $(P_i)_{i < \omega}$ such that $(P_i)_{i \leq n}$ is a finite level-1 tower for any $n<\omega$. 
A \emph{level-1 system} is a sequence $\vec{P} = (P_s)_{s \in \omega^{<\omega}}$ such that for each $s \in \omega^{<\omega}$, $(P_{s\res i})_{i<\lh(s)}$ is a finite level-1 tower. $\vec{P}$ is \emph{regular} iff each  $P_s$ is regular.
Associated to a $\boldpi{1}$ set $A$ we can assign a regular level-1 system  $(P_s)_{s \in \omega^{<\omega}}$ so that   $x\in A$ iff the  infinite  regular level-1 tree $ {P}_x \DEF  \cup_{n<\omega} {P}_{x \res n}$ is $\Pi^1_1$-wellfounded. If $A$ is lightface $\Pi^1_1$, then $(P_s)_{s \in \omega^{<\omega}}$   can be picked effective.  

\begin{mydefinition}
  \label{def:S_1}
$S_1$ is the tree on $V_{\omega} \times \omega_1$ such that $(\emptyset,\emptyset) \in S_1$ and a nonempty node
  \begin{displaymath}
    (\vec{P}, \vec{\alpha} )  = ( (P_i)_{ i\leq n}, (\alpha_i)_{ i \leq n}  ) \in S_1
  \end{displaymath}
iff  $(P_i)_{ i \leq n}$ is a finite regular level-1 tower and putting $p_i \in P_{i+1}\setminus P_i$,  $\beta_{p_i} = \alpha_{i}$, then  $(\beta_p)_{p \in P_n}$  respects $P_n$.
\end{mydefinition}
Since every tree occurring in $S_1$ is regular, for a nonempty node $(\vec{P},\vec{\alpha}) \in S_1$, we must have $\alpha_0 > \max(\alpha_1,\ldots,\alpha_n)$.  

$S_1$ projects to the universal $\Pi^1_1$ set:
 \begin{displaymath}
   p[S_1] = \set{\vec{P}}{\vec{P} \text{ is a $\Pi^1_1$-wellfounded regular level-1 tower} }.
 \end{displaymath}
The (non-regular) $\omega_1$-scale associated to $S_1$ is $\Pi^1_1$.

\begin{mydefinition}
  \label{def:factoring_1}
 \begin{enumerate}
 \item   Suppose $P$ is a level-1 tree. 
  The set of  \emph{$P$-descriptions} is $\desc(P) \DEF P \cup \se{\emptyset}$. The \emph{constant $P$-description} is $\emptyset$. 
 \item $p \prec p'$ iff $p,p' \in \desc(P)$  and $p <_{BK}p'$. 
 \item Suppose $P,W$ are level-1 trees. A function $\sigma : P \cup \se{\emptyset} \to W \cup \se{\emptyset}$ is said to \emph{factor} $(P,W)$ 
 iff  $\sigma(\emptyset) = \emptyset$ and   ${\sigma}$ preserves the $<_{BK}$-order. (${\sigma}$ does not necessarily preserve the tree order.)
\item Suppose $P$ is a level-1 tree.  $\sigma$  factors $(P,*)$ iff $\sigma$ factors $(P,W)$ for some level-1 tree $W$. 
 \end{enumerate}
\end{mydefinition}

Suppose $P,W$ are $\Pi^1_1$-wellfounded. Then $\ot(<^P) \leq \ot(<^W)$ is equivalent to ``$\exists \sigma~(\sigma$ factors $(P,W))$''.  $\ot(<^P) < \ot(<^W)$ is equivalent to ``$\exists \sigma\exists w \in W~(\sigma$ factors $(P,W) \wedge \forall p \in {P}~\sigma(p) \prec^W w)$''. The higher level analog of this simple fact  will be established in the third paper of this series, which will be an ingredient in the axiomatization of $0^{3\#}$.

If $\sigma : \se{1,\ldots,n} \to \se{1,\ldots,n'}$ is order preserving, recall that
\begin{displaymath}
  j^{\sigma} : \mathbb{L} \to \mathbb{L}
\end{displaymath}
where $j^{\sigma}( \tau^{L[x]} (u_1,\ldots,u_n)) = \tau^{L[x]}(u_{\sigma(1)},\ldots,u_{\sigma(n)}) $. We let
\begin{displaymath}
  j^{\sigma}_{\sup} : u_{n+1} \to u_{n'+1}
\end{displaymath}
where $j^{\sigma}_{\sup} (\beta) = \sup (j^{\sigma})''\beta$. So $j^{\sigma}$ is continuous at $\beta$ iff $j^{\sigma}(\beta)  = j^{\sigma}_{\sup}(\beta)$. The continuity points of $j^{\sigma}$ are characterized by their $\mathbb{L}$-cofinalities:
\begin{mylemma}\label{lem:j_sigma_continuity}
Suppose $\sigma : \se{1,\ldots,n} \to \se{1,\ldots,n'}$ is order preserving, $\beta< u_{n+1}$. Put $\sigma(0) = 0$. Then
  $j^{\sigma}(\beta) \neq j^{\sigma}_{\sup}(\beta)$ iff for some $k$, $\cf^{\mathbb{L}}(\beta) = u_k$ and $\sigma(k)>\sigma(k-1)+1$.  If $\cf^{\mathbb{L}}({\beta}) =u_{k}$ and $\sigma(k)>\sigma(k-1)+1$, then $j^{\sigma}_{\sup}(\beta) = j ^{\sigma_k} \circ j^{\tau_k}_{\sup}(\beta)$, where $\sigma=\sigma_k \circ \tau_k$, $\sigma_k(i) = \sigma(i)$ for $1 \leq i < k$, $\sigma_k(k) = \sigma(k-1)+1$, $\sigma_k(i) = \sigma(i-1)$ for $k < i \leq n+1$. 
\end{mylemma}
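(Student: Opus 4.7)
The plan is to locate the discontinuities of $j^\sigma$ by reducing to a computation of the supremum of $j^\sigma$ on initial segments $u_k$, and then handle the factorization formula separately. First I would recall the standard fact that every $\beta < u_{n+1}$ has $\cf^{\mathbb{L}}(\beta) \in \{\omega\} \cup \{u_1,\ldots,u_n\}$, and dispose of the $\omega$ case: a countable $\mathbb{L}$-cofinal sequence in $\beta$ lies in some $L[y]$, so elementarity of $j^\sigma$ on $L[y]$ produces a countable cofinal sequence in $j^\sigma(\beta)$, yielding continuity.

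Next, suppose $\cf^{\mathbb{L}}(\beta) = u_k$, and fix $f \in \mathbb{L}$ strictly increasing, continuous, and cofinal from $u_k$ to $\beta$. By elementarity, $j^\sigma(f) : u_{\sigma(k)} \to j^\sigma(\beta)$ is strictly increasing, continuous, and cofinal, and satisfies $j^\sigma(f(\alpha)) = j^\sigma(f)(j^\sigma(\alpha))$ for $\alpha < u_k$. Hence $j^{\sigma}_{\sup}(\beta) = \sup \{j^\sigma(f)(j^\sigma(\alpha)) : \alpha < u_k\}$, which by continuity of $j^\sigma(f)$ equals $j^\sigma(f)(\xi)$, where $\xi = \sup \{j^\sigma(\alpha) : \alpha < u_k\}$. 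The lemma reduces to computing $\xi$.

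The central claim is $\xi = u_{\sigma(k-1)+1}$. The inclusion $\{j^\sigma(\alpha) : \alpha < u_k\} \subseteq u_{\sigma(k-1)+1}$ is immediate: each $\alpha < u_k$ has the form $\tau^{L[y]}(u_1,\ldots,u_{k-1})$, whence $j^\sigma(\alpha) = \tau^{L[y]}(u_{\sigma(1)},\ldots,u_{\sigma(k-1)}) < u_{\sigma(k-1)+1}$. For cofinality I would invoke the characterization $u_{m+1} = \sup\{(u_m)^{+L[y]} : y \in \mathbb{R}\}$, which holds because every ordinal $<u_{m+1}$ has $L[y]$-cardinality $\leq u_m$ for some real $y$. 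Taking $m = \sigma(k-1)$, the ordinals $(u_{\sigma(k-1)})^{+L[y]}$ are cofinal in $u_{\sigma(k-1)+1}$, and each of them equals $j^\sigma((u_{k-1})^{+L[y]})$ for the corresponding $y$, with $(u_{k-1})^{+L[y]} < u_k$. The dichotomy then falls out: $j^{\sigma}_{\sup}(\beta) = j^\sigma(f)(u_{\sigma(k-1)+1})$ equals $j^\sigma(\beta)$ precisely when $u_{\sigma(k-1)+1} = u_{\sigma(k)}$, i.e., when $\sigma(k) = \sigma(k-1)+1$.

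For the factorization formula in the discontinuity case, verify directly from the definitions that $\sigma = \sigma_k \circ \tau_k$, so $j^\sigma = j^{\sigma_k} \circ j^{\tau_k}$. Applying the analysis just completed to $\tau_k$, which satisfies $\tau_k(k) = k+1 > k = \tau_k(k-1)+1$, gives $j^{\tau_k}_{\sup}(\beta) = j^{\tau_k}(f)(u_k)$. Then elementarity of $j^{\sigma_k}$ on a suitable $L[y]$ containing $j^{\tau_k}(f)$ yields $j^{\sigma_k}(j^{\tau_k}(f)(u_k)) = j^{\sigma_k}(j^{\tau_k}(f))(j^{\sigma_k}(u_k)) = j^\sigma(f)(u_{\sigma(k-1)+1}) = j^{\sigma}_{\sup}(\beta)$, as required. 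The main obstacle is the cofinality claim $\xi = u_{\sigma(k-1)+1}$; once that is in hand, the rest is bookkeeping with Skolem terms, $\mathbb{L}$-cofinal continuous maps, and the compositional behavior of $j^\sigma$.
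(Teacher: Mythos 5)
The paper offers no proof of this lemma; it is dismissed as ``essentially part of effectivized Kunen's analysis on $u_\omega$'' with a citation. Your argument is correct and is precisely the standard one that citation points to: reduce to computing $\sup (j^\sigma)''u_k$ via a continuous cofinal $f:u_k\to\beta$ in some $L[y]$, identify that supremum as $u_{\sigma(k-1)+1}$ using Solovay's facts that every $\alpha<u_k$ is $\tau^{L[y]}(u_1,\ldots,u_{k-1})$ and that $u_{m+1}=\sup_y (u_m)^{+L[y]}$, and then read off both the continuity criterion and the factorization $j^{\sigma}_{\sup}=j^{\sigma_k}\circ j^{\tau_k}_{\sup}$. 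The only steps left tacit (that a term value in indiscernibles $\le u_{\sigma(k-1)}$ which is an ordinal lies below $u_{\sigma(k-1)+1}$, and that $j^{\sigma_k}\circ j^{\tau_k}=j^{\sigma}$) are standard consequences of remarkability and the definition of $j^\sigma$, so no gap.
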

The second half of this lemma states that $j^{\sigma}_{\sup}$ acting on points of $\mathbb{L}$-cofinality $u_{k}$ is factored into the ``continuous part'' $j^{\sigma_k}$ and the ``discontinuous part'' $j^{\tau_k}_{\sup}$. 
This simple fact about factoring $j^{\sigma}_{\sup}$ is essentially part of effectivized Kunen's analysis on $u_{\omega}$ in  \cite{sol_delta13coding}.

\subsection{Homogeneity properties of $S_1$}
\label{sec:level-1-sharp}

From now on,  we assume $\boldpi{1}$-determinacy. This is equivalent to  $\forall x \in \mathbb{R} (x^{\#} \text{ exists})$ by Martin \cite{martin_pi11_det} and  Harrington \cite{harr_sharp_det}.

The first $\omega$ uniform indiscernibles  $(u_n)_{n<\omega}$ can be generated by restricted ultrapowers of $\mathbb{L}$. 
Recall that $\mathbb{L} = \bigcup_{x\in \mathbb{R}}L[x]$, which is admissibly closed. Then for every subset $A\subseteq \omega_1$ in $\mathbb{L}$, there is a real $x$ such that $A$ is $\Sigma_1$-definable over $(L_{\omega_1}[x];\in, x)$. 
Let
\begin{displaymath}
\mu_{\mathbb{L}}
\end{displaymath}
be the $\mathbb{L}$-club measure on $\omega_1$, i.e., $A \in \mu_{\mathbb{L}}$ iff $A \in \mathbb{L}$ and $\exists C \in \mathbb{L}~(C \subseteq A \wedge C\text{ is a club in } \omega_1)$. 
 When $P$ is a finite level-1 tree, $ \mu^P$ is the $\mathbb{L}$-measure on $\card(P)$-tuples in $\omega_1$ given by: $ A \in \mu^P $ iff there is $C \in \mu_{\mathbb{L}}$ such that $[C]^{P\uparrow} \subseteq A$. So $\mu^P$ is essentially a variant of  the $\card(P)$-fold product of $\mu_{\mathbb{L}}$, concentrating on tuples whose ordinals are ordered according to the $<_{BK}$-order of $P$. In particular, $\mu^{\emptyset}$ is the principal ultrafilter concentrating on $\se{\emptyset}$.  Put $j^P = j^{\mu^P}_{\mathbb{L}}$, $[f]_{\mu^P} = [f]^{\mu^P}_{\mathbb{L}}$ for $f \in \mathbb{L}$. 
  Standard arguments show that $\ult(\mathbb{L},\mu^P)=\mathbb{L}$, and $j^P (\omega_1) = u_{\card(P)+1}$. For any real $x$, $j^P\res L[x]$ is elementary from $L[x]$ to $L[x]$.

The set of  uncountable $\mathbb{L}$-regular cardinals below $u_{\omega}$ is $\set{u_n}{1 \leq n < \omega}$. The relation ``$\beta = \cf^{\mathbb{L}}(\alpha)$'' is $\Delta^1_3$ (in the sharp codes). Suppose $P$ is a finite level-1 tree, $p \in \desc(P)$. Then
\begin{displaymath}
  \seed_p^P \in  \mathbb{L}
\end{displaymath}
is the element represented modulo $\mu^P$ by the projection map sending  $\vec{\alpha} = (\alpha_{p'})_{p' \in P} $ to $\alpha_p$ if $p \in P$, by the constant function with value $\omega_1$ if $p=\emptyset$. We have $\seed_p^P=u_{ \wocode{p}_{\prec^P}+1}$, where $\wocode{p}_{\prec^P}$ is the $\prec^P$-rank of $p$. In particular, $\seed^P_{\emptyset} = u_{\card(P)+1}=j^P(\omega_1)$. 
For each $p \in P$, $\mu^P$ projects to $\mu_{\mathbb{L}}$ via the map $\vec{\alpha} \mapsto \alpha_p$. 
\begin{displaymath}
  p^P : \mathbb{L} \to \mathbb{L}
\end{displaymath}
is the induced factoring map that sends $j_{\mu_{\mathbb{L}}}(h)(\omega_1)$ to $j^P(h)(\seed_p^P)$. Thus, $p^P$ is the unique map such that for any $z \in \mathbb{R}$, $p^P$ is elementary from $L[z]$ to $L[z]$ and $p^P \circ j_{\mu_{\mathbb{L}}} = j^P$, $p^P(\omega_1) = \seed_p^P$.
 If  $p$ is the $\prec^P$-predecessor of $p'$, then $(p^P)'' u_2$ is a cofinal subset of $\seed_{p'}^P$.
Put
\begin{displaymath}
  \seed^P = (\seed_p^P)_{p \in \desc(P)},
\end{displaymath}
So $p \prec^P p'$ iff $\seed_p^P < \seed_{p'}^P$. Every element in $\mathbb{L}$ is expressible in the form $j^P(h)(\seed^P)$ for some $h \in \mathbb{L}$. 

If $P,P'$ are finite level-1 trees, $P$ is a subtree of $P'$,  then 
 $\mu^{P'}$ projects to $\mu^P$ in the language of Section~\ref{sec:N-homogeneous-trees}, i.e.,  the identity map factors $(P,P')$. Let
\begin{displaymath}
j^{{P},{P}'}  = j^{\mu^P,\mu^{P'}}_{\mathbb{L}} : \mathbb{L}\to \mathbb{L}
\end{displaymath} 
be the factor map given by Section~\ref{sec:N-homogeneous-trees} and let
\begin{displaymath}
  j^{P,P'}_{\sup} : u_{\omega} \to u_{\omega}
\end{displaymath}
be $j^{P,P'}_{\sup} (\alpha) = \sup (j^{P,P'})''\alpha$. Thus, for any real $x$,
\begin{displaymath}
  j^{P,P'}\res L[x] : L[x] \to L[x]
\end{displaymath}
is elementary and
\begin{displaymath}
   j^{P,P'}(\tau^{L[x]}(\seed^{P}_{p_1},\ldots,\seed^{P}_{p_n}))  = \tau^{L[x]}(\seed^{P'}_{p_1},\ldots,\seed^{P'}_{p_n})
\end{displaymath}
for $p_1,\ldots,p_n \in {P}$. 
If $(P_n)_{n<\omega}$ is an infinite level-1 tower, the associated measure tower $(\mu^{P_n})_{ n<\omega}$ is easily seen close to $\mathbb{L}$.

The proof of $\boldpi{1}$-determinacy \cite{martin_pi11_det} shows that
\begin{mytheorem}[Martin]
  \label{thm:martin_pi11_determinacy}
 Assume $\boldpi{1}$-determinacy. Let $(P_n)_{n<\omega}$ be an infinite level-1 tower. The following are equivalent.
  \begin{enumerate}
  \item $(P_n)_{n<\omega}$ is $\Pi^1_1$-wellfounded.
  \item $[\omega_1]^{\bigcup\set{P_n}{n<\omega}\uparrow} \neq \emptyset$.
  \item $(\mu^{P_n})_{n<\omega}$ is $\mathbb{L}$-countably complete.
  \item The direct limit of $(j^{P_m,P_{n}})_{m<n<\omega}$ is wellfounded. 
  \end{enumerate}
\end{mytheorem}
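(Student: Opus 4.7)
The plan is to prove the cycle (1) $\Rightarrow$ (2) $\Rightarrow$ (3) $\Leftrightarrow$ (4) $\Rightarrow$ (1). The equivalence (3) $\Leftrightarrow$ (4) is immediate from Proposition~\ref{prop:countable-complete-to-wellfounded} applied to $N = \mathbb{L}$ and the tower $(\mu^{P_n})_{n<\omega}$, which the excerpt already records as close to $\mathbb{L}$. The substantive work thus lies in (1) $\Rightarrow$ (2), (2) $\Rightarrow$ (3), and (4) $\Rightarrow$ (1).

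For (1) $\Leftrightarrow$ (2), the key observation is that the $<^P$-rank function on $\rep(P)$ is automatically continuous at each node $(p)$ with $p \in P$: a short Brouwer--Kleene case analysis shows that every $q \in \rep(P)$ with $q \prec^P (p)$ and $q \neq (p)$ satisfies $q \prec^P (p, n)$ for some $n$, so $(p) = \sup_n (p, n)$ in $<^P$. Consequently, if $<^P$ is wellfounded, post-composing its rank function with an order-preserving map onto a countable ordinal yields an $f \in \omega_1^{P\uparrow}$ lying in $L[x] \subseteq \mathbb{L}$ for any real $x$ coding $P$. The converse is trivial: any $f \in [\omega_1]^{P\uparrow}$ embeds $<^P$ order-preservingly into $\omega_1$, forcing wellfoundedness.

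For (2) $\Rightarrow$ (3), let $f$ witness (2) and let $(A_n)$ be a sequence with $A_n \in \mu^{P_n} \cap \mathbb{L}$. Apply closeness to produce $(B_n) \in \mathbb{L}$ with $B_n \subseteq A_n$ and $B_n \in \mu^{P_n}$; since $(B_n) \in L[x]$ for some real $x$ and $L[x] \models \mathrm{AC}$, choose inside $L[x]$ a sequence $(C_n)$ of $\mathbb{L}$-clubs with $[C_n]^{P_n\uparrow} \subseteq B_n$. Let $C = \bigcap_n C_n$, still an $\mathbb{L}$-club, and let $e : \ot(C) \to C$ be its continuous enumeration. Then $f' = e \circ \rank$ lies in $\omega_1^{P\uparrow} \cap \mathbb{L}$ with range in $C$, so setting $a_i = f'((p_i))$ where $\{p_i\} = P_i \setminus P_{i-1}$ gives $(a_1, \ldots, a_n) \in [C_n]^{P_n\uparrow} \subseteq A_n$ for every $n$.

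For (4) $\Rightarrow$ (1), I argue contrapositively. Suppose $<^P$ has an infinite descending chain $p_0 \succ^P p_1 \succ^P \cdots$; choose $n_0 < n_1 < \cdots$ with $\{p_0, \ldots, p_i\} \subseteq P_{n_i}$. The factor-map identity $j^{P_{n_i}, P_{n_{i+1}}}(\seed^{P_{n_i}}_{p_i}) = \seed^{P_{n_{i+1}}}_{p_i}$ recorded in the excerpt, together with commutativity in the direct system, gives $j_{n_i}(\seed^{P_{n_i}}_{p_i}) = j_{n_{i+1}}(\seed^{P_{n_{i+1}}}_{p_i}) > j_{n_{i+1}}(\seed^{P_{n_{i+1}}}_{p_{i+1}})$, so the images form an infinite descending sequence of ordinals in the direct limit, contradicting wellfoundedness. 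The main delicacy is in (2) $\Rightarrow$ (3): one must first invoke closeness to move from the external sequence $(A_n)$ to an internal $(B_n) \in \mathbb{L}$ before choice inside $\mathbb{L}$ can be used to pick witness clubs. All other steps reduce to routine book-keeping about the BK-order on $\rep(P)$ and the factor-map computations already set up in the excerpt.
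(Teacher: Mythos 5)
Your architecture --- the cycle $(1)\Rightarrow(2)\Rightarrow(3)\Leftrightarrow(4)\Rightarrow(1)$, with $(3)\Leftrightarrow(4)$ delegated to Proposition~\ref{prop:countable-complete-to-wellfounded} and the closeness of $(\mu^{P_n})_n$ to $\mathbb{L}$ --- is the standard decomposition of Martin's argument (the paper itself gives no proof, citing \cite{martin_pi11_det}), and your steps $(1)\Leftrightarrow(2)$ and $(4)\Rightarrow(1)$ are correct as written. There is, however, a genuine gap in $(2)\Rightarrow(3)$, and it sits exactly where the hypothesis of $\boldpi{1}$-determinacy has to do work. After passing to $(B_n)_{n<\omega}\in L[x]$ with $B_n\in\mu^{P_n}$, you ``choose inside $L[x]$ a sequence $(C_n)$ of $\mathbb{L}$-clubs with $[C_n]^{P_n\uparrow}\subseteq B_n$.'' Such clubs need not exist in $L[x]$. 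The reason $\mu_{\mathbb{L}}$ (and hence each $\mu^{P_n}$) is an $\mathbb{L}$-ultrafilter on $\power(\omega_1)\cap\mathbb{L}$ is that for $B\in\power(\omega_1)\cap L[x]$ one of $B$, $\omega_1\setminus B$ contains a tail of the Silver indiscernibles $I^x$ of $L[x]$ below $\omega_1$; that tail is a club lying in $L[x^{\#}]$, not in $L[x]$. Since the club filter of $L[x]$ on $\omega_1^V$ is certainly not an ultrafilter from the point of view of $L[x]$, there are sets $B\in\mu_{\mathbb{L}}\cap L[x]$ containing no club belonging to $L[x]$ at all. So choice inside $L[x]$ cannot produce the $C_n$; and if you instead pick each $C_n$ somewhere in $\mathbb{L}$ with no single model containing the whole sequence, you lose the claim that $\bigcap_n C_n$ is an $\mathbb{L}$-club. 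Note that your version of $(2)\Rightarrow(3)$ as written never uses sharps, whereas the conclusion genuinely requires them.

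The repair is the classical indiscernibility argument. Each $B_n$ is definable in $L[x]$ from $x$, finitely many ordinals below some countable $\gamma_n$, and parameters $\geq\omega_1$; hence for tuples of limit points of $I^x$ above $\gamma_n$, arranged in the order type of $<_{BK}\restriction P_n$, membership in $B_n$ is decided uniformly by indiscernibility, and it is decided positively because any club $D\in\mathbb{L}$ witnessing $B_n\in\mu^{P_n}$ meets $I^x$ in a club, so $[D]^{P_n\uparrow}\cap[I^x]^{P_n\uparrow}\neq\emptyset$. Taking $\gamma=\sup_n\gamma_n<\omega_1$ and $C$ to be the set of limit points of $I^x\setminus\gamma$, one obtains a \emph{single} club $C\in L[x^{\#}]\subseteq\mathbb{L}$ with $[C]^{P_n\uparrow}\subseteq B_n$ for every $n$ simultaneously; your final step, composing the rank function of $<^{P}$ with the increasing enumeration of $C$, then goes through verbatim. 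With this substitution the proof is correct and is the intended argument.
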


The next two lemmas compute the ``effective uniform cofinality'' of the image of  certain ordinals under level-1 tree factoring maps. 

\begin{mylemma}
  \label{lem:level_2_uniform_cofinality}
  Suppose $(P^{-}, p)$ is a partial level $\leq 1$ tree  whose completion is $P$. $\sigma, \sigma'$ both factor $(P,W)$. $\sigma $ and $\sigma'$ agree on $P^{-}$, $\sigma'(p)$ is the $\prec^W$-predecessor of $\sigma(p)$. Then for any $\beta < j^{P^{-}}(\omega_1)$ such that $\cf^{\mathbb{L}}(\beta) = \seed^{P^{-}}_{p^{-}}$, 
\begin{displaymath}
  \sigma^W \circ j^{P^{-},P}_{\sup}(\beta) =   ({\sigma}')^W_{\sup} \circ j^{P^{-},P} (\beta).
\end{displaymath}
\end{mylemma}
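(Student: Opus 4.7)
The plan is to fix an $\mathbb{L}$-definable cofinal sequence in $\beta$ and re-express both sides as suprema along that sequence, then match them via the one-step gap hypothesis between $\sigma$ and $\sigma'$.

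Setting $\kappa = \seed^{P^-}_{p^-}$, since $\cf^{\mathbb{L}}(\beta) = \kappa$ I will fix an increasing cofinal $g:\kappa\to\beta$ in $\mathbb{L}$. By elementarity of $j^{P^-,P}$, the function $h := j^{P^-,P}(g)\in\mathbb{L}$ is increasing and cofinal from $j^{P^-,P}(\kappa) = \seed^P_{p^-}$ into $j^{P^-,P}(\beta)$, and $h(j^{P^-,P}(\gamma)) = j^{P^-,P}(g(\gamma))$ for $\gamma<\kappa$. This gives
\[
j^{P^-,P}_{\sup}(\beta) = \sup_{\gamma<\kappa} h(j^{P^-,P}(\gamma)), \qquad j^{P^-,P}(\beta) = \sup_{\gamma'<\seed^P_{p^-}} h(\gamma').
\]

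Because $\sigma$ and $\sigma'$ agree on $P^-\cup\{\emptyset\}$, the compositions $\sigma^W\circ j^{P^-,P}$ and $(\sigma')^W\circ j^{P^-,P}$ coincide with a single map $\tau$ on $\mathbb{L}$, namely the one induced by $\sigma\res P^-$ factoring $(P^-,W)$. I will then apply $\sigma^W$ elementarily to the first presentation, and apply $(\sigma')^W$ pointwise to the cofinal family $h''\seed^P_{p^-}$ in the second, rewriting the LHS as a supremum of $\tau(g)$ along the cofinal subset $\tau''\kappa$ of $\tau(\kappa) = \seed^W_{\sigma(p^-)}$, and the RHS as the supremum of $\tau(g)$ along $((\sigma')^W)''\seed^P_{p^-}$ inside the same $\tau(\kappa)$. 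The equation then reduces to showing that these two cofinal subsets of $\tau(\kappa)$ yield equal suprema for $\tau(g)$.

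The main obstacle, and the core of the proof, is this final calibration. My plan is to invoke Lemma~\ref{lem:j_sigma_continuity}: encode $\sigma'$ as an order-preserving map $\tilde\sigma'$ on $\prec^P$- vs.\ $\prec^W$-rank indices, and apply the factorization $j^{\tilde\sigma'}_{\sup} = j^{\tilde\sigma'_k}\circ j^{\tilde\tau_k}_{\sup}$ at the index $k$ corresponding to $p$. The hypothesis $\sigma'(p) = \pred_{\prec^W}(\sigma(p))$ is precisely the assertion that $\tilde\sigma'$ has a one-step gap at $k$: this makes the ``continuous part'' $\tilde\sigma'_k$ agree with $\tilde\sigma$ on the relevant indices, so that the discontinuity of $(\sigma')^W$ at $\seed^P_{p^-}$ cancels exactly the difference between $j^{P^-,P}$ and $j^{P^-,P}_{\sup}$ that is absorbed into $\sigma^W$ on the LHS. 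Once this single-gap bookkeeping is carried out on uniform indiscernibles, the desired equality follows.
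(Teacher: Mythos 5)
Your second paragraph contains the fatal step. The quantity you call ``the LHS rewritten as a supremum of $\tau(g)$ along $\tau''\kappa$'' unwinds to $\sup \tau''\beta \;=\; \sigma^W_{\sup}\circ j^{P^{-},P}_{\sup}(\beta)$, whereas the left-hand side of the lemma is $\sigma^W\circ j^{P^{-},P}_{\sup}(\beta)$, with $\sigma^W$ \emph{applied}, not $\sigma^W_{\sup}$. These differ under the hypotheses: $j^{P^{-},P}_{\sup}(\beta)$ has $\mathbb{L}$-cofinality $\seed^P_p$ (not $\seed^P_{p^{-}}$), and since $\sigma'(p)$ lies strictly $\prec^W$-between $\sigma[\{q\in P: q\prec^P p\}]$ and $\sigma(p)$, the map $\sigma$ has a gap at $p$ in the sense of Lemma~\ref{lem:j_sigma_continuity}, so $\sigma^W$ is discontinuous precisely at points of $\mathbb{L}$-cofinality $\seed^P_p$. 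For the same reason your claim that $\tau''\kappa$ is cofinal in $\tau(\kappa)=\seed^W_{\sigma(p^{-})}$ is false: $\sup\tau''\kappa$ is the uniform indiscernible immediately above $\seed^W_{\sigma(\pred_{\prec^P}(p))}$, which is $\leq \seed^W_{\sigma'(p)}<\seed^W_{\sigma(p)}<\seed^W_{\sigma(p^{-})}$. A concrete check: take $P^{-}=\{(0)\}$, $p=(0,0)$, $P=\{(0),(0,0)\}$, $\beta=\omega_1$. Then $j^{P^{-},P}_{\sup}(\omega_1)=\omega_1=\seed^P_{(0,0)}$, so the true LHS is $\sigma^W(\omega_1)=\seed^W_{\sigma((0,0))}$ (equal to the RHS), while your rewritten LHS is $\sup\tau''\omega_1=\omega_1$. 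Consequently your ``final calibration'' reduces the lemma to the equality of the suprema of $\tau(g)$ over $\tau''\kappa$ and over $((\sigma')^W)''\seed^P_{p^{-}}$, and these index sets are \emph{not} mutually cofinal, so that target identity is simply false; no bookkeeping with Lemma~\ref{lem:j_sigma_continuity} will establish it.

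The computation can be repaired in your framework, but the sup presentation of the LHS must be taken one indiscernible higher: since $(j^{P^{-},P})''\kappa$ is cofinal in $\seed^P_p$, one has $j^{P^{-},P}_{\sup}(\beta)=\sup h''\seed^P_p$, an identity about $h$ and the single ordinal $\seed^P_p$ to which $\sigma^W$ may legitimately be applied elementarily, giving $\mathrm{LHS}=\sup \tau(g)''\seed^W_{\sigma(p)}$; the RHS is $\sup\tau(g)''\bigl(((\sigma')^W)''\seed^P_{p^{-}}\bigr)$, and $((\sigma')^W)''\seed^P_{p^{-}}$ is cofinal in $\seed^W_{\sigma(p)}$ exactly because $\sigma'(p)=\pred_{\prec^W}(\sigma(p))$. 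The paper avoids these suprema altogether: it factors $(\sigma')^W_{\sup}$ at points of cofinality $\seed^P_{p^{-}}$ (note: at $p^{-}$, not at $p$ as your last paragraph indicates) into a continuous part $(\sigma^{+})^W$ and a discontinuous part $j^{P,P^{+}}_{\sup}$, where $P^{+}$ inserts a new node $p^{+}$ with $(p^{+})^{-}=p^{-}$ mapped to $\sigma(p)$, and then commutes $j^{P,P^{+}}_{\sup}\circ j^{P^{-},P}$ with $\iota^{P^{+}}\circ j^{P^{-},P}_{\sup}$.
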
 
\begin{proof}
Note that   $\cf^{\mathbb{L}}(j^{P^{-},P}(\beta)) = \seed^P_{p^{-}}$. 
  As in Lemma~\ref{lem:j_sigma_continuity}, $ ({\sigma}')^W_{\sup}$ acting on points of $\mathbb{L}$-cofinality $\seed^P_{p^{-}}$ is decomposed into the discontinuous part $j^{P,P^{+}}_{\sup}$ and the continuous part $({\sigma}^{+})^W$, where $P^{+}$ is the completion of the partial level $\leq 1$ tree $(P,p^{+})$,  $(p^{+})^{-} =p^{-}$, ${\sigma}^{+}$ factors $(P^{+},W)$, ${\sigma}'$ and ${\sigma}^{+}$ agree on $P$, ${\sigma}^{+} (p^{+}) = \sigma(p)$. 
  Let $\iota$ factor $(P,P^{+})$ where $\iota\res P^{-} = \id$, $\iota(p) = p^{+}$. So ${\sigma}^{+} \circ \iota={\sigma}$. By considering the $\seed^{P^{-}}_{p^{-}}$-cofinal sequence in  $\beta$, it is not hard to show that  $ j^{P,P^{+}}_{\sup} \circ j^{P^{-},P} (\beta) = \iota^{P^{+}} \circ j^{P^{-},P}_{\sup} (\beta) $.
Hence,
 \begin{align*}
   ({\sigma}')^W_{\sup} \circ j^{P^{-},P} (\beta) & = ({\sigma}^{+})^W \circ j^{P,P^{+}}_{\sup} \circ j^{P^{-},P} (\beta) \\
   & = ({\sigma}^{+})^W \circ \iota^{P^{+}} \circ j^{P^{-},P}_{\sup} (\beta)\\
   & =\sigma^W \circ j^{P^{-},P}_{\sup} (\beta).
 \end{align*}
\end{proof}

\begin{mylemma}
  \label{lem:level_2_uniform_cofinality_another}
  Suppose $(P, p)$ is a partial level $\leq 1$ tree, $\sigma$ factors $(P,W)$.  Suppose $\beta < j^{P^{-}}(\omega_1)$ and  either
  \begin{enumerate}
  \item $p=-1$, $P^{+} = P$, $\sigma'=\sigma$,  $\cf^{\mathbb{L}}(\beta) = \omega$, or 
  \item $p \neq -1$, $P^{+}$ is the completion of $(P,p)$, $\sigma'$ factors $(P^{+},W)$, $\sigma = \sigma' \res P$,  $\sigma'(p)$ is the $\prec^W$-predecessor of $\sigma(p^{-})$,  $\cf^{\mathbb{L}}(\beta) = \seed^{P}_{p^{-}}$.
  \end{enumerate}
Then
\begin{displaymath}
  \sigma^W (\beta) =   ({\sigma}')^W_{\sup} \circ j^{P,P^{+}} (\beta).
\end{displaymath}
\end{mylemma}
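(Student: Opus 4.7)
The plan is to split into the two cases, handling Case~1 by a short elementarity argument at countable cofinalities and Case~2 by a commutation identity combined with a no-gap continuity criterion analogous to the one used in the proof of Lemma~\ref{lem:level_2_uniform_cofinality}.

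For Case~1, the hypothesis $P^{+}=P$, $\sigma'=\sigma$ makes $j^{P,P^{+}}$ the identity on $\mathbb{L}$, so the claim reduces to $\sigma^W(\beta)=\sigma^W_{\sup}(\beta)$. Fix a real $x$ with a strictly increasing cofinal sequence $(\beta_n)_{n<\omega}\in L[x]$ in $\beta$, witnessing $\cf^{\mathbb{L}}(\beta)=\omega$. Since $\sigma^W\!\res L[x]$ is elementary, $\sigma^W(\beta)=\sup_n \sigma^W(\beta_n)$, and this is bounded by $\sigma^W_{\sup}(\beta)\le \sigma^W(\beta)$, yielding the desired equality.

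For Case~2, I first establish the pointwise commutation $(\sigma')^W\circ j^{P,P^{+}}=\sigma^W$ on $\mathbb{L}$. Every $\gamma\in\mathbb{L}$ is of the form $j^P(h)(\seed^P_{q_1},\ldots,\seed^P_{q_k})$ with $h\in\mathbb{L}$ and $q_i\in\desc(P)$; applying both sides and using $\sigma=\sigma'\!\res P$ together with $j^{P,P^{+}}(\seed^P_{q_i})=\seed^{P^{+}}_{q_i}$ (and $j^{P,P^{+}}\circ j^P=j^{P^{+}}$ for the constant seed $\emptyset$) yields the common value $j^W(h)(\seed^W_{\sigma(q_1)},\ldots,\seed^W_{\sigma(q_k)})$. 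The desired identity then reduces to showing $(\sigma')^W$ is continuous at $j^{P,P^{+}}(\beta)$, since together with the commutation this gives $\sigma^W(\beta)=(\sigma')^W(j^{P,P^{+}}(\beta))=(\sigma')^W_{\sup}(j^{P,P^{+}}(\beta))$.

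The continuity is the crux and follows from the level-1 analog of Lemma~\ref{lem:j_sigma_continuity} already implicit in the proof of Lemma~\ref{lem:level_2_uniform_cofinality}: $(\sigma')^W$ is discontinuous at a point of $\mathbb{L}$-cofinality $\seed^{P^{+}}_{q}$ precisely when $\sigma'(q)$ is not the immediate $\prec^W$-successor of $\sigma'$ evaluated at the $\prec^{P^{+}}$-predecessor of $q$. In our situation, elementarity of $j^{P,P^{+}}$ on some $L[x]$ gives $\cf^{\mathbb{L}}(j^{P,P^{+}}(\beta))=\seed^{P^{+}}_{p^{-}}$, the $\prec^{P^{+}}$-predecessor of $p^{-}$ is $p$, and the hypothesis that $\sigma'(p)$ is the $\prec^W$-predecessor of $\sigma(p^{-})=\sigma'(p^{-})$ is exactly the no-gap condition. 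The only step that really needs verification is this no-gap criterion itself: taking a cofinal continuous $g:\seed^{P^{+}}_{p^{-}}\to j^{P,P^{+}}(\beta)$ in $\mathbb{L}$ and applying $(\sigma')^W$ pointwise, one checks the resulting sequence is cofinal in $(\sigma')^W(j^{P,P^{+}}(\beta))$; this reduces to $\ran((\sigma')^W\res \seed^{P^{+}}_{p^{-}})$ being cofinal in $\seed^W_{\sigma(p^{-})}$, which in turn follows from $\sigma'(p)$ being the immediate $\prec^W$-predecessor of $\sigma(p^{-})$ so that the images $(\sigma')^W(\seed^{P^{+}}_{r})=\seed^W_{\sigma'(r)}$ for $r\prec^{P^{+}} p^{-}$ fill up $\seed^W_{\sigma(p^{-})}$ from below. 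All the remaining machinery — elementarity on individual $L[x]$, preservation of seed identities, computation of cofinalities under factor maps — is the same as was already carried out in Lemma~\ref{lem:level_2_uniform_cofinality}.
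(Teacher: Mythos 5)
Your proposal is correct and follows essentially the same route as the paper's proof: the commutation identity $\sigma^W=(\sigma')^W\circ j^{P,P^{+}}$, the computation $\cf^{\mathbb{L}}(j^{P,P^{+}}(\beta))=\seed^{P^{+}}_{p^{-}}$ (resp.\ $\omega$), and continuity of $(\sigma')^W$ at that point via the criterion of Lemma~\ref{lem:j_sigma_continuity}. You merely spell out the seed computation behind the commutativity and the no-gap verification that the paper leaves to the cited lemma.
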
 
\begin{proof}
  By commutativity of factoring maps, $\sigma^W(\beta) = (\sigma')^W \circ j^{P,P^{+}}(\beta) $. Note that $\cf^{\mathbb{L}}(j^{P,P^{+}}(\beta)) = \seed^{P^{+}}_{p^{-}}$ when $p\neq -1$,  $\cf^{\mathbb{L}}(j^{P,P^{+}}(\beta)) = \omega$ when $p= -1$. In either case, by Lemma~\ref{lem:j_sigma_continuity}, $(\sigma')^W$ is continuous at $j^{P,P^{+}}(\beta)$.
\end{proof}

\begin{mydefinition}
  \label{def:signature_etc_level_1}
Suppose $W$ is a finite level-1 tree,  $\vec{w} = (w_i)_{i < m}$ is a distinct enumeration of a subset of $W$. 
Suppose $f : [\omega_1]^{W \uparrow} \to \omega_1$ is a function which lies in $\mathbb{L}$. 
\begin{enumerate}[series=f]
\item The \emph{signature} of $f$ is $\vec{w}$ iff there is $C \in \mu_{\mathbb{L}}$ such that
  \begin{enumerate}
  \item for any $\vec{\alpha}, \vec{\beta} \in [C]^{W \uparrow}$, if $(\alpha_{w_0},\ldots,\alpha_{w_{m-1}}) <_{BK} (\beta_{w_0}, \ldots,\beta_{w_{m-1}})$ then $f(\vec{\alpha}) < f(\vec{\beta})$;
  \item for any $\vec{\alpha}, \vec{\beta} \in [C]^{W \uparrow}$, if $(\alpha_{w_0},\ldots,\alpha_{w_{m-1}}) = (\beta_{w_0}, \ldots,\beta_{w_{m-1}})$ then $f(\vec{\alpha}) = f(\vec{\beta})$.
  \end{enumerate}
  In particular, $f$ is constant on a $\mu^W$-measure one set iff the signature of $f$ is $\emptyset$.
\end{enumerate}
Suppose the signature of $f$ is $\vec{w}=(w_i)_{i < m}$.  
\begin{enumerate}[resume*=f]
\item $f$ is \emph{essentially continuous} iff $m>0$ and for $\mu^W$-a.e.\ $\vec{\alpha}$, $f(\vec{\alpha}) = \sup \set{f(\vec{\beta})}{ (\beta_{w_0},\ldots,\beta_{w_{m-1}}) < (\alpha_{w_0},\ldots,\alpha_{w_{m-1}}) }$.  Otherwise, $f$ is \emph{essentially discontinuous}.  
\item Put $[B]^{W \uparrow -1} = [B]^{W \uparrow} \times \omega$. For $w \in \dom(W)$, put $[B]^{W \uparrow w} = \set{(\vec{\beta},\gamma)}{\vec{\beta} \in [B]^{W \uparrow} , \gamma < \beta_w}$.  For $v \in \se{-1} \cup W$, say that the \emph{uniform cofinality} of $f$ is $v$ iff there is $g : [\omega_1]^{W\uparrow v} \to \omega_1$ such that $g \in \mathbb{L}$ and for $\mu^W$-a.e.\ $\vec{\alpha}$, $F(\vec{\alpha}) = \sup \{G(\vec{\alpha}, \beta) : (\vec{\alpha}, \beta) \in [\omega_1]^{W \uparrow v}\}$ and the function $\beta \mapsto G(\vec{\alpha}, \beta)$ is order preserving.  
\end{enumerate}
It is essentially shown in \cite{sol_delta13coding} that $f$ has a unique signature and uniform cofinality. Let $(P_i,p_i)_{i < m} \concat (P_{m})$ be the partial level $\leq 1$ tower of continuous type and let $\sigma$ factor $(P_{m}, W)$ such that $\sigma(p_i) = w_i$ for each $i < m$. Note that $w_i \prec^W w_0$ for $0 < i < m$, so each $P_i$ is indeed a regular level-1 tree. 
\begin{enumerate}[resume*=f]
\item    $\vec{P}= (P_i)_{i \leq m}$ is called the \emph{level-1 tower induced by $f$}.
\item $\sigma$ is called the \emph{factoring map induced by $f$}.
\end{enumerate}
Note that $\sigma \res P_i$ factors $(P_i,W)$ for each $i$. 
\begin{enumerate}[resume*=f]
\item 
The \emph{potential partial level $\leq 1$ tower} induced by $f$ is
  \begin{enumerate}
  \item $(P_m, (p_i)_{i < m})$, if $f$ is essentially continuous;
  \item $(P_m, (p_i)_{i < m} \concat (-1))$, if $f$ is essentially discontinuous and has uniform cofinality $-1$;
  \item $(P_m, (p_i)_{i < m} \concat (p^{+}))$, if $f$ is essentially discontinuous and has uniform cofinality $w_{*} \in W$, $(P_m, p^{+})$ is a partial level $\leq 1$ tree, $\sigma((p^{+})^{-}) = w_{*}$.
  \end{enumerate}
\end{enumerate}
In particular, if $w_{*} \in W$, $f(\vec{\alpha}) = \alpha_{w_{*}}$ is the projection map, then the potential partial level $\leq 1$ tower induced by $f$ is $(\emptyset, (0))$.  \begin{enumerate}[resume*=f]
\item 
The \emph{approximation sequence} of $f$ is $( f_i)_{i \leq m}$ where $\dom(f_i) = [\omega_1]^{P_i \uparrow}$, $f_0$ is the constant function with value $\omega_1$, $f_i(\vec{\alpha}) = \sup \set{f(\vec{\beta})}{\vec{\beta} \in [\omega_1]^{W \uparrow}, (\beta_{w_0},\ldots,\beta_{w_{i-1}})= (\alpha_{p_0},\ldots,\alpha_{p_{i-1}})}$ for $1 \leq i \leq m$. 
\end{enumerate}
In particular, $f_m(\vec{\beta}_{\sigma}) = f(\vec{\beta})$ for $\mu^W$-a.e.\ $\vec{\beta}$.
\end{mydefinition}

Note that all the relevant properties of $f$ depend only on the value of $f$ on a $\mu^W$-measure one set. We will thus be free to say the signature, etc.\ of $f$ when $f$ is defined on a $\mu^W$-measure one set. 

\begin{mydefinition}
  \label{def:ordinal_long_division}
  Suppose $\omega_1 \leq \beta < u_{\omega}$ is a limit ordinal. Suppose $W$ is a finite level-1 tree,  $\beta = [f]_{\mu^W} <  u_{\card(W)+1}$,  the signature of $f$ is $(w_i)_{i < m}$, the approximation sequence of $f$ is $(f_i)_{ i \leq  m}$, the level-1 tower induced by $f$ is $(P_i)_{i \leq m}$, the factoring map induced by $f$ is $\sigma$.  
Then:
\begin{enumerate}
\item The \emph{signature} of $\beta$ is $(\seed^W_{w_i})_{i < m}$.
\item The \emph{approximation sequence} of $\beta$ is $( [f_i]_{\mu^{P_i}})_{i \leq m}$.
\item $\beta$ is \emph{essentially continuous} iff $f$ is essentially continuous.
\item The \emph{uniform cofinality} of $\beta$ is $\omega$ if $f$ has uniform cofinality $-1$. $\seed^W_{w_{*}}$ if $f$ has uniform cofinality $w_{*} \in W \cup \se{\emptyset}$.
\item The \emph{potential partial level $\leq 1$ tower induced by $\beta$} is the potential partial level $\leq 1$ tower induced by $f$.
\end{enumerate}
\end{mydefinition}

The uniform cofinality of $\beta$ is exactly $\cf^{\mathbb{L}}(\beta)$. 
The  signature, approximation sequence and essential continuity of $\beta$ are independent of the choice of $(W,f)$ in Definition~\ref{def:ordinal_long_division}, and moreover $\Delta^1_3$ in $\beta$ uniformly. 


\subsection{The tree $S_2$}
\label{sec:level-2-kechris_tree}

In this section, we redefine the tree $S_2$ introduced in  \cite[Section 2]{kechris_homo_tree_cabal} in the language of trees of uniform cofinalities in \cite{cabal2_intro_jackson}.

A \emph{partial level $\leq 1$ tree} is a pair $(P,t)$ such that $P$ is a finite regular level-1 tree, and either
\begin{enumerate}
\item   $ t\notin {P} \wedge {P} \cup \se{t} $ is a regular level-1 tree, or
\item $P\neq \emptyset$, $t =-1$. 
\end{enumerate}
$-1$ is regarded as the ``level-0'' component, hence the name ``level $\leq 1$''. 
$(P ,t)$ is of degree $0$ if $t = -1$, of degree 1 otherwise. 
We put $\dom(P,t) = P \cup \se{t}$. 
$\vec{\alpha} = (\alpha_s)_{s \in P \cup \se{ t}}$ \emph{respects} $(P,t)$ iff $\vec{\alpha} \res P$ respects $P$ and $t=-1 \to \alpha_{t}<\omega$, $t \neq -1 \to \vec{\alpha}$ respects $P \cup \se{t}$.
The \emph{cardinality} of $(P, t)$ is  $\card(P, t) = \card(P)+1 $.  The unique partial level $\leq 1$ tree of cardinality 1 is $(\emptyset, (0)) $. 
If $(P,t)$ is of degree 1, its  \emph{completion} is ${P} \cup \se{t}$. $(P,-1)$ has no completion. 
 $(P, t)$ is a \emph{partial subtree} of $P'$ iff the completion of $(P, t)$ exists and is a subtree of $P'$. 

A \emph{partial level $\leq 1$ tower of discontinuous type} is a nonempty finite sequence $(\vec{P},\vec{p})  =  (P_i,p_i)_{i \leq k}$ such that $\card(P_0,p_0) = 1$,
each  $(P_i,p_i)$ is a partial level $\leq 1$ tree, and $P_{i+1}$ is the completion of $(P_i,p_i)$. 
A \emph{partial level $\leq 1$ tower of continuous type} is $(P_i,p_i)_{i < k} \concat (P_{*})$ such that  either $k=0 \wedge P_{*} = \emptyset$ or $(P_i,p_i)_{i < k}$ is a partial level $\leq 1$ tower of discontinuous type $\wedge P_{*}$ is the completion of $(P_{k-1},p_{k-1})$. 
For notational convenience, the information of a partial level $\leq 1$ tower is compressed into a potential partial level $\leq 1$ tower. 
We say a \emph{potential partial level $\leq 1$ tower} is $(P_{*},\vec{p})=(P_{*},(p_i)_{i < \lh(\vec{p})})$ such that for some level-1 tower $\vec{P} = (P_i)_{i \leq  k}$, either $P_{*} = P_k \wedge (\vec{P},\vec{p})$ is a partial level $\leq 1$ tower of discontinuous type or $(\vec{P},\vec{p}) \concat (P_{*})$ is a partial level $\leq 1$ tower of continuous type. 
 If $(P_{*} , (p_i)_{i \leq k})$ is a potential partial level $\leq 1$ tower of discontinuous type, its  \emph{completion} is the completion of $(P_{*}, p_k)$. 

Clearly,  a potential partial level $\leq 1$ tower $(P_{*},\vec{p})$ is of continuous type iff $\card(P_{*}) = \lh(\vec{p})$, of discontinuous type iff $\card(P_{*}) = \lh(\vec{p})-1$.

A \emph{tree of level-1 trees} is a tree $T$ on $\omega^{<\omega}$ (i.e., $T \subseteq (\omega^{<\omega})^{<\omega}$ and closed under $\subseteq$) and such that for any $s \in T$, $\set{a \in \omega^{<\omega}}{ s \concat (a) \in T}$ is a level-1 tree.

A \emph{level-2 tree of uniform cofinalities}, or \emph{level-2 tree}, is a function ${Q}$ such that $\dom(Q)$ is a tree of level-1 trees, $\emptyset \in \dom(Q)$ and 
 for any $q \in \dom(Q)$, $(Q (q \res l))_{l \leq  \lh(q)}$ is a partial level $\leq 1$ tower of discontinuous type. In particular, $Q(\emptyset) = (\emptyset, (0))$. 

We denote  $Q(q) = (Q_{\tree}(q), Q_{\node}(q))$ and $Q[q] = (Q_{\tree}(q),  (Q_{\node}(q\res l))_{l \leq \lh(q)})$. So $Q[q]$ is a potential partial level $\leq 1$ tower of discontinuous type. Denote $Q\se{q} = \set{a \in \omega^{<\omega}}{q \concat (a) \in \dom(Q)}$, which is a level-1 tree. 
The \emph{cardinality} of $Q$ is $\card(Q) = \card(\dom(Q))$. $\card(Q)$ could be finite or $\aleph_0$. 

For  $Q$  a level-2 tree, Let
\begin{displaymath}
\bardom(Q) = \dom(Q) \cup \set{q \concat (-1)}{q \in \dom(Q) }.
\end{displaymath}
Here $-1$ is a distinguished element which is $<_{BK}$-smaller than any node in $\omega^{<\omega}$. So $<_{BK} \res \bardom(Q)$ extends $<_{BK} \res \dom(Q)$ where $q \concat (-1)$ comes before any $q \concat (s) \in \dom(Q)$. 
If $q \neq \emptyset$, denote $Q\se{q,-} =\se{q^{-} \concat (-1)} \cup \set{ q^{-} \concat (a)}{Q_{\tree}(q^{-} \concat (a) )= Q_{\tree}(q) \wedge a <_{BK} q (\lh(q)-1)}$,  
$Q\se{q,+} = \se{q^{-}} \cup \set{ q^{-} \concat (a)}{Q_{\tree}(q^{-} \concat (a)) = Q_{\tree}(q) \wedge a >_{BK} q (\lh(q)-1)}$.
For $q \in \bardom(Q)$,  $q$ is \emph{of discontinuous type} if $q \in \dom(Q)$; $q$ is \emph{of continuous type} if $q \in \bardom(Q) \setminus \dom(Q)$. In particular, $\se{\emptyset,(-1)} \subseteq \bardom(Q)$. 
Put $Q[q \concat (-1)] = (P, (Q_{\node}(q \res l))_{l \leq \lh(q)})$, where $P$ is the completion of $Q(q)$. So $Q[q \concat (-1)]$ is a potential partial level $\leq 1$ tower of continuous type.

\begin{mydefinition}
\label{def:Q_desc}
Suppose $Q$ is a level-2 tree. 
A \emph{$Q$-description} is a triple
\begin{displaymath}
\mathbf{q} = (q,P, \vec{p})
\end{displaymath}
such that $q \in \bardom(Q)$ and  $(P,\vec{p}) = Q[q]$. 
 $\desc(Q)$ is the set of $Q$-descriptions.  A $Q$-description $(q,P, \vec{p})$ is of \emph{(dis-)continuous type} iff $q$ is of (dis-)continuous type. The \emph{constant $Q$-description} is $(\emptyset,\emptyset,(0))$. 
\end{mydefinition}

\begin{mydefinition}
\label{def:extended_Q_desc}
  Suppose $Q$ is a level $\leq 2$ tree.  An \emph{extended $Q$-description} is either a $Q$-description or of the form $(2, (q,P,\vec{p}))$ such that $(2, (q \concat (-1), P, \vec{p}))$ is a $Q$-description of continuous type.  $\exdesc(Q)$ is the set of extended $Q$-descriptions. $(d,\mathbf{q}) \in \exdesc(Q)$ is \emph{regular} iff either $(d,\mathbf{q}) \in \desc(Q)$ of discontinuous type or $(d, \mathbf{q}) \notin \desc(Q)$.
\end{mydefinition}

If $\mathbf{q}= (q,P,\vec{p}) \in \desc(Q)$ is of discontinuous type, put $\mathbf{q}\concat (-1) = (q\concat (-1), P^{+}, \vec{p})$ where $P^{+}$ is the completion of $(P,\vec{p})$. 
If $\vec{\alpha} = ({\alpha}_p)_{p \in N}$ is a tuple indexed by $N$, $q \in \bardom(Q)$, $\dom(Q(q^{-})) \subseteq N$ if $q\neq \emptyset$, 
we put
\begin{displaymath}
  \vec{\alpha} \oplus_Q q =  (  \alpha_{p_{0}}, q(0), \ldots, {\alpha}_{p_{\lh(q)-1}}, q(\lh(q)-1)),
\end{displaymath}
where $p_i = Q_{\node}(q \res i)$. 

The ordinal representation of $Q$ is the set
\begin{align*}
\rep(Q) =& \{ \vec{\alpha} \oplus_Q q : q \in \dom(Q) ,  \vec{\alpha} \text{ respects } Q_{\tree}(q)  \} \\
  & \cup \set{\vec{\alpha} \oplus_Q q\concat (-1)}{q \in \dom(Q), \vec{\alpha} \text{ respects } Q(q)}.
\end{align*}
$\rep(Q)$ is endowed with the $<_{BK}$ ordering:
\begin{displaymath}
  <^Q = <_{BK} \res \rep(Q).
\end{displaymath}
 Thus,  the $<^Q$-greatest element is $\emptyset = \emptyset \oplus_{Q } \emptyset$, and the set $\{(\beta,-1):\beta<\omega_1\}$ is $<^Q$-cofinal below $\emptyset$. In general, if $q \in \dom(Q)$ and $\vec{\alpha}$ respects $Q_{\tree}(q)$ and every entry of $\vec{\alpha}$ is additively closed, then $\vec{\alpha}\oplus_Q q$ is the $<^Q$-sup of $\vec{\alpha}\concat (\beta) \oplus q \concat (-1) \in \rep(Q)$.
The fact that $(0)$ is the $<_{BK}$-maximum node of a nonempty regular level-1 tree implies that if $(q,P) \in \desc({Q})$, $q \neq \emptyset$,  $ (\alpha_p)_{p \in P}$ respects ${P}$, then $\alpha_{(0)}$ is bigger than $\alpha_p$  for any $p \in {P} \setminus \se{(0)}$. Hence, when $Q$ is finite, $<^Q$ has order type $\omega_1+1$. If $B\in \mathbb{L}$ is a subset of $\omega_1$, we put
\begin{displaymath}
  f \in B^{Q \uparrow}
\end{displaymath}
iff $f \in \mathbb{L}$ is an order preserving, continuous function from $\rep(Q)$ to $B \cup \se{\omega_1}$. If $f \in B^{Q \uparrow}$, for each $q \in \dom(Q)$, letting $P_q = Q_{\tree}(q)$,   $f_q$ is the function on $[\omega_1]^{P_q \uparrow}$ that sends $\vec{\alpha} $ to $f(\vec{\alpha} \oplus_Q q)$, and 
\begin{displaymath}
  [f]^Q = ([f]^Q_q)_{q \in \dom(Q)}
\end{displaymath}
where $[f]^Q_q = [f_q]_{\mu^{P_q}}$. 

A \emph{level $\leq 2$ tree} is a pair $Q = (\comp{1}{Q},\comp{2}{Q})$ such that $\comp{d}{Q}$ is a level-$d$ tree for $d \in \se{1,2}$. Its \emph{cardinality} is $\card(Q) =\sum_d\card(\comp{d}{Q})$. We follow the convention that $\comp{d}{Q}$ always stands for the level-$d$ component of a level $\leq 2$ tree $Q$. 
$Q$ is a \emph{level $\leq 2$ subtree} of $Q'$ iff $\comp{d}{Q}$ is a level-$d$ subtree of $\comp{d}{Q}'$ for $d \in \se{1,2}$.
$\rep(Q) = \bigcup_{d}( \se{d} \times \rep(\comp{d}{Q}))$. $<^Q = <_{BK} \res \rep(Q)$. So $<^Q$ is essentially the concatenation of  $<^{\comp{1}{Q}}$ and $<^{\comp{2}{Q}}$. $\dom(Q) = \bigcup_{d}( \se{d} \times \dom(\comp{d}{Q})) $, $\bardom(Q) =\bigcup_{d}( \se{d} \times \bardom(\comp{d}{Q})) $, where $\bardom(\comp{1}{Q}) = \dom(\comp{1}{Q}) = \comp{1}{Q}$.   $\desc(Q) = \bigcup_{d}( \se{d} \times \desc(\comp{d}{Q}))$ is the set of $Q$-descriptions. $(d,\mathbf{q}) \in \desc(Q)$ is \emph{of continuous type} iff $d=2$ and $\mathbf{q}$ is of continuous type; otherwise, $(d, \mathbf{q})$ is \emph{of discontinuous type}. 
 $Q$ is \emph{$\Pi^1_2$-wellfounded} iff $\comp{1}{Q}$ is $\Pi^1_1$-wellfounded and $\comp{2}{Q}$ is $\Pi^1_2$-wellfounded. By virtue of the Brower-Kleene ordering, the next proposition is a corollary of Theorem~\ref{thm:martin_pi11_determinacy}. 

\begin{myproposition}
  \label{prop:level_2_tower_wellfounded_equivalence}
Let $Q$ be a level $\leq 2$ tree. Then $Q$ is $\Pi^1_2$-wellfounded iff $<^Q$ is a wellordering on $\rep(Q)$.
\end{myproposition}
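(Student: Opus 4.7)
The plan is to reduce to the pure level-2 case. For a level $\leq 2$ tree $Q$, the order $<^Q$ is the Brouwer-Kleene concatenation of $<^{\comp{1}{Q}}$ and $<^{\comp{2}{Q}}$, so $<^Q$ is a wellordering on $\rep(Q)$ iff $<^{\comp{1}{Q}}$ wellorders $\rep(\comp{1}{Q})$ and $<^{\comp{2}{Q}}$ wellorders $\rep(\comp{2}{Q})$. Since $Q$ is $\Pi^1_2$-wellfounded, by definition, iff $\comp{1}{Q}$ is $\Pi^1_1$-wellfounded and $\comp{2}{Q}$ is $\Pi^1_2$-wellfounded, and since the level-1 equivalence ``$P$ is $\Pi^1_1$-wellfounded iff $<^P$ is a wellordering'' was recorded right after the definition of level-1 trees, the problem reduces to the pure level-2 statement: a level-2 tree $Q$ is $\Pi^1_2$-wellfounded iff $<^Q$ wellorders $\rep(Q)$.

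For the direction ``failure of $\Pi^1_2$-wellfoundedness implies failure of wellordering'', I would take an infinite branch $q^0 \subsetneq q^1 \subsetneq \cdots$ through $\dom(Q)$ whose associated infinite level-1 tower $P_n = Q_{\tree}(q^n)$ is $\Pi^1_1$-wellfounded. Theorem~\ref{thm:martin_pi11_determinacy} then supplies a tuple $\vec\alpha \in [\omega_1]^{\bigcup_n P_n \uparrow}$ respecting each $P_n$. Setting $x_n := (\vec\alpha \res P_n) \oplus_Q q^n \in \rep(Q)$, each $x_{n+1}$ is a proper Brouwer-Kleene lengthening of $x_n$, hence $x_n >^Q x_{n+1}$, producing an infinite descending sequence. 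For the converse, given an infinite descending sequence $x_i = \vec\alpha^i \oplus_Q q^i$ in $\rep(Q)$, a K\"{o}nig-style compactness argument on the interleaved (ordinal, node) encoding yields, after passing to a subsequence, an infinite limit branch $q^\infty$ through $\dom(Q)$ and a coherent tuple $(\alpha_p)$ indexed by $\bigcup_n Q_{\tree}(q^\infty \res n)$, obtained as the stable prefix values of the $\vec\alpha^i$. Coherence of the stable values means $(\alpha_p)$ respects every $P_n = Q_{\tree}(q^\infty \res n)$, so Theorem~\ref{thm:martin_pi11_determinacy} certifies the tower $(P_n)$ as $\Pi^1_1$-wellfounded, contradicting $\Pi^1_2$-wellfoundedness of $Q$.

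The hard part will be the combinatorial extraction in the converse direction: one must verify that infinite $<_{BK}$-descent cannot be sustained by the ordinal coordinates alone, since they live in the wellordered $\omega_1$, so the encoding lengths $2\lh(q^i)$ are forced to diverge and the $q^i$'s to stabilize on arbitrarily long prefixes of some $q^\infty$; and one must confirm that the resulting stable ordinal tuple respects each finite level-1 tree $P_n$ coherently as $n$ grows. The regularity built into the definition of a level-2 tree, which makes $(0)$ the $<_{BK}$-maximum node of every nonempty regular level-1 tree so that each newly added ordinal $\alpha_{(0)}$ dominates the preceding ones, is the key structural feature that keeps the extraction consistent across extensions $P_n \subsetneq P_{n+1}$ and lets the respecting condition propagate to $\bigcup_n P_n$.
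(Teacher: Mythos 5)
Your overall strategy --- splitting off the level-$1$ component and then translating between infinite branches of $\dom(\comp{2}{Q})$ carrying respecting tuples and $<_{BK}$-descending sequences in $\rep(\comp{2}{Q})$ via Theorem~\ref{thm:martin_pi11_determinacy} --- is exactly what the paper intends when it calls the proposition a corollary of that theorem ``by virtue of the Brouwer--Kleene ordering''. But there is a genuine gap: in both directions you treat failure of $\Pi^1_2$-wellfoundedness as synonymous with the existence of an infinite branch $y\in[\dom(Q)]$ whose tower $(Q_{\tree}(y\res n))_n$ is $\Pi^1_1$-wellfounded, i.e.\ with failure of clause~(2) of the definition; clause~(1), that every local level-$1$ tree $Q\se{q}$ is $\Pi^1_1$-wellfounded, is never engaged, and it is not implied by clause~(2). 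Concretely, let $\dom(Q)$ consist of $\emptyset$ together with the length-one nodes $((0,\dots,0))$, so that $Q\se{\emptyset}$ is the illfounded level-$1$ tree of all finite constant-$0$ sequences and $\dom(Q)$ has no infinite branch at all (clause~(2) holds vacuously while clause~(1) fails). Fixing one countable limit ordinal $\alpha$, the points $(\alpha,(0,\dots,0))\in\rep(Q)$ form a $<_{BK}$-descending sequence as the number of zeros grows. Your forward direction produces no descending sequence for this $Q$, and the key assertion of your converse --- that descent cannot be sustained by the ordinal coordinates alone and hence the lengths $2\lh(q^i)$ must diverge --- is false here: the descent is carried entirely by a single \emph{node} coordinate, whose values range over $(\omega^{<\omega},<_{BK})$, which is not wellordered.

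The repair is routine but must be made explicit. Forward direction: if some $Q\se{q}$ is illfounded, say with branch $a$, fix any $\vec{\alpha}$ respecting the finite tree $Q_{\tree}(q\concat(a\res n))$ (which is the same completion of $Q(q)$ for every $n$) and note that $\vec{\alpha}\oplus_Q (q\concat(a\res n))$ is $<_{BK}$-descending in $n$; otherwise use your branch argument. Converse direction: first dispose of the case that some $Q\se{q}$ is illfounded (then clause~(1) fails and you are done); only under the remaining hypothesis does each odd coordinate of the interleaved encoding range over a wellordered set, so that your coordinatewise stabilization is legitimate and the lengths are indeed forced to diverge. A smaller point: the stabilized tuple gives an order embedding of $(\bigcup_n Q_{\tree}(y\res n),<_{BK})$ into $\omega_1$, which already witnesses $\Pi^1_1$-wellfoundedness of the union directly; you need not verify the continuity and $\mathbb{L}$-membership required for $[\omega_1]^{\bigcup_n Q_{\tree}(y\res n)\uparrow}\neq\emptyset$ in order to invoke Theorem~\ref{thm:martin_pi11_determinacy}.
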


As a corollary, if $Q$ is $\Pi^1_2$-wellfounded, then $\ot(<^Q) =  \omega_1+1$.

If $f$ is a function on $\rep(Q)$, let $\comp{d}{f}$ be the function on $\rep(\comp{d}{Q})$ that sends $v$ to $f(d,v)$. 
If $B\in \mathbb{L}$ is a subset of $\omega_1$, we put
\begin{displaymath}
  f \in B^{Q \uparrow}
\end{displaymath}
iff $f\in \mathbb{L}$ is an order preserving, continuous function on $\rep(Q)$, and   $\comp{d}{f} \in B^{\comp{d}{Q}\uparrow}$ for $d \in \se{1,2}$. 
 $f$ represents a $ \card(Q)$-tuple of ordinals
\begin{displaymath}
  [f]^{Q} =  ({}^d[f]^Q_{q})_{(d,q) \in \dom(Q) }
\end{displaymath}
where  $ {}^d[f]^Q_{q} = [\comp{d}{f}]^{\comp{d}{Q}}_q$.
In particular, we must have ${}^2[f]^Q_{\emptyset} = \omega_1$. 
Let
\begin{displaymath}
  [B]^{Q\uparrow} = \set{[f]^{Q}}{ f \in B^{Q \uparrow} }.
\end{displaymath}

Suppose  $(2,\mathbf{q})=(2,(q,P, \vec{p})) \in \exdesc(Q)$.  
If $f \in \omega_1^{Q \uparrow}$, $\comp{2}{f}_{\mathbf{q}}$ is the function on $[\omega_1]^{P \uparrow}$  defined as follows:  $\comp{2}{f}_{\mathbf{q}} = \comp{2}{f}_q$ if $(2,\mathbf{q})  \in  \desc( Q)$; $\comp{2}{f}_{\mathbf{q}} (\vec{\alpha}) =  \comp{2}{f}_{q}(\vec{\alpha} \res \comp{2}{Q}_{\tree}(q))$ if $(2, \mathbf{q}) \notin \desc(Q)$.   
If $\vec{\beta} = (\comp{d}{\beta}_q)_{(d,q) \in \dom(Q)} \in [\omega_1]^{Q \uparrow}$, we define $\comp{d}{\beta}_{\mathbf{q}}$ for  $(d,\mathbf{q}) \in \exdesc(Q) $: if $d=2$, $\mathbf{q} = (q,P,\vec{p})$, put 
$\comp{d}{\beta}_{\mathbf{q}} = [\comp{d}{f}_{\mathbf{q}}]_{\mu^P}$ where $\vec{\beta} = [f]^Q$. 
Clearly, $\comp{2}{\beta}_{\mathbf{q}} = \comp{2}{\beta}_{q}$ if $(2,\mathbf{q}) \in \desc(Q)$ of discontinuous type,  $\comp{2}{\beta}_{\mathbf{q}} = j^{\comp{2}{Q}_{\tree}(q),P}(\comp{2}{\beta}_{q})$ if $(2,\mathbf{q}) \notin \desc(Q)$.  
The next lemma computes the remaining case when $\mathbf{q}\in \desc(Q)$ is  of continuous type, justifying that  $\comp{d}{\beta}_{\mathbf{q}}$ does not depend on the choice of $f$. 

\begin{mylemma}\label{lem:beta_q_unambiguous}
Suppose $Q$ is a level $\leq 2$ tree. 
Suppose $\vec{\beta} = (\comp{d}{\beta}_q)_{(d,q) \in \dom(Q)} \in [\omega_1]^{Q \uparrow}$, $(2,\mathbf{q})=(2,(q,P, \vec{p})) \in \desc({Q})$ is of continuous type, $P^{-} = Q_{\tree}(q^{-})$,  then $\comp{2}{\beta}_{\mathbf{q}} =j^{P^{-},P}_{\sup}(\comp{2}{\beta}_{q^{-}})$.
\end{mylemma}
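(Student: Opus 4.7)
The plan is to realize both sides in terms of a fixed $f \in \omega_1^{Q\uparrow}$ with $\vec{\beta} = [f]^Q$ and deduce the equality from the continuity of $f$ together with closure properties of $\mu_{\mathbb{L}}$. Writing $q_0 = q^{-}$ (so $q = q_0 \concat (-1)$ since $\mathbf{q}$ is of continuous type), $w = Q_{\node}(q_0)$, and $P = P^{-} \cup \{w\}$ the completion of $Q(q_0)$, one unpacks the definitions to obtain $\comp{2}{\beta}_{\mathbf{q}} = [\comp{2}{f}_q]_{\mu^P}$, $\comp{2}{\beta}_{q^{-}} = [\comp{2}{f}_{q_0}]_{\mu^{P^{-}}}$, and $\vec{\alpha} \oplus_Q q = (\vec{\alpha}_0 \oplus_Q q_0) \concat (\alpha_w, -1)$ for $\vec{\alpha} \in [\omega_1]^{P\uparrow}$ with $\vec{\alpha}_0 = \vec{\alpha}\res P^{-}$.

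The key continuity input is that in $<^Q$, the shorter tuple $\vec{\alpha}_0 \oplus_Q q_0$ is the supremum of its lengthenings $(\vec{\alpha}_0 \oplus_Q q_0) \concat (\alpha_w, -1)$ as $\alpha_w$ ranges cofinally in the admissible interval $I(\vec{\alpha}_0)$ of values making $(\vec{\alpha}_0, \alpha_w)$ respect $P$; continuity of $f$ therefore yields
\[
\comp{2}{f}_{q_0}(\vec{\alpha}_0) = \sup_{\alpha_w \in I(\vec{\alpha}_0)} \comp{2}{f}_q(\vec{\alpha}_0, \alpha_w),
\]
with $\alpha_w \mapsto \comp{2}{f}_q(\vec{\alpha}_0, \alpha_w)$ strictly increasing and continuous at limits, so that $\comp{2}{f}_q$ has uniform cofinality $w$ in the sense of Definition~\ref{def:signature_etc_level_1}.

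For the inequality $\geq$, I would fix any $h \in \mathbb{L}$ with $[h]_{\mu^{P^{-}}} < \comp{2}{\beta}_{q_0}$, set $\gamma(\vec{\alpha}_0) \in \mathbb{L}$ to be the least $\alpha_w \in I(\vec{\alpha}_0)$ with $h(\vec{\alpha}_0) < \comp{2}{f}_q(\vec{\alpha}_0, \alpha_w)$, and invoke Solovay's closure of $\mu_{\mathbb{L}}$ under $\mathbb{L}$-definable functions to get a $\mu^P$-measure one set on which $\alpha_w > \gamma(\vec{\alpha}\res P^{-})$; monotonicity then gives $h\circ\res_{P^{-}} \leq \comp{2}{f}_q$, hence $j^{P^{-},P}([h]_{\mu^{P^{-}}}) \leq [\comp{2}{f}_q]_{\mu^P}$, and taking sup over $h$ finishes this direction. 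For the reverse inequality, given $r \in \mathbb{L}$ with $r < \comp{2}{f}_q$ modulo $\mu^P$, uniform cofinality $w$ supplies $\gamma_r \in \mathbb{L}$ with $\gamma_r(\vec{\alpha}) < \alpha_w$ and $r(\vec{\alpha}) < \comp{2}{f}_q(\vec{\alpha}_0, \gamma_r(\vec{\alpha}))$; a conditional $\mathbb{L}$-regressivity argument applied fiberwise over $\vec{\alpha}_0$ would produce $\tilde{\gamma} \in \mathbb{L}$ with $\tilde{\gamma}(\vec{\alpha}_0) \in I(\vec{\alpha}_0)$ bounding $\gamma_r$ on a $\mu^P$-measure one set, so that $h(\vec{\alpha}_0) := \comp{2}{f}_q(\vec{\alpha}_0, \tilde{\gamma}(\vec{\alpha}_0))$ satisfies $h < \comp{2}{f}_{q_0}$ modulo $\mu^{P^{-}}$ and $r \leq h\circ\res_{P^{-}}$ modulo $\mu^P$, giving $[r]_{\mu^P} \leq j^{P^{-},P}([h]_{\mu^{P^{-}}}) \leq j^{P^{-},P}_{\sup}(\comp{2}{\beta}_{q^{-}})$. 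The main obstacle is the $\leq$ direction, specifically keeping $\tilde{\gamma}(\vec{\alpha}_0)$ strictly inside $I(\vec{\alpha}_0)$ so that $h(\vec{\alpha}_0) < \comp{2}{f}_{q_0}(\vec{\alpha}_0)$; this requires the combinatorics of how $w$ is inserted in $P$ via $<_{BK}$ together with the conditional $\mathbb{L}$-measure structure on $\alpha_w$ given $\vec{\alpha}_0$.
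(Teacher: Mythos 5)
Your proof is correct and follows essentially the same route as the paper's: fix $f$ with $[f]^Q=\vec{\beta}$, unpack both sides as ultrapower representatives, and prove the two inequalities by taking least-witness functions and pushing them through the club measure. The one step you flag as the obstacle — keeping $\tilde{\gamma}(\vec{\alpha}_0)$ strictly inside the admissible interval — is exactly what the paper dispatches by remarkability of level-1 sharps: the least witness $\xi<\alpha_v$ is $\mu^P$-a.e.\ equal to a function of $\vec{\alpha}\res\set{p}{p\prec^P v}$ alone, and such a value can then be legitimately substituted back in as the $v$-coordinate.
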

\begin{proof}
 Let $\vec{\beta} = [f]^Q$, $f \in [\omega_1]^{Q \uparrow}$. Let $v = p_{\lh(q)-1}$.  So $P$ is the  completion of $Q(q^{-}) = (P^{-},v)$. 

Suppose $\gamma = [g]_{\mu^{P^{-}}} < \comp{2}{\beta}_{q^{-}}$, $g \in \mathbb{L}$. So for $\mu^{P^{-}}$-a.e.\ $\vec{\alpha}$, $g(\vec{\alpha}) < \comp{2}{f}_{q^{-}}(\vec{\alpha}) = \sup_{\xi < \alpha_{v^{-}}} \comp{2}{f}_q (\vec{\alpha} \concat (\xi))$, where $\vec{\alpha} \concat (\xi)$ is the extension of $\vec{\alpha}$ whose entry indexed by $v$ is $\xi$. Let $h(\vec{\alpha})$ be the least $\xi < \alpha_{v^{-}}$ such that $g(\vec{\alpha}) < \comp{2}{f}_q(\vec{\alpha} \concat (\xi))$. Then $h \in \mathbb{L}$. By remarkability of level-1 sharps, we get  $C \in \mu_{\mathbb{L}}$ such that for any $\vec{\alpha} \in [C]^{P \uparrow}$, $h(\vec{\alpha} \res\dom(P^{-})) < {\alpha}_{v}$. Hence for any $\vec{\alpha} \in [C]^{P \uparrow}$, $g(\vec{\alpha} \res\dom(P^{-})) < \comp{2}{f}_q(\vec{\alpha})$. Hence $j^{P^{-},P}(\gamma) < \comp{2}{\beta}_q$.

Suppose on the other hand $\gamma = [g]_{\mu^P} < \comp{2}{\beta}_{\mathbf{q}}$. Then for $\mu^P$-a.e.\ $\vec{\alpha}$, $g(\vec{\alpha}) < \comp{2}{f}_q(\vec{\alpha}) = \sup_{\xi < \alpha_v} \comp{2}{f}_q (\vec{\alpha} \res\dom(P^{-}) \concat (\xi)) $. Let $h(\vec{\alpha})$ be the least $\xi < \alpha_v$ such that $g(\vec{\alpha}) < \comp{2}{f}_q (\vec{\alpha} \res\dom(P^{-}) \concat (\xi))$. By remarkability, we get $C \in \mu_{\mathbb{L}}$ and $h' \in \mathbb{L}$ such that for any $\vec{\alpha} \in [C]^{P \uparrow}$, $h(\vec{\alpha}) = h'(\vec{\alpha} \res \set{p}{p \prec^P v})$. Hence, $g(\vec{\alpha} ) < \comp{2}{f}_q (\alpha \res\dom(P^{-}) \concat h'(\vec{\alpha} \res \set{p}{p \prec^P v}))  = j^{P^{-},P}(\eta)$, where $\eta = [\vec{\alpha} \mapsto \comp{2}{f}_q(\vec{\alpha} \concat h'(\vec{\alpha} \res \set{p}{p \prec^{P^{-}} v^{-}}))]_{\mu^{P^{-}}}$. Clearly, $\eta < \comp{2}{\beta}_{q^{-}}$. So $\gamma < j^{P^{-},P}_{\sup}(\comp{2}{\beta}_{q^{-}})$.
\end{proof}

For a level $\leq 2$ tree, The properties of a tuple $[f]^Q$ for $f \in \omega_1^{Q \uparrow}$ are analyzed in \cite{sol_delta13coding,kechris_homo_tree_cabal}.  We restate the key results in the effective context. 
A tuple $\vec{\beta}$ \emph{respects} $Q$ iff $\vec{\beta} = [f]^Q$ for some $f \in \omega_1^{Q \uparrow}$; $\vec{\beta}$ \emph{weakly respects} $Q$ iff $\beta_{\emptyset} = \omega_1$ and for any $q \in \dom(Q) \setminus\se{\emptyset}$, $\beta_q < j^{Q_{\tree}(q^{-}), Q_{\tree}(q)}(\beta_{q^{-}})$. We will need an $\Delta^1_3$ definition of respectivity and weak respectivity. Weak respectability is clearly $\Delta^1_3$ from its definition. 
It is essentially shown in \cite{sol_delta13coding} that respectability is also $\Delta^1_3$.  We restate the relevant definitions in a more applicable fashion.

The next few lemmas are essentially part of effectivized Kunen's analysis \cite{sol_delta13coding} of tuples of ordinals in $u_{\omega}$. The proofs are rather routine
. 

Suppose $E$ is a club in $\omega_1$. For a partial level $\leq 1$ tree $(P,t)$, put $\vec{\alpha} = (\alpha_p)_{p \in P\cup \se{ t} } \in [E]^{(P,t) \uparrow}$ iff $\vec{\alpha}$ respects $(P,t)$, $(\alpha_p)_{p \in P}\in [E]^{P \uparrow}$,  and $t\neq -1 \to \alpha_t \in E$.
For a level $\leq 2$ tree $Q$, put
%
 \begin{align*}
   \rep(\comp{2}{Q}) \res E= &  \set{ \vec{\alpha} \oplus_{\comp{2}{Q}} q}{ q \in \dom(\comp{2}{Q}), \vec{\alpha} \in [E]^{\comp{2}{Q}_{\tree}(q) \uparrow}} \\
   & \cup \set{ \vec{\alpha} \oplus_{\comp{2}{Q}} q \concat (-1)}{q \in \dom(\comp{2}{Q}),  \vec{\alpha} \in [E]^{\comp{2}{Q}(q)\uparrow}}.
 \end{align*}
 Put $\rep(Q)\res E = (\se{1} \times \rep(\comp{1}{Q})) \cup ( \se{2} \times \rep(\comp{2}{Q}) \res E)$. Then $\rep({Q}) \res E$ is a closed subset of $\rep({Q})$ (in the order topology of $<^Q$).

\begin{mylemma}\label{lem:respect_lv2_measure_one_set}
  Suppose $Q$ is a finite level $\leq 2$ tree,  $C \in \mu_{\mathbb{L}}$ is a club. Then $\vec{\beta} \in [C]^{Q \uparrow}$ iff there exist $f \in \omega_1^{Q \uparrow}$ and $E \in \mu_{\mathbb{L}}$ such that $\vec{\beta} = [f]^Q$ and  for any $q \in \comp{1}{Q}$, $\comp{1}{f}(q)$ is a limit point of $C$;
 for any $q \in \dom(\comp{2}{Q})$, for any $\vec{\alpha} \in [E]^{\comp{2}{Q}_{\tree}(q)\uparrow}$, $\comp{2}{f}_q(\vec{\alpha})$ is a limit point of $C$.
\end{mylemma}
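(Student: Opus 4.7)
The plan is to prove the equivalence in two directions, exploiting continuity of order-preserving maps on $(\rep(Q), <^Q)$ to translate between the stronger condition $f \in C^{Q \uparrow}$ and the weaker condition that head values of $f$ lie in the set of limit points of $C$ on a $\mu^Q$-measure-one set.

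For the forward direction, I would take $f_0 \in C^{Q \uparrow}$ realizing $\vec{\beta} = [f_0]^Q$, set $f = f_0$, and pick $E \in \mathbb{L}$ to be a club of additively closed limit points of $C$. For each $q \in \comp{1}{Q}$, continuity of $\comp{1}{f}$ at $(q)$ expresses $\comp{1}{f}(q)$ as $\sup_n \comp{1}{f}((q,n))$; each summand is a countable ordinal in $C$, so the sup is a limit point of $C$. Similarly, for $q \in \dom(\comp{2}{Q})$ and $\vec{\alpha} \in [E]^{\comp{2}{Q}_{\tree}(q) \uparrow}$, the node $\vec{\alpha} \oplus_{\comp{2}{Q}} q$ is the $<^{\comp{2}{Q}}$-supremum of its cofinal sequence of $(q \concat (-1))$-type successors in $\rep(\comp{2}{Q})$, and all those $f$-values are countable ordinals in $C$; hence by continuity $\comp{2}{f}_q(\vec{\alpha})$ is a limit point of $C$.

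For the backward direction, given $f$ and $E$, I would fix a club $E' \in \mathbb{L}$ with $E' \subseteq E$ and $E'$ consisting of additively closed limit points of $C$. On $\rep(Q) \res E'$ the function $f$ already takes values in $C \cup \se{\omega_1}$ by hypothesis. I would construct $f' \in C^{Q \uparrow}$ by setting $f' = f$ on $\rep(Q) \res E'$ and interpolating values from $C$ on the complement. Since $[E']^{\comp{2}{Q}_{\tree}(q) \uparrow}$ has $\mu^{\comp{2}{Q}_{\tree}(q)}$-measure one for every $q \in \dom(\comp{2}{Q})$, the agreement of $f$ and $f'$ there yields $[f']^Q = [f]^Q = \vec{\beta}$, witnessing $\vec{\beta} \in [C]^{Q \uparrow}$.

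The main obstacle is the interpolation step: the values chosen on the complement $\rep(Q) \setminus (\rep(Q) \res E')$ must be in $C$ and render $f'$ globally order-preserving and continuous. I would proceed by recursion along $<^Q$, of order type at most $\omega_1 + 1$, choosing $f'$ at non-anchor nodes from $C$ interval-by-interval between consecutive anchors; continuity at each $<^Q$-limit anchor node $v$ is secured because $f(v)$ already lies in the set of limit points of $C$, leaving room to pick the cofinal interpolated sequence inside $C$ with supremum $f(v)$.
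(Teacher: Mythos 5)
Your overall architecture matches the paper's: the forward direction is routine, and the backward direction proceeds by replacing $f$ with a $C$-valued, order preserving, continuous function that agrees with $f$ on a measure one set of ``anchor'' nodes, so that both functions represent the same tuple $\vec{\beta}$. However, the step you yourself flag as ``the main obstacle'' is exactly where the real content of the lemma lies, and your justification for it does not work. To interpolate order-preservingly and continuously into $C$ on the interval of $\rep(Q)$ strictly between two consecutive anchors $v^{*} <^Q v = \vec{\alpha}\oplus_{\comp{2}{Q}}q$ (where $\comp{2}{Q}(q) = (P_q,p_q)$ and $p_q \neq -1$), you need $C \cap (f(v^{*}), f(v))$ to have order type at least $\alpha_{p_q^{-}}$, because that is essentially the order type of the block of intermediate nodes $(\vec{\alpha}\concat \beta)\oplus q\concat(a)$, $\beta < \alpha_{p_q^{-}}$, which you must fill in. The hypothesis only tells you that $f(v)$ is a limit point of $C$; this yields a cofinal $\omega$-sequence from $C$ below $f(v)$ but says nothing about the order type of $C$ inside the gap. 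For instance, $f(v)$ could be the $\omega$-th element of $C$ above $f(v^{*})$ while $\alpha_{p_q^{-}}$ is a much larger countable ordinal, and then no continuous order preserving interpolation into $C$ exists. So ``leaving room to pick the cofinal interpolated sequence inside $C$'' is an unjustified assertion, not a consequence of the hypotheses as you have used them.

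The paper closes precisely this gap with a separate claim: there is $E' \in \mu_{\mathbb{L}}$ such that for all $\vec{\alpha} \in [E']^{P_q\uparrow}$ the set $C \cap (\comp{2}{f}_{q^{*}}(\vec{\alpha}), \comp{2}{f}_q(\vec{\alpha}))$ has order type $\alpha_{p_q^{-}}$. The proof is not soft: one uses the uniform cofinality witness $\comp{2}{f}_{q\concat(-1)}$, which exhibits $\comp{2}{f}_q(\vec{\alpha})$ as the supremum of an order preserving $\alpha_{p_q^{-}}$-indexed sequence, to pull the cofinal set $C \cap (\cdot,\cdot)$ back to a cofinal subset of $\alpha_{p_q^{-}}$; if the order type were almost everywhere smaller than $\alpha_{p_q^{-}}$, then \Los{} would make $\seed^{P_q}_{p_q^{-}} = u_k$ singular in $\mathbb{L}$, contradicting the $\mathbb{L}$-regularity of the $u_k$. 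You need this argument (or an equivalent one) before any interpolation can begin. A secondary inaccuracy: $\rep(Q)\res E'$ also contains the nodes $\vec{\alpha}\oplus q\concat(-1)$, about which the hypothesis gives no information on $f$, so it is not true that $f$ already maps $\rep(Q)\res E'$ into $C$; those nodes must be reassigned as well. The paper handles this by building a fresh $f' : \rep(Q)\res E' \to C$ agreeing with $f$ only at the anchor nodes over a further-shrunk club $E''$, and then composing with the order isomorphism $\theta : \rep(Q) \to \rep(Q)\res E'$, which is the identity on the measure one set $\rep(Q)\res E''$, so that $[f'\circ\theta]^Q = [f]^Q$.
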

\begin{proof}
The nontrivial direction is $\Leftarrow$.  Suppose $f \in \omega_1^{Q \uparrow}$ and  $E \in \mu_{\mathbb{L}}$ are as given. For $q \in \dom(\comp{2}{Q})\setminus\se{\emptyset}$,  let $\comp{2}{Q}(q)= (P_q,p_q)$, and let $q^{*}$ be the $<_{BK}$-maximum of $\comp{2}{Q}\{q,-\}$.  
\begin{myclaim}
  \label{claim:respect_lv2_distance}
  There is $E' \in \mu_{\mathbb{L}}$ such that $E' \subseteq E$ and for any $q \in \dom(\comp{2}{Q})\setminus\se{\emptyset}$, 
for any $\vec{\alpha} \in [E']^{P_q\uparrow}$, if $p_q \neq -1$ then 
  $C \cap (\comp{2}{f}_{q^{*}}(\vec{\alpha}), \comp{2}{f}_q (\vec{\alpha}))$ has order type $\alpha_{p_q^{-}}$.
\end{myclaim}
\begin{proof}[Proof of Claim~\ref{claim:respect_lv2_distance}]
  Otherwise, there is $q \in \dom(\comp{2}{Q}) \setminus\se{\emptyset}$ such that $p_q \neq -1$ and for $\mu^{P_q}$-a.e.\ $\vec{\alpha}$, $C \cap (\comp{2}{f}_{q^{*}} (\vec{\alpha}), \comp{2}{f}_q(\vec{\alpha}))$ has order type smaller than $\alpha_{p_q^{-}}$.
However, by assumption, $C \cap (\comp{2}{f}_{q^{*}} (\vec{\alpha}), \comp{2}{f}_q(\vec{\alpha}))$  is cofinal in $\comp{2}{f}_q(\vec{\alpha})$, and $\comp{2}{f}_{q \concat (-1)}$ witnesses that $\comp{2}{f}_q$ has uniform cofinality $p_q^{-}$. This leads to a function $h \in \mathbb{L}$ where for $\mu^{P_q}$-a.e.\ $\vec{\alpha}$, $h(\vec{\alpha})$ is a cofinal sequence in $\alpha_{p_q^{-}}$ of order type $< \alpha_{p_q^{-}}$. Hence, 
 $\cf^{\mathbb{L}}( \seed^{P_q}_{p_q^{-}}) < \seed^{P_q}_{p_q^{-}}$ by \Los{}, which is absurd. 
\end{proof}
Fix $E'$ as in Claim~\ref{claim:respect_lv2_distance}. 
We are able to define $f' : \rep({Q})\res E' \to C$ such that $f(1, q) = f' (1, q)$ for $q \in \comp{1}{Q}$,
 $f (2, \vec{\alpha} \oplus_{\comp{2}{Q}} q) = f'(2, \vec{\alpha} \oplus_{\comp{2}{Q}} q)$ for $q \in \dom(\comp{2}{Q})\setminus\se{\emptyset}$, $\vec{\alpha} \in [E'']^{P_q\uparrow}$. 
  Let $\theta: \rep(Q) \to \rep(Q) \res E'$ be an order preserving bijection. 
 Let $E'' \in \mu_{\mathbb{L}}$ where $\eta \in E''$ iff $E' \cap \eta$ has order type $\eta$. 
It is easy to see that $\theta\res (\rep(Q) \res E'') $ is the identity map. Define $g = f' \circ \theta$. Then $g \in C^{Q \uparrow}$ and $[g]^Q = [f]^Q$.
\end{proof}

\begin{mylemma}
  \label{lem:level_2_Q_respecting_function_sort}
  Suppose $Q$ is a finite level $\leq 2$ tree, $\comp{2}{Q}(q) = (P_q, p_q)$ for $q \in \dom(Q)$, $E \in \mu_{\mathbb{L}}$ is a club. Suppose $f : \rep(Q) \res E \to \omega_1+1$ satisfies
  \begin{enumerate}
  \item $f \res( \se{1} \times  \rep(\comp{1}{Q}))$ is continuous, order preserving;
  \item if $q \in \dom(\comp{2}{Q})$, then the potential partial level $\leq 1$ tower induced by $\comp{2}{f}_{q}$ is $\comp{2}{Q}[q]$, the approximation sequence of $\comp{2}{f}_q$ is $(\comp{2}{f}_{q \res i})_{i \leq \lh(q)}$,
and the uniform cofinality of $\comp{2}{f}_q$ on $[E]^{P_q\uparrow}$ is witnessed by $\comp{2}{f}_{q \concat (-1)}$, i.e., if $\vec{\alpha} \in [E]^{P_q \uparrow}$, then $\comp{2}{f}_q(\vec{\alpha}) = \sup\{ \comp{2}{f}_{q \concat (-1)}(\vec{\alpha} \concat (\beta)) : \vec{\alpha} \concat (\beta) \in \rep(\comp{2}{Q}) \res E \}$, and the map $\vec{\beta} \mapsto \comp{2}{f}_{q \concat (-1)}(\vec{\alpha} \concat (\beta))$ is continuous, order preserving;
  \item if $a,b \in \comp{2}{Q}\se{q}$ and $a < _{BK} b$, then $[f_{q \concat (a)}]_{\mu^{P_{q \concat (a)}}} < [f_{q \concat (b)}]_{\mu^{P_{q \concat (b)}}}$. 
  \end{enumerate}
Then there is $E' \in \mu_{\mathbb{L}}$ such that $E' \subseteq E$ and $f \res (\rep(Q) \res E')$ is order preserving.
\end{mylemma}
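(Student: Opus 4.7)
The plan is to build $E'$ as the intersection of $E$ with a finite collection of $\mu_{\mathbb{L}}$-clubs, one per ``comparison type'' of ordered pair $(v_1, v_2) \in \rep(Q)^2$. Since $Q$ is finite, the set of slot-labels --- the $q \in \comp{1}{Q}$ together with the $q \in \bardom(\comp{2}{Q})$ --- is finite, so there are only finitely many types to handle. For each pair of types $(q_1, q_2)$ I produce a club $E_{q_1,q_2}$ on which $f$ respects $<^Q$ between coordinates of those types, and set $E' = E \cap \bigcap_{q_1,q_2} E_{q_1,q_2}$.

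Comparisons internal to the level-1 slots are handled directly by hypothesis (1). For two elements drawn from the same level-2 slot $q$, the operation $\vec{\alpha} \oplus_{\comp{2}{Q}} q$ extracts precisely the coordinates indexed by $(Q_{\node}(q \res l))_{l < \lh(q)}$; by hypothesis (2), the signature of $\comp{2}{f}_q$ is exactly this tuple, so a single $\mu^{P_q}$-measure-one set supplies strict monotonicity of $\comp{2}{f}_q$ in the BK order on those coordinates, and a standard application of remarkability of $\mu_{\mathbb{L}}$ produces a club witness in $\mu_{\mathbb{L}}$. Comparisons between $q$ and the continuous-type $q \concat (-1)$ follow directly from the uniform-cofinality clause of (2), which supplies continuity and order preservation in the extra coordinate. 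The separation of level-1 values below level-2 values is obtained by shrinking $E'$: $\comp{1}{f}$ takes finitely many countable-ordinal values by hypothesis (1), while the values $\comp{2}{f}_q(\vec{\alpha})$ can be pushed above any prescribed threshold by restricting to tuples $\vec{\alpha}$ of sufficiently large indiscernibles, using the monotonicity already established.

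The delicate remaining case is a comparison between coordinates drawn from distinct slots $q_1 \neq q_2$ in $\bardom(\comp{2}{Q})$. Passing to their longest common prefix, the $<^Q$-comparison reduces to the sibling case of $q \concat (a)$ versus $q \concat (b)$ with $a <_{BK} b$, followed by monotonicity along further extensions (handled above). The main obstacle is that hypothesis (3) compares $[\comp{2}{f}_{q \concat (a)}]_{\mu^{P_{q \concat (a)}}}$ and $[\comp{2}{f}_{q \concat (b)}]_{\mu^{P_{q \concat (b)}}}$, which live in different ultrapower spaces. I resolve this by lifting both sides to a common refining level-1 tree $P^{*}$ containing both $P_{q \concat (a)}$ and $P_{q \concat (b)}$, via the factor maps $j^{P_{q \concat (a)}, P^{*}}$ and $j^{P_{q \concat (b)}, P^{*}}$; by \Los{}, the ordinal inequality translates to a $\mu^{P^{*}}$-measure-one pointwise inequality, and by remarkability of $\mu_{\mathbb{L}}$ (as in the proof of Lemma~\ref{lem:beta_q_unambiguous}) we descend to a single $\mu_{\mathbb{L}}$-club $E_{q_1,q_2}$ on which the pointwise comparison holds. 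Intersecting this with the earlier clubs and with $E$ yields the desired $E'$.
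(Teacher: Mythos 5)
Your decomposition into comparison types, and your handling of the within-slot, $q$-versus-$q\concat(-1)$, and level-1 cases, are in order, but your treatment of the sibling case has a genuine gap, and it is exactly the point to which the paper's proof is devoted. Two elements $\vec{\alpha}\concat(\beta)\oplus_{\comp{2}{Q}}(q\concat(a))$ and $\vec{\alpha}\concat(\beta')\oplus_{\comp{2}{Q}}(q\concat(b))$ are compared in $<^{Q}$ by the \emph{interleaved} Brouwer--Kleene order: the new ordinal coordinates $\beta,\beta'$ (sitting at position $Q_{\node}(q)$) are compared \emph{before} the labels $a,b$. So when $\beta<\beta'$ the first element precedes the second for \emph{every} pair of siblings, including $a>_{BK}b$, and one must prove $f_{q\concat(a)}(\vec{\alpha}\concat(\beta))<f_{q\concat(b)}(\vec{\alpha}\concat(\beta'))$ in that case too. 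Your argument extracts from hypothesis~(3), via \Los{}, only the pointwise inequality $f_{q\concat(a)}(\vec{\gamma})<f_{q\concat(b)}(\vec{\gamma})$ at a \emph{common} argument, which settles the sub-case $\beta=\beta'$ with $a<_{BK}b$; when $a>_{BK}b$ the pointwise inequality supplied by (3) points the wrong way and is of no use. The missing step is the one the paper carries out: since $\sup_{\beta'}f_{q\concat(b)}(\vec{\alpha}\concat(\beta'))=f_{q}(\vec{\alpha})>f_{q\concat(a)}(\vec{\alpha}\concat(\beta))$ (by the approximation-sequence clause of hypothesis~(2) together with strict monotonicity in the last signature coordinate), there is a threshold function $h\in\mathbb{L}$, depending only on the coordinates below the insertion point and on $\beta$, past which the $b$-side overtakes the $a$-side; one then uses remarkability to close a club $E''_{q}$ under $h$, so that any two club points $\beta<\beta'$ are automatically separated by the threshold. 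Without this step your final $E'$ need not make $f$ order preserving.

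Two smaller inaccuracies. First, there is no need to lift hypothesis~(3) to a common refining level-1 tree: since $(\comp{2}{Q}(q\concat(a)\res l))_{l\leq\lh(q)+1}$ is a partial level $\leq 1$ tower of discontinuous type, $\comp{2}{Q}_{\tree}(q\concat(a))$ is the completion of $\comp{2}{Q}(q)$ for every child $a$, so $P_{q\concat(a)}=P_{q\concat(b)}$ and the two equivalence classes already live in the same ultrapower. Second, $\rep(\comp{1}{Q})$ is countably infinite, not finite, so $\comp{1}{f}$ does not take finitely many values; the separation of the level-1 block from the level-2 block still goes through because $\ran(\comp{1}{f})$ is a countable, hence bounded, subset of $\omega_1$.
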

\begin{proof}
We know  by assumption that  for $\mu^{P_q}$-a.e.\ $\vec{\alpha}$, $f_q ( \vec{\alpha}) = \sup \{f_{q \concat (a)}(\vec{\alpha} \concat (\beta)) : \beta < \alpha_{p_q^{-}}\}$. Fix for the moment $q$ such that $p_q \neq -1$. 
For $\vec{\alpha} = (\alpha_p)_{p \in P_q}$, put $\vec{\alpha}^{-} = (\alpha_p)_{p <_{BK}p_k^{-}}$.
By remarkability of (level-1) sharps, there is a function $h\in \mathbb{L}$ and $E'_q \in \mu_{\mathbb{L}}$ such that 
for any $\vec{\alpha}\in [E'_q]^{P_q\uparrow}$, $h(\vec{\alpha}^{-})< \alpha_{p_q^{-}}$ and for any $\beta\in \alpha_{p_q^{-}}\cap E'_q$,  for any $a, b \in \comp{2}{Q}\se{q}$,   $f_{q \concat (a)}(\vec{\alpha}\concat (\beta)) < f_{q \concat (b)} (\vec{\alpha}\concat (h(\vec{\alpha}^{-})))$. Let $\eta \in E''_q$ iff for any $\vec{\alpha} \in [E'_q]^{P_q\uparrow}$, if $\forall p<_{BK} p_k^{-}  ~ \alpha_p < \eta$ then $h(\vec{\alpha}^{-}) < \eta$. Finally,  let $E'' = \bigcap \set{E''_q}{p_q \neq -1}$.  $E''$ works for the lemma.
\end{proof}

\begin{mylemma}
  \label{lem:Q_respecting}
  Suppose that $Q$ is a finite level $\leq 2$ tree and $\vec{\beta}= (\comp{d}{\beta}_q)_{(d,q)\in \dom(Q)}$ is a tuple of ordinals in $u_{\omega}$. Then $\vec{\beta}$ respects $Q$ iff all of the following holds:
  \begin{enumerate}
  \item $(\comp{1}{\beta}_q)_{q \in \comp{1}{Q}} $ respects $\comp{1}{Q}$. 
  \item For any $q \in \dom(\comp{2}{Q})$, the potential partial level $\leq 1$ tower induced by $\comp{2}{\beta}_q$ is $Q[q]$, and the  approximation sequence of $\comp{2}{\beta}_q$ is  $(\comp{2}{\beta}_{q \res l})_{  l \leq \lh(q)}$.
    \item If $a,b\in \comp{2}{Q}\se{q}$ and $a<_{BK}b$ then $\comp{2}{\beta}_{q \concat (a)} < \comp{2}{\beta}_{q \concat (b)}$.
  \end{enumerate}
Moreover, if $C \in \mu_{\mathbb{L}}$ is a club, then $\vec{\beta} \in [C]^{Q\uparrow}$ iff $\vec{\beta}$ respects $Q$ and letting $C'$ be the set of limit points of $C$, then $\comp{1}{\beta}_q \in C'$ for $ q \in \comp{1}{Q}$, $\comp{2}{\beta}_q \in j^{\comp{2}{Q}_{\tree}(q)} (C')$ for $q \in \dom(\comp{2}{Q})$.
\end{mylemma}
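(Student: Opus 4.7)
The plan is to prove each direction of the biconditional separately and then deduce the ``moreover'' clause by controlling ranges in the construction. The forward direction is essentially unpacking Definitions~\ref{def:signature_etc_level_1} and~\ref{def:ordinal_long_division}; the reverse direction is the substantive step. For the forward direction, suppose $\vec\beta = [f]^Q$ with $f \in \omega_1^{Q\uparrow}$. Then $\comp{1}{f}$ immediately witnesses (1). For (2), fix $q \in \dom(\comp{2}{Q})$ and read off the potential partial level $\leq 1$ tower and approximation sequence of $\comp{2}{\beta}_q = [\comp{2}{f}_q]_{\mu^{P_q}}$ directly from continuity and order preservation of $f$ at each node $\vec\alpha \oplus_{\comp{2}{Q}} (q\res l)$; continuity forces $\comp{2}{f}_{q\res l}$ to be the pointwise supremum of $\comp{2}{f}_{q\res l\concat(s)}$-values along the $<^{\comp{2}{Q}}$-cofinal approach, which dictates the signature, uniform cofinality, and approximation sequence. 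Condition (3) is immediate from order preservation of $f$ applied to $\vec\alpha \oplus_{\comp{2}{Q}} q\concat(a)$ vs.\ $\vec\alpha \oplus_{\comp{2}{Q}} q\concat(b)$.

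For the reverse direction, given conditions (1)--(3), I would construct $f \in \omega_1^{Q\uparrow}$ with $[f]^Q = \vec\beta$ as follows. Use (1) to choose $\comp{1}{f}$ representing $(\comp{1}{\beta}_q)_{q \in \comp{1}{Q}}$. For the level-2 part, (2) guarantees, for each $q$, that any representative of $\comp{2}{\beta}_q$ has signature and approximation sequence compatible with $Q[q]$ and $(\comp{2}{\beta}_{q\res l})_l$. The main obstacle is coherence: to apply Lemma~\ref{lem:level_2_Q_respecting_function_sort} one needs representatives $(g_q)_{q}$ and $(g_{q\concat(-1)})_{q}$ for which the approximation identities and the uniform-cofinality witnessing relation between $g_q$ and $g_{q\concat(-1)}$ hold literally on a single common $\mu_{\mathbb{L}}$-club $E$, not merely modulo the relevant measures. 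The plan is to process $q$ in order of increasing $\lh(q)$, using the level-1 ultrapower to pick $g_q$ as the actual approximating function of each $g_{q\concat(a)}$ and then thinning the club so these choices agree on $E$; the routine but bookkeeping-heavy step here mirrors the remarkability argument in the proof of Lemma~\ref{lem:level_2_Q_respecting_function_sort}. Condition (3) supplies the ordering hypothesis of that lemma, so the lemma provides $E' \subseteq E$ on which the piecewise-defined $f : \rep(Q) \res E' \to \omega_1+1$ is order preserving. Composing with an order isomorphism $\theta : \rep(Q) \to \rep(Q) \res E'$ as in the last step of the proof of Lemma~\ref{lem:respect_lv2_measure_one_set} yields a function in $\omega_1^{Q\uparrow}$ still representing $\vec\beta$.

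For the moreover clause, the forward direction is precisely Lemma~\ref{lem:respect_lv2_measure_one_set}. For the converse, assume the range conditions and rerun the reverse-direction construction, now choosing each $g_q$ to take values in $C'$ (which is itself $\mu_{\mathbb{L}}$-measure one) from the outset; since $\comp{2}{\beta}_q \in j^{\comp{2}{Q}_{\tree}(q)}(C')$ by hypothesis, $\Los{}$ furnishes such a representative. Intersecting with $C'$ at each thinning step preserves this property, and the order isomorphism $\theta$ acts as the identity on $\rep(Q) \res E''$ for a suitable $E'' \in \mu_{\mathbb{L}}$, so the final $f$ lies in $C^{Q\uparrow}$, giving $\vec\beta \in [C]^{Q\uparrow}$.
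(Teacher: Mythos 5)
Your proposal is correct and is essentially the argument the paper intends: the paper omits the proof of this lemma as ``routine,'' but Lemmas~\ref{lem:respect_lv2_measure_one_set} and~\ref{lem:level_2_Q_respecting_function_sort} are proved immediately beforehand precisely to supply the reverse direction and the ``moreover'' clause in the way you use them (choose representatives, cohere them on a common club via the approximation-sequence and uniform-cofinality conditions, apply the sorting lemma, and pull back along the order isomorphism $\theta$ which is the identity on $\rep(Q)\res E''$). Your identification of the coherence of the representatives $(g_q)_q$ as the only nontrivial bookkeeping, and your use of \L o\'s to convert $\comp{2}{\beta}_q \in j^{\comp{2}{Q}_{\tree}(q)}(C')$ into a representative with values in $C'$, are exactly right.
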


\begin{mylemma}
  \label{lem:respecting_level_2_tree_is_Delta13}
  The relation ``$Q$ is a finite level $\leq 2$ tree $\wedge$ $\vec{\beta}$ respects $Q$'' is $\Delta^1_3$.
\end{mylemma}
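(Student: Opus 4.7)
The plan is to reduce the claim to Lemma~\ref{lem:Q_respecting} and then observe that each of the three conditions it provides is already manifestly $\Delta^1_3$ in the data $(Q,\vec\beta)$. Being a finite level $\leq 2$ tree is an arithmetic (hence trivially $\Delta^1_3$) property of a finite object, so the substance lies entirely in verifying the conjuncts (1)--(3) of Lemma~\ref{lem:Q_respecting}.

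First I would dispatch condition (1): since $\comp{1}{Q}$ is a finite regular level-1 tree and each $\comp{1}{\beta}_q$ is a countable ordinal coded by a sharp code (i.e.\ an element of $\WO$), the assertion that $(\comp{1}{\beta}_q)_{q \in \comp{1}{Q}}$ respects $\comp{1}{Q}$ amounts to a finite Boolean combination of equalities and inequalities $\wocode{\cdot}<\wocode{\cdot}$ plus the ``limit ordinal'' clause, all of which are $\Delta^1_2$ in the sharp codes and hence $\Delta^1_3$. Condition (3) is likewise a finite conjunction of comparisons $\comp{2}{\beta}_{q\concat(a)} < \comp{2}{\beta}_{q\concat(b)}$ between ordinals below $u_\omega$, and comparison of ordinals in $u_\omega$ is $\Delta^1_3$ uniformly in their sharp codes.

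The main content is condition (2), which demands that for each $q \in \dom(\comp{2}{Q})$ the potential partial level $\leq 1$ tower induced by $\comp{2}{\beta}_q$ equals $Q[q]$, and that the approximation sequence of $\comp{2}{\beta}_q$ is $(\comp{2}{\beta}_{q \res l})_{l \leq \lh(q)}$. This is exactly what was flagged in the remark following Definition~\ref{def:ordinal_long_division}: the signature, approximation sequence, essential continuity, uniform cofinality, and therefore the induced potential partial level $\leq 1$ tower of an ordinal $\beta < u_\omega$ are $\Delta^1_3$ in (the sharp code of) $\beta$ uniformly. This ``effectivized Kunen analysis'' is the content of \cite{sol_delta13coding}. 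So the predicate ``the potential partial level $\leq 1$ tower induced by $\comp{2}{\beta}_q$ equals the (finite, arithmetically specified) object $Q[q]$'' is $\Delta^1_3$, as is equality ``$\comp{2}{\beta}_{q \res l}$ equals the $l$-th term of the approximation sequence of $\comp{2}{\beta}_q$''.

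Putting these pieces together via Lemma~\ref{lem:Q_respecting} yields a $\Delta^1_3$ characterization of ``$\vec\beta$ respects $Q$'' over finite level $\leq 2$ trees $Q$. The expected obstacle is purely bookkeeping: one must check that the uniformity in $\beta$ for the signature/approximation data from \cite{sol_delta13coding} is genuinely uniform across the finitely many indices $q$ in $\dom(\comp{2}{Q})$ simultaneously, but since $Q$ is finite this is automatic from taking a finite conjunction of the basic $\Delta^1_3$ predicates.
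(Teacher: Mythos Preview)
Your argument is correct and is precisely the routine verification the paper has in mind: the paper does not give a proof of this lemma at all, instead grouping it with Lemmas~\ref{lem:respect_lv2_measure_one_set}--\ref{lem:unique_level_2_tree_represent} under the blanket remark that these are ``essentially part of effectivized Kunen's analysis \cite{sol_delta13coding}'' with ``rather routine'' proofs. Your reduction via Lemma~\ref{lem:Q_respecting} together with the $\Delta^1_3$ uniformity of the signature/approximation-sequence data recorded after Definition~\ref{def:ordinal_long_division} is exactly the intended argument.
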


\begin{mylemma}
  \label{lem:unique_level_2_tree_represent}
  Suppose $Q$ and $Q'$ are  level $\leq 2$ trees with the same domain.  Suppose $\vec{\beta}$ respects both $Q$ and $Q'$. Then $Q = Q'$. 
\end{mylemma}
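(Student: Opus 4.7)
The plan is to unpack the definitions and read off the equality $Q = Q'$ from the fact that the potential partial level $\leq 1$ tower attached to each coordinate of $\vec{\beta}$ is intrinsic to the coordinate itself, so it cannot disagree between $Q$ and $Q'$.

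First I would deal with the level-1 components. Since a level-1 tree is literally a subset of $\omega^{<\omega}$, we have $\comp{1}{Q} = \dom(\comp{1}{Q})$, and similarly for $Q'$. The hypothesis $\dom(Q) = \dom(Q')$ gives $\dom(\comp{1}{Q}) = \dom(\comp{1}{Q'})$, hence $\comp{1}{Q} = \comp{1}{Q'}$. (Nothing here uses $\vec{\beta}$; the point is that the level-1 part is recoverable from its domain alone.)

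Next I would handle the level-2 components. We still need to show $\comp{2}{Q}(q) = \comp{2}{Q'}(q)$ for every $q \in \dom(\comp{2}{Q}) = \dom(\comp{2}{Q'})$. Apply clause (2) of Lemma~\ref{lem:Q_respecting} to $\vec{\beta}$ and to each of the trees $Q,Q'$: this gives that the potential partial level $\leq 1$ tower induced by the ordinal $\comp{2}{\beta}_q$ equals $Q[q]$ and also equals $Q'[q]$. But the potential partial level $\leq 1$ tower induced by an ordinal was defined in Definition~\ref{def:ordinal_long_division} purely in terms of the ordinal itself (signature, essential continuity, uniform cofinality), and the remark following that definition records that it is unique. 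Therefore $Q[q] = Q'[q]$ as potential partial level $\leq 1$ towers.

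Finally, by the definition $Q[q] = (Q_{\tree}(q), (Q_{\node}(q \res l))_{l \leq \lh(q)})$, the equality $Q[q] = Q'[q]$ gives both $Q_{\tree}(q) = Q'_{\tree}(q)$ and $Q_{\node}(q \res l) = Q'_{\node}(q \res l)$ for every $l \leq \lh(q)$, in particular for $l = \lh(q)$. Hence $Q(q) = (Q_{\tree}(q), Q_{\node}(q)) = (Q'_{\tree}(q), Q'_{\node}(q)) = Q'(q)$ for all $q \in \dom(\comp{2}{Q})$, so $\comp{2}{Q} = \comp{2}{Q'}$ and thus $Q = Q'$. There is no genuine obstacle here; the whole content sits in Lemma~\ref{lem:Q_respecting}(2) together with the uniqueness built into Definitions~\ref{def:signature_etc_level_1} and~\ref{def:ordinal_long_division}. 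The lemma is really a bookkeeping statement asserting that the combinatorial data of $Q$ is completely encoded in the tuple $\vec{\beta}$ it is supposed to describe.
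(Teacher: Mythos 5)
Your argument is correct. Note that the paper does not actually supply a proof of this lemma --- it is one of the statements dismissed as ``rather routine'' consequences of the effectivized Kunen analysis --- so there is no official proof to compare against; your derivation (level-1 components recovered from the domain alone, level-2 components recovered coordinatewise from Lemma~\ref{lem:Q_respecting}(2) via $Q[q]=Q'[q]$, hence $Q(q)=Q'(q)$ by reading off the $l=\lh(q)$ entry) is exactly the intended bookkeeping. The only point I would tighten: the remark after Definition~\ref{def:ordinal_long_division} explicitly asserts representation-independence only for the signature, the approximation sequence, and essential continuity of $\beta$, not verbatim for the induced potential partial level $\leq 1$ tower. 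You should add one sentence observing that the potential tower is nonetheless intrinsic to $\beta$, since it is built canonically from the order type of the signature (which fixes the level-1 tower $(P_i)_{i\leq m}$ and the nodes $(p_i)_{i<m}$, because a new node of a finite level-1 tree is uniquely determined by its prescribed $<_{BK}$-position), from essential continuity, and from the uniform cofinality, which equals $\cf^{\mathbb{L}}(\beta)$ and hence singles out the index $i$ with $\seed^W_{w_i}=\cf^{\mathbb{L}}(\beta)$, determining the final entry $p^{+}$ (or $-1$) of $\vec{p}$ in the discontinuous case. With that sentence added, the proof is complete.
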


If $y \in [\dom(Q)]$, let $Q(y) \DEF  \cup_{n<\omega} {{Q}_{\tree}(y \res n)}$ be an infinite level-1 tree. 
$Q$ is \emph{$\Pi^1_2$-wellfounded} iff
\begin{enumerate}
\item $\forall q \in \dom(Q) ~Q\se{q}$ is $\Pi^1_1$-wellfounded,
\item $\forall y \in [\dom(Q)] ~ Q(y)$ is not $\Pi^1_1$-wellfounded.
\end{enumerate}
In particular, finite level-2 trees are $\Pi^1_2$-wellfounded. $\Pi^1_2$-wellfoundedness of a level-$2$ tree is a $\boldpi{2}$ property in the real coding the tree. 

A level-2 tree $Q$ is a called a \emph{subtree} of $Q'$ iff $Q$ is a subfunction of $Q'$.  
A \emph{finite level-2 tower} is a (possibly empty) sequence $(Q_i)_{1 \leq i \leq n}$ such that   $Q_i$ is a level-$2$ tree for $1 \leq i \leq n$, $\card(Q_i) = i$ and $i<j \to Q_i$ is a subtree of $Q_{j}$. 
An \emph{infinite level-2 tower} is a sequence $\vec{Q}=(Q_n)_{1 \leq n<\omega}$ such that for each $n$, $(Q_i)_{ 1 \leq i \leq n}$ is a finite level-2 tower.
%
\emph{A level-2 system} is $(Q_s)_{s \in \omega^{<\omega}}$ such that for each $s$, $(Q_{s\res i})_{1 \leq i<\lh(s)}$ is a finite level-2 tower.
Associated to a $\boldpi{2}$ set $A$ we can assign a level-2 system  $(Q_s)_{s \in \omega^{<\omega}}$ so that  $x\in A$ iff the level-2 tower $ Q_x \DEF  (Q_{x \res n})_{n<\omega}$ is $\Pi^1_2$-wellfounded. If $A$ is lightface $\Pi^1_2$, then $(Q_s)_{s \in \omega^{<\omega}}$   can be picked effective.


In our language, the level-2 tree $S_2$, originally defined in  \cite[Section 2]{kechris_homo_tree_cabal},  takes the following form. 
\begin{mydefinition}
  \label{def:S_2}
Assume $\boldpi{1}$-determinacy. 
  \begin{enumerate}
  \item 
$S_2^{-}$ is the tree on $V_{\omega} \times u_\omega$ such that $(\emptyset,\emptyset) \in S_2^{-}$ and a nonempty node
    \begin{displaymath}
      (\emptyset,\emptyset) \neq  (\vec{Q}, \vec{\alpha} )  = ( (Q_i)_{1 \leq  i\leq n}, (\alpha_i)_{1 \leq  i \leq n}  ) \in S_2^{-}
    \end{displaymath}
    iff $\vec{Q}$ is a finite level-2 tower, and putting $Q_0 = \emptyset$, $q_i \in \dom(Q_{i+1}) \setminus \dom(Q_i)$,  $\beta_{q_i} = \alpha_i$, then  $(\beta_q)_{q \in \dom(Q_n)}$ respects $Q_n$.
  \item   
$S_2$ is the tree on $V_{\omega} \times u_\omega$ such that $(\emptyset,\emptyset) \in S_2^{-}$ and a nonempty node
    \begin{displaymath}
      (\emptyset,\emptyset) \neq  (\vec{Q}, \vec{\alpha} )  = ( (Q_i)_{1 \leq  i\leq n}, (\alpha_i)_{1 \leq  i \leq n}  ) \in S_2^{-}
    \end{displaymath}
    iff $\vec{Q}$ is a finite level-2 tower, and putting $Q_0 = \emptyset$, $q_i \in \dom(Q_{i+1}) \setminus \dom(Q_i)$,  $\beta_{q_i} = \alpha_i$, then  $(\beta_q)_{q \in \dom(Q_n)}$ weakly respects $Q_n$.
  \end{enumerate}
\end{mydefinition}
 By Theorem~\ref{thm:martin_pi11_determinacy},
 \begin{displaymath}
   p[S_2^{-}] = p[S_2] = \set{\vec{Q}}{\bigcup \vec{Q} \text{ is $\Pi^1_2$-wellfounded} }.
 \end{displaymath}
The (non-regular) $u_{\omega}$-scale associated to $S_2$ is $\Delta^1_3$ (cf. \cite{kechris_homo_tree_cabal}).

\section{More on the level-2 analysis}
\label{sec:level-2-embedding}

\subsection{Homogeneity properties of $S_2$}
\label{sec:measures-on-u_omega}
By \cite[Lemma 14.2]{Q_theory}, $\admistwobold$ is admissibly closed. We shall define a system of $\admistwobold$-measures on finite tuples in $u_{\omega}$. This system of $\admistwobold$-measures will witness $S_2$ being $\admistwobold$-homogeneous. Under AD, these $\admistwobold$-measures are total measures induced from the strong partition property on $\omega_1$ (cf. \cite{kechris_homo_tree_cabal}). These measures enable  the Martin-Solovay tree construction of $S_3$ projecting to the universal $\Pi^1_3$ set, to be redefined in Section~\ref{sec:weak-partition-LT3-measures}.  In our situation, we must recast the effective version of the proof of the strong partition property on $\omega_1$. 
Only subsets of $\omega_1$ in $\mathbb{L}$ will be colored, and the coloring must be guided by a level-2 tree $Q$ and a subset $A$ of $ [\omega_1]^{Q \uparrow}$ which lies in $\admistwobold$. 





\begin{mydefinition}
  \label{def:strong_partition_omega_1}
  $\omega_1$ has the \emph{level-2 strong partition property} iff  for every finite level $\leq 2$ tree $Q$, for every $A \in \admistwobold$, there is a club $C \subseteq \omega_1$, $C \in \mathbb{L}$ such that either $[C]^{Q \uparrow} \subseteq A$ or $[C]^{Q \uparrow} \cap A = \emptyset$.
\end{mydefinition}


Martin's proof of the strong partition property on $\omega_1$ under AD carries over in a trivial way. For the reader's convenience, we include a proof.
\begin{mytheorem}[Martin]
  \label{thm:strong_partition_omega_1}
  Assume $\boldDelta{2}$-determinacy. Then  $\omega_1$ has the level-2 strong partition property.
\end{mytheorem}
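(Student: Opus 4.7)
The plan is to adapt Martin's classical game-theoretic proof of the strong partition property $\omega_1 \to (\omega_1)^{\omega_1}_2$ (under AD) to the effective setting here, leveraging $\boldDelta{2}$-determinacy and the structural bound $A \in \admistwobold$. Fix a finite level $\leq 2$ tree $Q$ and $A \subseteq [\omega_1]^{Q\uparrow}$ with $A \in \admistwobold$. Since $(\rep(Q); <^Q)$ has order type $\omega_1 + 1$ and its discontinuity points form a $<^Q$-cofinal countable subset, every $f \in [\omega_1]^{Q\uparrow}$ is determined by its values at a canonically enumerated sequence $(t_n)_{n<\omega}$ of such points.

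I would define a game $G_A$ in which Players I and II alternate playing reals that code countable ordinals via $\Pi^1_1$ codes for wellorderings of $\omega$; the $n$-th ordinal played specifies a proposed value $f(t_n)$. The rules enforce that the sequence of proposals is extendible to a valid $f \in \omega_1^{Q \uparrow}$ respecting the partial level $\leq 1$ tower structure of $Q$ (Lemmas~\ref{lem:respect_lv2_measure_one_set}--\ref{lem:Q_respecting}); the first player violating the rules loses, and on legal play Player II wins iff the resulting $f$ lies in $A$. To apply $\boldDelta{2}$-determinacy, I would replace $G_A$ by an auxiliary game $G'_A$ in which the player claiming ``$f \in A$'' plays in parallel a real $z$ encoding a witness for this membership. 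By Theorem~\ref{thm:BK-KM}, $f \in A$ is equivalent to $\admistwo{x} \models \varphi(T_2, f, x)$ for some $\Sigma_1$ formula $\varphi$ and some real $x$, and any such $\Sigma_1$ statement is witnessed by a countable $x$-admissible structure codable by $z$. The payoff of $G'_A$ then reduces to the conjunction of a $\boldpi{1}$ wellfoundedness condition verifying that $z$ codes a genuine admissible extending $T_2$, and a $\boldsigma{1}$ condition verifying the $\Sigma_1$ formula inside it; thus the payoff is $\boldDelta{2}$ and $G'_A$ is determined.

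From a winning strategy $\sigma$ for one player, I would extract the homogeneous club $C \subseteq \omega_1$ in the style of Martin's original argument: $C$ consists of the ordinals $\alpha < \omega_1$ closed under $\sigma$, in the sense that plays against $\sigma$ using ordinal inputs below $\alpha$ yield outputs below $\alpha$. By Martin's measure on the Turing cone, the closure ordinals of $\sigma$ contain a club in $\mathbb{L}$; take $C$ to be such a club. Homogeneity then follows: for any $f \in \omega_1^{Q\uparrow}$ taking values in $C$, playing along with $\sigma$ produces exactly $f$ together with the appropriate witness, so $\sigma$'s victory certifies $f \in A$ (or $f \notin A$) uniformly. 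The main obstacle is the complexity-reduction step for $G'_A$: one must verify that the Becker--Kechris--Martin framework of Theorem~\ref{thm:BK-KM} allows encoding the $\Sigma_1$-witness as a real via the $\Pi^1_1$-iterability of admissibles above $T_2$, and that the ``faithful encoding'' clause is indeed $\boldDelta{2}$. A secondary difficulty is the bookkeeping of the tree structure of $Q$ throughout the game, for which Lemmas~\ref{lem:level_2_uniform_cofinality} and~\ref{lem:level_2_uniform_cofinality_another} supply the framework ensuring that legal plays genuinely correspond to $\omega_1^{Q \uparrow}$-functions.
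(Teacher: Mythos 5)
There is a genuine gap, and it is at the heart of the argument. Your game has the players produce countably many ordinals, one per round, on the premise that ``every $f \in \omega_1^{Q\uparrow}$ is determined by its values at a canonically enumerated sequence $(t_n)_{n<\omega}$'' of discontinuity points. This premise is false: $\rep(\comp{2}{Q})$ has order type $\omega_1+1$, and the points $(\beta,-1)$ for $\beta<\omega_1$ already form an \emph{uncountable} set that is $<^Q$-cofinal below the top element $\emptyset$. A continuous order-preserving $f$ on a set of $<^Q$-cofinality $\omega_1$ carries $\omega_1$ bits of information and cannot be recovered from countably many values. This is exactly the difference between the weak partition property $\omega_1\to(\omega_1)^{\alpha}$ for $\alpha<\omega_1$ (where an $\omega$-round ordinal game suffices) and the strong partition property needed here. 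The missing idea, which is the engine of Martin's proof and of the proof in the paper, is to have each player code an \emph{entire} increasing function $\omega_1\to\omega_1$ by a \emph{single real}: a putative sharp code $\corner{\gcode{\tau},x^{*}}$, i.e.\ a term $\tau$ together with a remarkable EM blueprint $x^{*}$, so that the function is $\alpha\mapsto\tau^{\mathcal{M}_{x^{*},\alpha}}(\alpha)$. The rules then force Player II's putative sharp to stay wellfounded at least as long as Player I's, the homogeneous club is extracted as the club of strategy-admissibles via $\boldsigma{1}$-boundedness applied to the wellfounded parts of the putative EM models, and the value $[f]^Q$ fed to $A$ is reconstituted as the pointwise supremum of the two players' functions. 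None of this can be recovered from your round-by-round ordinal play.

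Secondary points, which would still need repair even if the main gap were fixed. (i) Your complexity estimate is off: the payoff is not $\boldDelta{2}$, because the substantive clause only fires when the players' blueprints are \emph{true} sharps; the paper organizes the payoff so that the games land in a difference hierarchy over $\boldpi{1}$ whose determinacy follows from $\boldDelta{2}$-determinacy, and this bookkeeping matters. (ii) The single game ``the player claiming $f\in A$ plays a witness'' is not well posed; the paper instead uses Theorem~\ref{thm:BK-KM} to write membership and non-membership in $A$ via two $\boldsigma{2}$ sets $B$ and $C$, runs two games $H^Q(B)$ and $H^Q(C)$, and shows II must win one of them by a boundedness argument against a pair of strategies for I. (iii) The club is obtained from $\boldsigma{1}$-boundedness (admissibles in the strategy), not from Martin's cone measure, though this is closer in spirit to what you wrote.
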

 \begin{proof}
  We imitate the proof in  \cite[Theorem 28.12]{kanamori}, which builds on partially iterable sharps.

For $x \in \mathbb{R}$, 
A putative $x$-sharp is a remarkable EM blueprint over $x$.  Suppose $x^{*}$ is a putative $x$-sharp. For any ordinal $\alpha$, $\mathcal{M}_{x^{*},\alpha}$ is the EM model built from $x^{*}$ and indiscernibles of order type $\alpha$. The wellfounded part of $\mathcal{M}_{x^{*}, \alpha}$ is transitive. For any limit ordinal $\alpha < \beta$, $\mathcal{M}_{x^{*}, \alpha}$ is a rank initial segment of $\mathcal{M}_{x^{*}, \beta}$. Say that $x^{*}$ is $\alpha$-wellfounded iff $\alpha \in \wfp(\mathcal{M}_{x^{*}, \alpha})$. A \emph{putative sharp code for an increasing function}  is $w = \corner{\gcode{\tau}, x^{*}}$ such that $x^{*}$ is a putative $x$-sharp, $\tau$ is a $\se{\underline{\in}, \underline{x}}$-unary Skolem term for an ordinal and
\begin{displaymath}
``\forall v,v'( (v , v'\in \ord \wedge v < v' )\to  (\tau(v) \in \ord  \wedge \tau(v) < \tau(v')))"
\end{displaymath}
is a true formula in $x^{*}$. The statement ``$\corner{\gcode{\tau}, x^{*}}$ is a putative sharp code for an increasing function, $x^{*}$ is $\alpha$-wellfounded, $r$ codes the order type of $\tau^{\mathcal{M}_{x^{*}, \alpha}}(\alpha)$''  about   $(\corner{\gcode{\tau}, x^{*}},r)$ is
$\boldsigma{1}$ in the code of $\alpha$. In addition, when $x^{*} = x^{\#}$, $\corner{\gcode{\tau}, x^{*}}$ is called a \emph{(true) sharp code for an increasing function}. 
The statement `` $M$ is a putative $v$-sharp for some  $v\in \mathbb{R}$, $\alpha\in \wfp(M_{\infty})$, $r$ codes the order type of $\tau^{M_{\infty}}(\alpha)$'' is a $\boldsigma{1}$ statement about $(M,r)$ in the code of $\alpha$.

To simplify matters, we shall ignore the level-1 component of $Q$ and  assume that $Q$ is a finite level-2 tree. 
Let $A \in \mathbb{L}_{\bolddelta{3}}[T_2]$. 
$A$ is $\boldDelta{3}$ by Theorem~\ref{thm:BK-KM}. Let $B,C \subseteq \mathbb{R}^2$ be $\boldsigma{2}$ such that
\begin{multline*}
  w \text{ codes } ({w}_q)_{q \in \dom(Q)} \in (\WO_{\omega})^{\dom(Q)} \wedge (\sharpcode{{w}_q})_{q \in \dom(Q)} \in A \\
\eqiv \exists z (w,z) \in B \eqiv \neg\exists z (w,z) \in C.
\end{multline*}
We define the game
\begin{displaymath}
H^{Q}(B)
\end{displaymath}
in which I produces $\corner{\gcode{\tau}, x^{*}}$ and II produces $(\corner{\gcode{\sigma}, y^{*}},w,z)$. 
An infinite run  ($\corner{\gcode{\tau}, x^{*}}, \corner{\gcode{\sigma}, y^{*}},w,z)$ is won by Player II iff
 \begin{enumerate}
\item  If $\corner{\gcode{\tau}, x^{*}}$ is a putative sharp code for an increasing function, then so is $\corner{\gcode{\sigma},y^{*}}$. Moreover, for any $\eta < \omega_1$, if
  \begin{displaymath}
    x^{*} \text{ is $\eta$-wellfounded }\wedge \tau^{\mathcal{M}_{x^{*},\eta}}(\eta) \in \wfp(\mathcal{M}_{x^{*},\eta})
  \end{displaymath}
then   
  \begin{displaymath}
    y^{*} \text{ is $\eta$-wellfounded }\wedge \sigma^{\mathcal{M}_{y^{*},\eta}}(\eta) \in \wfp(\mathcal{M}_{y^{*},\eta})  .
  \end{displaymath}
\item If $\corner{\gcode{\tau}, x^{*}} , \corner{\gcode{\sigma}, y^{*}} $ are true sharp codes for increasing functions, $x^{*} = x^{\#}$, $y^{*} = y^{\#}$, then $w$ codes $(w_{q})_{q \in \dom(Q)} \in (\WO_{\omega})^{\dom(Q)}$ and
 for $q \in \dom(Q)$,  letting $g^q$ be defined by
  \begin{displaymath}
    g^{q}(\vec{\alpha}) = \sup_{\beta}
    \left(
    \begin{array}{l}
\tau^{L[x]}(  \theta_Q((\vec{\alpha}\concat \beta) \oplus (q \concat (-1)))),\\
 \sigma^{L[y]}(\theta_Q((\vec{\alpha}\concat \beta) \oplus (q \concat (-1))))
\end{array}
    \right),
  \end{displaymath}
where the sup ranges over $\beta$ such that $ (\vec{\alpha}\concat \beta) \oplus (q \concat(-1))\in \rep(<^{Q})$, $\theta_Q$ is an order preserving bijection from $\rep(Q)$ to $\omega_1+1$, we have
\begin{displaymath}
 \forall q \in \dom(Q) [g^{q}]_{\mu^{Q_{\tree}(q)}}  =  \sharpcode{w_{q}}.
\end{displaymath}
and
\begin{displaymath}
  (w,z) \in B.
\end{displaymath}
 \end{enumerate}

\begin{mylemma}
  \label{lem:martin-spp-proof}
  If Player II has a winning strategy in $H^{Q}(B)$, then there is a club $X \subseteq \omega_1$ such that $X \in \mathbb{L}$ and $[X]^{Q\uparrow} \subseteq A$.
\end{mylemma}
\begin{proof}
  Let $\varphi$ be a winning strategy for Player II in $H^{Q}(B)$. We define a club $X\subseteq \omega_1$ by the  $\boldsigma{1}$-boundedness argument. For $\eta<\omega_1$, let $B_{\eta}$ be the set of $r\in \mathbb{R}$ such that there are $\corner{\gcode{\tau}, x^{*}}$, $\corner{\gcode{\sigma},y^{*}}$, $w$,$z$ and an ordinal $\beta \leq \eta$ such that
  \begin{enumerate}
  \item $\corner{\gcode{\tau}, x^{*}}$ is a putative sharp code for an increasing function, 
  \item $\corner{\gcode{\sigma}, y^{*}}$ is a putative sharp code for an increasing function, 
  \item $\corner{\gcode{\tau}, x^{*}}* \varphi = (\corner{\gcode{\sigma}, y^{*}}, w,z)$,
  \item $\beta \in \wfp(\mathcal{M}_{x^{*},\eta}) \wedge \tau^{M_{x^{*},\eta}}(\beta) \in \wfp(\mathcal{M}_{x^{*},\eta}) \wedge \tau^{M_{x^{*},\eta}}(\beta) \leq \eta$.
  \item $\beta \in \wfp(\mathcal{M}_{y^{*},\eta}) \wedge \sigma^{\mathcal{M}_{y^{*},\eta}}(\beta) $ has order type coded in $r$.
  \end{enumerate}
$B_{\eta}$ is a $\boldsigma{1}$ set in the code of $\eta$. Since $\varphi$ is winning for II, $B_\eta \subseteq WO$. By $\boldsigma{1}$-boundedness, if $X\subseteq \omega_1$ is the club consisting of $\varphi$-admissibles and limits of $\varphi$-admissibles, then for any $\xi \in X$, for any $\eta<\xi$ and $r \in B_{\eta}$, $\|r\|<\xi$. 

We have to show that $[X]^{Q\uparrow} \subseteq A$. That is, for any $f \in X^{Q\uparrow} \cap \mathbb{L}$, $[f]^{Q} \in A$. Pick such an $f$. Let ${x}\in \mathbb{R}$ and $\tau$ be  such that  for any $\vec{\alpha} \oplus q \in \rep({Q})$, 
\begin{displaymath}
  f(\vec{\alpha} \oplus q) = \tau^{L[{x}]} (\theta_Q(\vec{\alpha} \oplus q)).
\end{displaymath}
Feed in $\corner{\gcode\tau ,{x}^{\#}}$ for Player I in $H^{Q}(A;n)$. The response according to $\varphi$ is $ =  (\corner{\gcode\sigma , {y}^{\#}},w,z)*\varphi$. $\corner{\gcode\sigma , {y}^{\#}}$ is a true sharp code for an increasing function.  $w$ codes $(w_q)_{q \in \dom(Q)} \in (\WO_{\omega})^{\dom(Q)}$. 
Let $g^q$ be as in the definition of $H^{Q}(B)$. Thus, $[g^q]_{\mu^{Q_{\tree}(q)}} = \sharpcode{w_q}$ and $(w,z) \in B$. Thus, $(\sharpcode{w_q})_{q \in \dom(Q)} \in A$. 
To finish the proof, we have to see that
\begin{displaymath}
  [f_q]_{\mu^{Q_{\tree}(q)}} = [g^q]_{\mu^{Q_{\tree}(q)}}
\end{displaymath}
for all $q \in \dom(Q)$. 
It suffices to see that whenever $\vec{\alpha} $ respects $Q(q)$,
\begin{displaymath}
  \sup_{\beta} f ( (\vec{\alpha}\concat \beta)\oplus (q \concat (-1))) =  \sup_{\beta}  \left(
    \begin{array}{l}
f ( (\vec{\alpha}\concat \beta)\oplus (q \concat (-1))),    \\
   \sigma^{L[y]}  (\theta_Q ( (\vec{\alpha}\concat \beta)\oplus (q \concat (-1))  ))
 \end{array}
 \right)
\end{displaymath}
 $\leq$ is evident. To get $\geq$, by choice of $X$, for any $\beta$ which is used in the supremum, 
\begin{displaymath}
  \sigma^{L[y]}  (\theta_Q ( (\vec{\alpha}\concat \beta)\oplus (q \concat (-1))  )) < \min(X \setminus ({ f ( (\vec{\alpha}\concat \beta)\oplus (q \concat (-1)))   +1})). 
\end{displaymath}
The right hand side of the above inequality is  $\leq f ( (\vec{\alpha}\concat \beta+1)\oplus (q \concat (-1)))$, as $f$ is $<^{Q}$-order preserving into $X$. 
\end{proof}

Define the game $H^Q(C)$ in the same way. A symmetrical argument gives 
\begin{mylemma}\label{lem:martin-spp-another}
    If Player II has a winning strategy in $H^{Q}(C)$, then there is a club $X \subseteq \omega_1$ such that $X \in \mathbb{L}$ and $[X]^{Q\uparrow} \cap A = \emptyset$.
\end{mylemma}

The games $H^Q(B)$ and $H^Q(C)$ are both  $\boldgameclass{}$, hence determined. It remains to show that II must have a winning strategy in either $H^Q(B)$ or $H^Q(C)$. Suppose otherwise and I has a winning strategy $\varphi_B$ in $H^Q(B)$ and $\varphi_C$ in $H^Q(C)$. We apply the same boundedness argument as in the proof of Lemma~\ref{lem:martin-spp-proof}. Let $X$ be the set of countable $(\varphi_B,\varphi_C)$-admissibles and their limits. Let $f \in [X]^{Q \uparrow}$. If $[f]^Q \in A$, pick $(w,z) \in B$ with $w$ coding $(w_q)_{q \in \dom(Q)}$ and $(\sharpcode{w_q})_{q \in \dom(Q)} = [f]^Q$. Then II defeats $\varphi_B$ by playing $(\corner{ \sigma  , \varphi_B^{\#}}, w,z)$, where $\sigma^{L[\varphi_B]}(v) = $the $v$-th $\varphi_B$-admissible ordinal. If $[f]^Q \notin A$, II can defeat $\varphi_C$ by a symmetrical argument. This is a contradiction. 


 \end{proof}

\begin{mydefinition}\label{def:S-P-Q-measure}
Let $Q$ be a finite level $\leq 2$ tree.
  We define 
\begin{displaymath}
    A \in \mu^{Q}
  \end{displaymath}  
iff there is $C \in \mu_{\mathbb{L}}$ such that 
 \begin{displaymath}
    [C]^{Q\uparrow} \subseteq A.
  \end{displaymath}
 \end{mydefinition}
 $\mu^{Q}$ is easily verified to be a countably complete filter concentrating on  $[\omega_1]^{Q \uparrow}$. In particular, when $\card(Q) = 1$, $\mu^Q$ is the principal measure concentrating on $\se{( \omega_1)_{(2,\emptyset)}}$. 
Noticing the facts that $\rep(Q)$ has order type $\omega_1+1$, and that $[f]^Q$ depends only on $\set{f(v)}{ \wocode{v}_{<^Q} \text{ is a limit ordinal}}$. 
Theorem~\ref{thm:strong_partition_omega_1} implies that 
\begin{displaymath}
\mu^{Q}\text{ is an }\admistwobold\text{-measure.}
\end{displaymath}
Let $j^{Q} = j^{\mu^Q}_{\admistwobold}$ be the restricted ultrapower map of $\mu^{Q}$ on $\admistwobold$.  Put $[f]_{\mu^Q} = [f]^{\mu^Q}_{\admistwobold}$ for $f \in \admistwobold$. \Los{}' theorem reads: for any first order formula $\varphi$, for any $x\in \mathbb{R}$,  for any $f_i  \in \admistwobold$,
with $\ran(f_i) \subseteq \admistwo{x}$, $1\leq i \leq n$,
\begin{displaymath}
j^Q(\admistwo{x}) \models \varphi([f_1]_{\mu^{Q}},\ldots,[f_n]_{\mu^{Q}})
\end{displaymath}
iff
\begin{displaymath}
\text{for $\mu^{Q}$-a.e.\ $\vec{\xi}$, } \admistwo{x} \models \varphi(f_1(\vec{\xi}), \ldots,f_n(\vec{\xi})).
\end{displaymath}

If $Q$ is a subtree of $Q'$, both finite,  then 
$\mu^{Q'}$  projects to $\mu^{Q}$ via the map that sends
$
     (\mathbb{\beta}_{q})_{(d,q)\in \dom(Q') }
 $to 
$     (\comp{d}{\beta}_{q})_{(d,q)\in \dom(Q) }
$.
Let
\begin{displaymath}
j^{Q,Q'}:\ult(\admistwobold, \mu^{Q}) \to\ult(\admistwobold, \mu^{Q'})
\end{displaymath}
be the induced factor map.
If $\vec{Q} = (Q_n)_{n<\omega}$ is a level $\leq 2$ tower, the associated $\admistwobold$-measure tower $(\mu^{Q_n})_{n<\omega}$ is easily seen close to $\admistwobold$.

The homogeneity property of the Martin-Solovay tree on a $\boldpi{2}$ set (cf. \cite{kechris_homo_tree_cabal}) translates to our context:
\begin{mytheorem}
  \label{thm:MS-tree-level-leq-2}
Assume $\boldDelta{2}$-determinacy. Let  $\vec{Q} = (Q_n)_{n<\omega}$ be an infinite level-2 tower. Let $Q_{\omega} = \cup_{n<\omega} Q_n$. The following are equivalent.
\begin{enumerate}
\item $Q_{\omega}$ is $\Pi^1_2$-wellfounded.
\item $<^{Q_{\omega}}$ is a wellordering.
\item There is $\vec{\beta} = (\beta_t)_{t \in \dom(Q_{\omega})}$ which respects $Q_{\omega}$. 
\item  $(\mu^{Q_n})_{n<\omega}$ is $\admistwobold$-countably complete.
\item The direct limit of $(j^{Q_m,Q_{n}})_{m<n<\omega}$ is wellfounded.
\end{enumerate}
\end{mytheorem}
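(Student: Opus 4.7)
My approach chains the implications $(4) \Leftrightarrow (5)$, $(1) \Leftrightarrow (2)$, $(2) \Leftrightarrow (3)$, $(4) \Rightarrow (3)$, and $(3) \Rightarrow (5)$. The equivalence $(4) \Leftrightarrow (5)$ is immediate from Proposition~\ref{prop:countable-complete-to-wellfounded} applied to $(\mu^{Q_n})_{n<\omega}$, whose closeness to $\admistwobold$ is asserted just before the theorem. For $(2) \Leftrightarrow (3)$: if $<^{Q_\omega}$ is a wellordering, its order type is at most $\omega_1+1$ (since $\card(\rep(Q_\omega)) = \omega_1$ and each $\rep(Q_n)$ has order type $\omega_1+1$ by Proposition~\ref{prop:level_2_tower_wellfounded_equivalence}), and the rank function provides an $f \in \omega_1^{Q_\omega \uparrow}$ in $\mathbb{L}$ with $\vec{\beta} = [f]^{Q_\omega}$ respecting $Q_\omega$; the converse is immediate from the definition of $\omega_1^{Q_\omega \uparrow}$.

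For $(1) \Leftrightarrow (2)$, I lift Proposition~\ref{prop:level_2_tower_wellfounded_equivalence} to $Q_\omega$ via a dichotomy on a hypothetical $<^{Q_\omega}$-descending chain. By the Brouwer-Kleene structure, the chain either stabilizes among descendants of a fixed $q \in \dom(Q_\omega)$, producing an infinite descending sequence in $Q_\omega\se{q}$ and violating local $\Pi^1_1$-wellfoundedness; or escapes along an infinite branch $y \in [\dom(Q_\omega)]$, in which case its level-1 projections onto the slices $Q_\omega.\tree(y\res n)$ are wellordered, so Theorem~\ref{thm:martin_pi11_determinacy} forces $Q_\omega(y)$ to be $\Pi^1_1$-wellfounded, contradicting the branch clause in the definition of $\Pi^1_2$-wellfoundedness. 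The reverse direction extracts a descending chain from each failure mode.

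For $(4) \Rightarrow (3)$, I apply $\admistwobold$-countable completeness to $A_n = [\omega_1]^{Q_n \uparrow}$, each being $\mu^{Q_n}$-measure one and in $\admistwobold$ via Lemma~\ref{lem:respecting_level_2_tree_is_Delta13}; the resulting ordinal tuple respects each $Q_n$ and hence $Q_\omega$. For $(3) \Rightarrow (5)$, a standard homogeneous-tree evaluation using the respecting $\vec{\beta}$ defines compatible maps $\pi_n : \ult(\admistwobold, \mu^{Q_n}) \to \Ord$ by $\pi_n([g]_{\mu^{Q_n}}) = g(\vec{\beta}\res\dom(Q_n))$, which by \Los{} are order-preserving and commute with the factor maps $j^{Q_m, Q_n}$, embedding the direct limit into an ordinal-valued structure and thus witnessing its wellfoundedness. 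The main obstacle is the branch-case analysis in $(1) \Leftrightarrow (2)$: extracting from a BK-descending chain escaping along an infinite $y$ the wellordered witnesses required to apply Theorem~\ref{thm:martin_pi11_determinacy} uniformly to all the level-1 slices $Q_\omega.\tree(y\res n)$ will require careful accounting of how level-1 components interleave within the BK order.
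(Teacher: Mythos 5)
Your step $(3)\Rightarrow(5)$ is where the argument breaks, and it is load-bearing: granting all your other claims, the implication graph has edges $(4)\Leftrightarrow(5)$, $(1)\Leftrightarrow(2)\Leftrightarrow(3)$ and $(4)\Rightarrow(3)$, so without $(3)\Rightarrow(5)$ there is no implication from the cluster $\{1,2,3\}$ into $\{4,5\}$ and the five statements are not shown equivalent. The map $\pi_n([g]_{\mu^{Q_n}}) = g(\vec{\beta}\res\dom(Q_n))$ is not well defined: if $[g]_{\mu^{Q_n}} = [g']_{\mu^{Q_n}}$, then $g$ and $g'$ agree only on some $A \in \mu^{Q_n}\cap\admistwobold$, i.e.\ on some $[C]^{Q_n\uparrow}$ with $C\in\mu_{\mathbb{L}}$ a club, and a respecting tuple $\vec{\beta}\res\dom(Q_n)$ lies in $[\omega_1]^{Q_n\uparrow}$ but need not lie in $[C]^{Q_n\uparrow}$ (modify $g$ at the single point $\vec{\beta}\res\dom(Q_n)$ to get $g'$ with $[g]=[g']$ but a different value there). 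Evaluation at a point computes the ultrapower faithfully only when that point belongs to every measure-one set in $\admistwobold$, which is essentially the content of statement (4) itself; \Los{} gives you nothing here. This is exactly why the usual homogeneous-tree argument routes wellfoundedness of the direct limit through countable completeness of the tower rather than through a single branch point.

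The repair is the paper's implication $(2)\Rightarrow(4)$, which you should substitute for your $(3)\Rightarrow(5)$: given $A_n\in\mu^{Q_n}\cap\admistwobold$, choose clubs $C_n\in\mu_{\mathbb{L}}$ with $[C_n]^{Q_n\uparrow}\subseteq A_n$, intersect them into a single club $C\in L[x]$, and compose the order isomorphism of $(\rep(Q_{\omega}),<^{Q_{\omega}})$ with $\omega_1+1$ (available from (2)) with the increasing enumeration of $C$ to obtain one $f\in C^{Q_{\omega}\uparrow}$; then $[f\res\rep(Q_n)]^{Q_n}\in[C]^{Q_n\uparrow}\subseteq A_n$ simultaneously for all $n$, which is $\admistwobold$-countable completeness. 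With that edge in place the rest of what you wrote is essentially sound and close to the paper's route: $(1)\Leftrightarrow(2)$ is just a citation of Proposition~\ref{prop:level_2_tower_wellfounded_equivalence} (you need not re-derive it), $(4)\Leftrightarrow(5)$ is Proposition~\ref{prop:countable-complete-to-wellfounded}, and your $(4)\Rightarrow(3)$ agrees with the paper's one-line argument (both share the mild implicit step that a tuple respecting every $Q_n$ respects $Q_{\omega}$).
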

\begin{proof}
1 $\Leftrightarrow$ 2: By Proposition~\ref{prop:level_2_tower_wellfounded_equivalence}.

2 $\Rightarrow$ 4: 
  Suppose  $<^{Q_{\omega}}$ is a wellordering. Let $(A_n)_{n<\omega}$  be such that $A_n \in \mu^{Q_n}\cap \admistwobold$. Let $x\in \mathbb{R}$ and $C\in L[x]$ be a club in $\omega_1$ such that  $[C]^{Q_n\uparrow} \subseteq A_n$ for all $n$.   Let $f : \dom( <^{Q_\omega}) \to C$ be given by
  \begin{displaymath}
    f(\vec{\alpha} \oplus_{Q_{\omega}} t ) = \text{ the } \wocode{\vec{\alpha} \oplus_{Q_{\omega}} t}_{<^{Q_\omega}}  \text{-th element of } C.
  \end{displaymath}
Then $f \in L[x,Q_\omega]$ and is order preserving. Let $\beta_n = [f\res \rep({Q_n})]^{Q_n}$. Then for all $n$, $(\beta_1,\ldots,\beta_n) \in A_n$.

4 $\Rightarrow$ 3: This follows from the fact that $\mu^{Q_n}$ concentrates on tuples that respect $Q_n$.

3 $\Rightarrow$ 1: 
If $x \in [\dom(Q_\omega)]$, then  $j^{Q_\omega{(x \res k)}, Q_\omega{(x \res l)}}(\beta_{x \res k})>\beta_{x \res l}$ for all $k<l<\omega$. This means the direct limit of $j^{Q_\omega{(x \res k)}, Q_\omega{(x \res l)}}$  is illfounded. Hence $Q_{\omega}(x)$ is not $\Pi^1_1$-wellfounded by Theorem~\ref{thm:martin_pi11_determinacy}. 

4 $\Leftrightarrow$ 5: By Proposition~\ref{prop:countable-complete-to-wellfounded}.
\end{proof}

\begin{mydefinition}
  \label{def:typical_level_2_trees}
 $Q^0$, $Q^1$, $Q^{20}$, $Q^{21}$ denote the following typical level $\leq 2$ trees of cardinalities at most 2:
\begin{itemize}
\item $\comp{1}{Q}^{0} =\emptyset$, $ \comp{1}{Q}^1 = \se{ (0)}$, $\dom(\comp{2}{Q}^0) = \dom(\comp{2}{Q}^1) = \se{\emptyset}$.
\item For $d \in \se{0,1}$, $\comp{1}{Q}^{2d}=\emptyset$, $\dom(\comp{2}{Q}^{2d}) = \se{\emptyset, ((0))}$, $\comp{2}{Q}^{2d}((0))$ is of degree $d$. 
\end{itemize}
\end{mydefinition}

$\mu^{Q^0}$ is a principle measure.
$\mu^{Q^1}$ is essentially $\mu_{\mathbb{L}}$. $\mu^{Q^{20}}$ and $\mu^{Q^{21}}$ are essentially refinements of the $\admistwobold$-club filter on $u_2$, the former concentrates on ordinals of $\admistwobold$-cofinality $\omega$, the latter of $\admistwobold$-cofinality $\omega_1$.

\subsection{The tree $S_3$}
\label{sec:weak-partition-LT3-measures}


A \emph{partial level $\leq 2$ tree} is a pair $(Q,(d,q,P))$ such that $Q $ is a finite level $\leq 2$ tree, and one of the following holds:
\begin{enumerate}
\item  $(d,q,P)=(0,-1,\emptyset)$, or
\item $d=1$,  $q \notin \comp{1}{Q}$, $\comp{1}{Q}\cup \se{q}$ is a level-1 tree, $P = \emptyset$, or
\item $d=2$,  $q \notin \dom(\comp{2}{Q}) $, $ \dom(\comp{2}{Q})\cup \se{q}$ is tree of  level-1 trees, $P$ is the completion of $\comp{2}{Q}(q^{-})$. (In particular, $\comp{2}{Q}(q^{-})$ must have degree 1.)
\end{enumerate}
The \emph{degree} of $(Q,(d,q,P))$ is $d$. 
We put $\dom(Q,(d,q,P)) = \dom(Q) \cup \se{(d,q)}$. 
The \emph{cardinality} of $(Q,(d,q,P))$ is $\card(Q,(d,q,P))= \card(Q)+1$. 
The \emph{uniform cofinality} of a partial level $\leq 2$ tree $(Q, (d,q,P))$ is
\begin{displaymath}
  \ucf(Q, (d,q,P)),
\end{displaymath}
defined as follows. 
\begin{enumerate}
\item $\ucf(Q,(d,q,P)) = (0,-1)$ if $d=0$;
\item $\ucf(Q,(d,q,P)) = (1, q^{-})$ if $d= 1$, $\lh(q) > 1$;
\item $\ucf(Q,(d,q,P)) = (2, (\emptyset, \emptyset, (0)))$ if $d= 1$, $\lh(q) = 1$;
\item $\ucf(Q,(d,q,P)) = (2, (q', P, \vec{p}))$ if $d = 2$, $\comp{2}{Q}[q'] = (P,\vec{p})$,  and 
$q'$ is the $<_{BK}$-least element of $\comp{2}{Q}\se{q,+}$, $q' \neq q^{-}$;
\item $\ucf(Q,(d,q,P)) = (2, (q^{-}, P, \vec{p}))$ if $d = 2$, $\comp{2}{Q}[q^{-}] = (P^{-},\vec{p})$,  and  $\comp{2}{Q}\se{q,+} = \se{q^{-}}$. 
\end{enumerate}
So $\ucf(Q,(d,q,P))$ is either $(0,-1)$ or a regular extended $Q$-description. The \emph{cofinality} of $(Q,{(d,q,P)})$ is
\begin{displaymath}
  \cf(Q, {(d,q,P)}) =
  \begin{cases}
    0 & \text{if } d = 0, \\
    1 & \text{if } d=1 \text{ and }q = \min(\prec^{\comp{1}{Q}\cup \se{q}}), \\
    2 & \text{otherwise.}
  \end{cases}
\end{displaymath}

A tuple  $\vec{\beta}= (\comp{e}{\beta}_t)_{(e,t) \in \dom(Q,(d,q,P))}$ \emph{respects} $(Q,(d,q,P))$ iff $\vec{\beta}\res \dom(Q)$ respects $Q$ and $\comp{d}{\beta}_q<\omega$ if $d=0$, 
$\vec{\beta}$ respects a completion of $(Q,(d,q,P))$ otherwise.
A partial level $\leq 2$ tree of degree 0 has no completion. 
A \emph{completion} of a partial level $\leq 2$ tree $(Q,(d,q,P))$ of degree $\geq 1$ is a level $\leq 2$ tree $Q^*$ such that $\dom(Q^*) = \dom(Q,(d,q,P))$, $\comp{2}{Q}^* \res \dom(\comp{2}{Q}) = \comp{2}{Q}$, and either $d=1$ or $d=2\wedge \comp{2}{Q}_{\tree}(t) = P$. 
  For a level $\leq 2$ tree $Q'$, $(Q,(d,q,P))$ is a \emph{partial subtree} of $Q'$ iff a completion of $(Q,(d,q,P))$ is a subtree of $Q'$.

A \emph{partial level $\leq 2$ tower of discontinuous type} is a nonempty 
finite sequence $(Q_i,(d_i,q_i,P_i))_{1 \leq i \leq k}$ such that $\card(Q_1)=1 $, each $(Q_i,(d_i,q_i,P_i))$ is a partial level $\leq 2$ tree, and each $Q_{i+1}$ is a completion of $(Q_i,(d_i,q_i,P_i))$. 
A \emph{partial level $\leq 2$ tower of continuous type} is $(Q_i,(d_i,q_i,P_i))_{1 \leq i < k} \concat (Q_{*})$ such that either $k=0 \wedge Q_{*}$ is the level $\leq 2$ tree of cardinality 1 or $(Q_i,(d_i,q_i,P_i))_{1 \leq i < k}$ is a partial level $\leq 2$  tower of discontinuous type $\wedge Q_{*}$ is a completion of $(Q_{k-1}, (d_{k-1},q_{k-1},P_{k-1}))$. 
For notational convenience, the information of a partial level $\leq 2$ tower is compressed into a potential partial level $\leq 2$ tower. 
A \emph{potential partial level $\leq 2$ tower} is $(Q_{*},\overrightarrow{(d,q,P)}) = (Q_{*}, (d_i, q_i, P_i)_{1 \leq i \leq \lh(\vec{q})})$ such that for some level $\leq 2$ tower $\vec{Q} = (Q_i)_{1 \leq i \leq k}$, either $Q_{*} = Q_k$ $\wedge$ $(\vec{Q}, \overrightarrow{(d,q,P)})$ is a partial level $\leq 2$ tower of discontinuous type or $(\vec{Q}, \overrightarrow{(d,q,P)}) \concat (Q_{*})$ is a partial level $\leq 2$ tower of continuous type. 

\begin{mydefinition}\label{def:level_3_tree}
  A \emph{level-3 tree of uniform cofinality}, or \emph{level-3 tree}, is a function
  \begin{displaymath}
R
\end{displaymath}
such that $\emptyset \notin \dom(R)$, $\dom(R)\cup \se{\emptyset}$ is tree of level-1 trees and for any $r \in \dom(R)$, $(R (r \res l))_{ 1 \leq l \leq \lh(r)}$ is a partial  level $\leq 2$ tower of discontinuous type.
If $R(r) = (Q_r, (d_r,q_r, P_r))$, we denote  $R_{\tree}(r) = Q_r$, $R_{\node}(r) = (d_r,q_r)$, 
 $R[r] = (Q_r,  (d_{r \res l}, q_{r \res l}, P_{r \res l})_{1 \leq l \leq  \lh(r)})$. $R[r]$ is a potential partial level $\leq 2$ tower of discontinuous type. If $Q$ is a completion of $R(r)$, put $R[r,Q] = (Q, (d_{r \res l}, q_{r \res l}, P_{r \res l})_{1 \leq l \leq  \lh(r)})$, which is a potential partial level $\leq 2$ tower of continuous type. For $r \in \dom(R)\cup \se{\emptyset}$, put $R\se{r} = \set{a \in \omega^{<\omega}}{r \concat (a) \in \dom(R)}$, which is a level-1 tree. 

  The \emph{cardinality} of $R$ is $\card(R) = \card(\dom(R))$.  $R$ is said to be \emph{regular} iff $((1)) \notin \dom(R)$.  In other words, when $R \neq \emptyset$, $((0))$ is the $<_{BK}$-maximum of $\dom(R)$. 
\end{mydefinition}

Suppose  $R$ is a level-3 tree. Let $\bardom(R) = \dom(R)\cup \set{r \concat (-1)}{r \in \dom(R) }$. For $r \in \dom(R)$, put $R\se{r,- } = \se{r^{-} \concat (-1)} \cup \set{r^{-}\concat (a)}{R_{\tree}(r^{-}\concat (a)) = R_{\tree}(r), a <_{BK} r (\lh(r)-1)}$, 
 $R\se{r,- } = \se{r^{-}} \cup \set{r^{-}\concat (a)}{R_{\tree}(r^{-}\concat (a)) = R_{\tree}(r), a >_{BK} r (\lh(r)-1)}$, 

If $\vec{\beta} = (\comp{d}{\beta}_q)_{(d,q) \in N}$ is a tuple indexed by $N$, $r \in \bardom(R)$, $\lh(r) = k$, either $k=1$ or $\dom(R(r^{-})) \subseteq N$, we put
\begin{displaymath}
  \vec{\beta} \oplus_R r = (r(0), \comp{d}{\beta}_{q_1}, r(1), \dots, \comp{d}{\beta}_{q_{k-1}}, r(k-1)), 
\end{displaymath}
where $(d_i,q_i) = R_{\node}(r \res i)$. 
The \emph{ordinal representation} of $R$ is the set
\begin{align*}
  \rep(R) =& \set{\vec{\beta} \oplus_R r}{r \in \dom(R), \vec{\beta} \text{ respects } R_{\tree}(r)} \\
 & \cup \set{\vec{\beta} \oplus_R r \concat(-1)}{r \in \dom(R), \vec{\beta} \text{ respects }R(r)}. 
\end{align*}
$\rep(R)$ is endowed with the $<_{BK}$ ordering
\begin{displaymath}
  <^R = <_{BK} \res \rep(R).
\end{displaymath}
$R$ is \emph{$\Pi^1_3$-wellfounded} iff
\begin{enumerate}
\item $\forall r \in \dom(R) \cup \se{\emptyset}~ R\se{r}$ is $\Pi^1_1$-wellfounded, and
\item 
 $\forall z\in [\dom(R)] ~ R(z) \DEF \cup_{n<\omega}(R_{\tree}(z \res n))_{1 \leq n<\omega}$ is not $\Pi^1_2$-wellfounded.
 \end{enumerate}

For level-3 trees $R$ and $R'$, $R$ is a \emph{subtree} of $R'$ iff $R$ is a subfunction of $R'$. 
A \emph{finite level-3 tower} is a sequence $(R_i)_{i \leq n}$ such that $n<\omega$, each $R_i$ is a regular level-$2$ tree, $\card(R_i) = i+1$ and $i<j \to R_i$ is a subtree of $R_{j}$. $\vec{R}$ is \emph{regular} iff each $R_i$ is regular. 
An \emph{infinite level-3 tower} is a sequence $\vec{R}=(R_n)_{n<\omega}$ such that for each $n$, $(R_i)_{i \leq n}$ is a finite level-3 tower. 
 $\Pi^1_3$-wellfoundedness of a level-$3$ tower is a $\boldpi{3}$ property in the real coding the tower. In particular, every finite level-3 tree is $\Pi^1_3$-wellfounded. 
Similarly to Proposition~\ref{prop:level_2_tower_wellfounded_equivalence}, we have
\begin{myproposition}
  \label{prop:level_3_tower_wellfounded_wellorder_equivalent}
Assume $\boldDelta{2}$-determinacy. Suppose $R$ is a level-3 tree. Then $R$ is $\Pi^1_3$-wellfounded iff $<^R$ is a wellordering.
\end{myproposition}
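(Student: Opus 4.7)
The plan is to follow the template of Theorem~\ref{thm:MS-tree-level-leq-2} shifted one level up, with that theorem playing at level $2$ exactly the role that Theorem~\ref{thm:martin_pi11_determinacy} played at level $1$. Both directions are handled by analysis of the Brouwer-Kleene structure of $\rep(R)$.

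For the $(\Leftarrow)$ direction, suppose $R$ is not $\Pi^1_3$-wellfounded. Either some $R\se{r}$ fails to be $\Pi^1_1$-wellfounded, or some $z \in [\dom(R)]$ has $R(z)$ being $\Pi^1_2$-wellfounded. In the first case, fix an infinite $<_{BK}$-descending sequence $a_0 >_{BK} a_1 >_{BK} \cdots$ in $R\se{r}$. Each $R(r\concat (a_n))$ is a finite partial level $\leq 2$ tree whose completion is trivially $\Pi^1_2$-wellfounded, hence by Theorem~\ref{thm:MS-tree-level-leq-2} admits a respecting tuple. Extending a common respecting tuple for $R_{\tree}(r)$, we obtain $v_n = \vec{\gamma}^n \oplus_R r\concat (a_n)\concat (-1) \in \rep(R)$ whose entries agree on the positions coming from $r$ and strictly descend at the next tree-slot, so $v_0 >^R v_1 >^R \cdots$. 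In the second case, Theorem~\ref{thm:MS-tree-level-leq-2} supplies a tuple $\vec{\beta}$ respecting the level-2 tower $R(z)$, and setting $v_n = \vec{\beta} \oplus_R (z \res n)$ yields a strictly lengthening chain in $\rep(R)$, whence $v_0 >^R v_1 >^R \cdots$.

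For the $(\Rightarrow)$ direction, assume toward contradiction that $(v_n)_n$ is an infinite $<^R$-descending sequence in $\rep(R)$. A standard Brouwer-Kleene pigeonhole passes to a subsequence in which either (i) there is an index $k$ such that consecutive terms agree on all positions below $k$ and have strict descent at $k$, or (ii) the common-prefix length grows without bound. The interleaving in $\vec{\beta} \oplus_R r$ places tree-node coordinates at even positions and ordinal coordinates at odd positions. Since ordinals cannot admit infinite descent, case (i) forces $k$ to be a tree slot, yielding an infinite $<_{BK}$-descending sequence inside some $R\se{r}$ (with $r$ read off the common prefix) and contradicting clause (1) of $\Pi^1_3$-wellfoundedness. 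In case (ii), the stable tree coordinates determine an element $z \in [\dom(R)]$ and the stable ordinal coordinates form a tuple that respects each finite truncation $R_{\tree}(z \res n)$, hence respects the infinite level-2 tower $R(z)$; by Theorem~\ref{thm:MS-tree-level-leq-2} this forces $R(z)$ to be $\Pi^1_2$-wellfounded, contradicting clause (2).

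The main delicate point will be the Brouwer-Kleene dichotomy in the $(\Rightarrow)$ direction and, in case (ii), the verification that the limit ordinal data genuinely assembles into a tuple respecting the full level-2 tower $R(z)$ rather than just its finite approximations. This is underwritten by Lemma~\ref{lem:Q_respecting}, which characterizes respectability of a tuple for a finite level $\leq 2$ tree purely through local node-by-node conditions, so that respectability for each initial segment of $R(z)$ patches up to respectability for $R(z)$ itself.
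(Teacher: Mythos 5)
Your overall strategy is the right one and is exactly what the paper intends: derive the proposition from Theorem~\ref{thm:MS-tree-level-leq-2} by the same Brouwer--Kleene analysis that derives Proposition~\ref{prop:level_2_tower_wellfounded_equivalence} from Theorem~\ref{thm:martin_pi11_determinacy}. Your $(\Rightarrow)$ direction and your Case 2 of $(\Leftarrow)$ carry this out correctly. But Case 1 of your $(\Leftarrow)$ direction has a genuine gap. Write $k = \lh(r)$ and $(d_k,q_k) = R_{\node}(r)$. Then $\vec{\gamma}^n \oplus_R (r\concat(a_n)) = (r(0), \gamma^n_{q_1}, \dots, r(k-1), \gamma^n_{q_k}, a_n)$: there is an ordinal coordinate $\gamma^n_{q_k}$ sitting strictly \emph{between} ``the positions coming from $r$'' and the new tree slot holding $a_n$, and your descent claim silently assumes this coordinate is the same for all $n$. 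It need not be. Each $R_{\tree}(r\concat(a_n))$ is a completion of the partial level $\leq 2$ tree $R(r)$, and when $R(r)$ has degree $2$ the completion is not unique (the node $\comp{2}{Q^{*}}_{\node}(q_k)$ of a completion $Q^{*}$ is unconstrained). By Lemma~\ref{lem:unique_level_2_tree_represent}, no tuple respects two distinct completions on the same domain, so whenever $R_{\tree}(r\concat(a_n)) \neq R_{\tree}(r\concat(a_{n+1}))$ you are forced to take $\gamma^{n+1}_{q_k} \neq \gamma^n_{q_k}$; if it ever increases, the comparison at that coordinate is decided the wrong way and $v_{n+1} <^R v_n$ fails, while forcing it to strictly decrease at infinitely many $n$ is impossible. (At level $2$ this problem is invisible, because completions of partial level $\leq 1$ trees are unique, so all $Q_{\tree}(q\concat(a))$ coincide; the phenomenon is genuinely new at level $3$, and it is precisely the kind of thing that ``similarly to Proposition~\ref{prop:level_2_tower_wellfounded_equivalence}'' glosses over.)

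The gap is repairable, but it needs an extra idea you did not supply: a partial level $\leq 2$ tree has only \emph{finitely} many completions, since the new node must be $-1$ or one of the finitely many one-point regular extensions of the finite level-1 tree $P$ appearing in $R(r)$. So replace your $<_{BK}$-descending sequence by an infinite branch $b$ of $R\se{r}$ and apply the pigeonhole principle to find a single completion $Q^{*}$ with $Q^{*} = R_{\tree}(r\concat(b\res m))$ for infinitely many $m$; along that subsequence one fixed tuple respecting $Q^{*}$ works, the prefix including $\gamma_{q_k}$ is now literally constant, and the $a$-coordinates still descend because they are proper lengthenings of one another. A smaller point on your $(\Rightarrow)$ direction, case (ii): Lemma~\ref{lem:Q_respecting} is stated for finite trees, so ``respects every finite truncation'' does not immediately produce a single witness in $\omega_1^{R(z)\uparrow}$ for the infinite tree. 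It is cleaner to note that the limit tuple weakly respects $R(z)$ and that the order conditions force each $R(z)\se{q}$ to be $\Pi^1_1$-wellfounded, which is all that the proof of $3 \Rightarrow 1$ in Theorem~\ref{thm:MS-tree-level-leq-2} actually uses to conclude $\Pi^1_2$-wellfoundedness.
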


Associated to a $\boldpi{3}$ set $A$ we can assign a level-3 system  $(R_s)_{s \in \omega^{<\omega}}$ so that 
$x\in A$ iff the infinite level-3 tree $ R_x \DEF  \cup_{n<\omega}R_{x \res n}$ is $\Pi^1_3$-wellfounded. If $A$ is lightface $\Pi^1_3$, then $(R_s)_{s \in \omega^{<\omega}}$   can be picked effective.

Suppose  $F \in \admistwobold$ is a function on $\rep(R)$, $r \in \dom(R) $. Then $F_r$ is a function on $\omega_1^{R_{\tree}(r) \uparrow}$ that  sends $\vec{\beta}$ to $F(\vec{\beta} \oplus_R r)$.   $F$ represents a $\card(R)$-tuple of ordinals
\begin{displaymath}
  [F]^R = ([F]^R_r)_{r \in \dom(R)}
\end{displaymath}
where 
$[F]^R_r = [F_r]_{\mu^{R_{\tree}(r)}}$  for $r \in \dom(R)$. 
If $B \subseteq \bolddelta{3}$, put
\begin{displaymath}
  F \in B^{R\uparrow}
\end{displaymath}
iff $F \in \admistwobold$ and $F$ is an order-preserving continuous function from $\rep(R)$ to $B$ (with respect to $<^R$ and $<$). Let
\begin{displaymath}
  [B]^{R \uparrow} = \set{[F]^R}{ F \in B^{R \uparrow}}. 
\end{displaymath}
A tuple of ordinals $\vec{\gamma} = (\gamma_r)_{r \in \dom(R)}$ is said to \emph{respect} $R$ iff $\vec{\gamma} \in [\bolddelta{3}]^{R \uparrow}$. $\vec{\gamma}$ is said to \emph{weakly respect} $R$ iff for any $t,t'\in \dom(R)$, if   $t$ is a proper initial segment of $t'$, then $j^{R_{\tree}(t),R_{\tree}(t')}(\gamma_t) > \gamma_{t'}$.
By virtue of the order $<^R$,  if $\vec{\gamma}$ respects $R$, then $\vec{\gamma}$ weakly respects $R$ and whenever $R_{\tree}(t \concat (p)) = R_{\tree}(t \concat (q))$ and $p<q$, then $\gamma_{t \concat (p)}<\gamma_{t\concat (q)}$.

 The trees $S_3^{-}$ and $S_3$ are defined in \cite{kechris_homo_tree_cabal}.  They both project to the universal $\Pi^1_3$ set. In our language, they take the following form.
\begin{mydefinition}
  \label{def:S_3}
  Assume $\boldDelta{2}$-determinacy.
  \begin{enumerate}
  \item $S_3^{-}$ is the tree on $V_{\omega} \times \bolddelta{3}$ such that $(\emptyset,\emptyset) \in S_3^{-}$ and  
    \begin{displaymath}
      (\vec{R}, \vec{\alpha} )  = ( (R_i)_{ i\leq n}, (\alpha_i)_{i \leq n}  ) \in S_3^{-}
    \end{displaymath}
    iff $\vec{R}$ is a finite regular level-3 tower and letting $r_i \in \dom(R_{i+1}) \setminus \dom(R_i)$, $\beta_{r_i} = \alpha_{i+1}$, then  $(\beta_r)_{r \in \dom(R_n)}$  respects $R_n$.  
  \item $S_3$ is the tree on $V_{\omega} \times \bolddelta{3}$ such that $(\emptyset,\emptyset) \in S_3$ and 
    \begin{displaymath}
      (\vec{R}, \vec{\alpha} )  = ( (R_i)_{ i\leq n}, (\alpha_i)_{i \leq n}  ) \in S_3
    \end{displaymath}
    iff $\vec{R}$ is a finite regular level-3 tower and letting $r_i \in \dom(R_{i+1}) \setminus \dom(R_i)$, $\beta_{r_i} = \alpha_{i+1}$, then  $(\beta_r)_{r \in \dom(R_n)}$  weakly respects $R_n$.    
\end{enumerate}
\end{mydefinition}
 By Theorem~\ref{thm:MS-tree-level-leq-2},
 \begin{displaymath}
   p[S_3^{-}]  = p[S_3]  =  \set{\vec{R}}{\vec{R} \text{ is a $\Pi^1_3$-wellfounded level-3 tower} }.
 \end{displaymath}
The (non-regular) scale associated to $S_3$ is $\Pi^1_3$. For $\xi < \bolddelta{3}$, put $(\vec{R},\vec{\alpha}) \in S_3\res \xi$ iff $(\vec{R}, \vec{\alpha}) \in S_3$ and  $(\vec{R},\vec{\alpha}) \neq (\emptyset,\emptyset) \to \alpha_0 < \xi$.

Suppose $E$ is a club in $\omega_1$. For a partial level $\leq 2$ tree $(Q,(d,q,P))$, put $\vec{\alpha} = (\comp{e}{\alpha}_t)_{(e,t) \in \dom(Q, (d,q,P)) } \in [E]^{(Q, (d,q,P)) \uparrow}$ iff $\vec{\alpha}$ respects $(Q,(d,q,P))$, $(\comp{e}{\alpha}_t)_{(e,t) \in \dom(Q)} \in [E]^{Q \uparrow}$,  and $d=1 \to \comp{1}{\alpha}_q \in E$, $d=2 \to \comp{2}{\alpha}_q \in j^P(E)$.
For a level-3 tree $R$, put 
 \begin{align*}
   \rep(R) \res E= &  \set{ \vec{\beta} \oplus_R r}{ r \in \dom(R), \vec{\beta} \in [E]^{R_{\tree}(r) \uparrow}} \\
   & \cup \set{ \vec{\beta} \oplus_R r \concat (-1)}{r \in \dom(R),  \vec{\beta} \in [E]^{R(r)\uparrow}}.
 \end{align*}
By Lemma~\ref{lem:Q_respecting},
 $\rep(R) \res E$ is a closed subset of $\rep(R)$ (in the order topology of $<^R$). A useful consequence is that the order preserving bijection
 \begin{displaymath}
   \theta_R^E : \rep(R) \res E \to \rep(R)
 \end{displaymath}
is the identity on $\rep(R) \res E'$ for a club $E' \subseteq E$.

\section{The lightface level-3 sharp}
\label{sec:level-3-sharp}

This section defines a real $0^{3\#}$ which is many-one equivalent to $M_2^{\#}$, under boldface $\boldpi{3}$-determinacy. The assumption of $\boldpi{3}$-determinacy is very likely not optimal.  

\subsection{Level-3 boundedness}
\label{sec:level-2-admissible}

Recall in Corollary~\ref{coro:Kunen_Martin_Sigma13} that the rank of a $\Sigma^1_3(\lessthanshort{u_{\omega}},x)$ wellfounded relation is bounded by $\kappa_3^x$. 
We would like to strengthen this fact by allowing a suitable code for an arbitrary ordinal in $\bolddelta{3}$. 
The strengthening is based on an inner model theoretic characterization of $u_{\omega}$ in $L[T_3,x]$. 
%
%
We say that 
\begin{displaymath}
  \delta \text{ is an $L$-Woodin cardinal}
\end{displaymath}
iff  $L(V_{\delta})\models \delta$ is Woodin.

\begin{mytheorem}[Woodin, {\cite[Theorem 5.22]{sarg_pwo_2013}}]
  \label{thm:u_omega_L_woodin}
Assume $\boldpi{3}$-determinacy. Let $\kappa = u_{\omega}$.  For $x \in \mathbb{R}$, $M_{2, \infty}^{-}(x) \models \kappa$ is the least $L$-Woodin cardinal.
\end{mytheorem}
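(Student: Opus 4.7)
The plan is to first invoke Theorem~\ref{thm:steel} to rewrite $M_{2,\infty}^{-}(x)$ as the concrete inner model $N = L_{\bolddelta{3}}[T_3,x]$, so that the assertion becomes the internal statement in $N$ that $u_\omega$ is the least $\delta$ with $L(V_\delta)^N \models \text{``}\delta$ is Woodin''. I split this into (a) $u_\omega$ is $L$-Woodin in $N$, and (b) no $\delta < u_\omega$ is.

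For (a), I would exploit the system of $\admistwobold$-measures $\mu^Q$ on tuples below $u_\omega$, indexed by finite level $\leq 2$ trees $Q$ and produced via the strong partition property at $\omega_1$ (Theorem~\ref{thm:strong_partition_omega_1}), together with the $\admistwobold$-countable completeness of their towers given by Theorem~\ref{thm:MS-tree-level-leq-2}. These measures are uniformly definable from $T_2$, and by the Martin-Solovay construction they are internally reconstructible from $T_3$; hence the induced extenders on $V_{u_\omega}^N$ live in $L(V_{u_\omega})^N$. Given $f : u_\omega \to u_\omega$ in that inner model, one picks $n$ so that $f \res u_n$ determines the next $f$-closure point past $u_n$, then chooses $Q$ of sufficient complexity that $j^Q$ yields an extender with critical point $u_n$ and strength above that closure point; \Los{}'s theorem together with wellfoundedness of the measure towers assemble these extenders into a Woodinness witness for $u_\omega$ inside $L(V_{u_\omega})^N$.

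For (b), suppose toward contradiction that some $\delta < u_\omega$ is $L$-Woodin in $N$; then $\delta \leq u_n$ for some $n$, so it suffices to refute this at each $u_n$. The key is that extenders available inside $L(V_{u_n})^N$ come only from the $\mathbb{L}$-club filter on $\omega_1$ and its finite iterates via the factor maps $j^\sigma$ of Section~\ref{sec:level-1-tree}; by Lemma~\ref{lem:j_sigma_continuity}, the discontinuity structure of these maps is controlled by only finitely many distinct $\mathbb{L}$-cofinalities below $u_n$, which is too sparse to support a Woodin extender sequence. A diagonal $f : u_n \to u_n$ definable from these data then admits no reflecting extender in $L(V_{u_n})^N$. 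The main obstacle is step (a): verifying that the extenders pulled back from the $\mu^Q$ genuinely live in $L(V_{u_\omega})^N$ with the coherence Woodinness requires --- this is essentially lifting the Kechris-Martin-Woodin analysis of $u_\omega$ from $\admistwobold$ into the $L$-style model $L(V_{u_\omega})^N$ through the Moschovakis tree $T_3$.
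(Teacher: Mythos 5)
First, a point of orientation: the paper does not prove this statement at all. It is Woodin's theorem, imported as a black box via the citation to \cite[Theorem 5.22]{sarg_pwo_2013}, and it is used downstream (in Corollary~\ref{coro:Sigma13-boundedness}) precisely as an external input. So there is no internal argument to compare yours against, and any proof you offer has to stand on its own as a proof of a genuinely deep fact about the direct limit model.

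Measured against that standard, your sketch has gaps at exactly the places where the theorem lives. For your part (a), the objects you propose to use --- the measures $\mu^Q$ of Definition~\ref{def:S-P-Q-measure} --- are only $\admistwobold$-measures: by construction they decide membership only for subsets of $[\omega_1]^{Q\uparrow}$ that lie in $\mathbb{L}_{\bolddelta{3}}[T_2]$. Witnessing Woodinness of $u_\omega$ inside $L(V_{u_\omega})^{N}$ for $N = L_{\bolddelta{3}}[T_3,x]$ requires, for each $A \in \power(u_\omega)\cap L(V_{u_\omega})^{N}$, extenders that are total on the subsets of the $u_n$ belonging to $N$, satisfy the reflection condition $j(A)\cap\lambda = A\cap\lambda$, and are themselves elements of $L(V_{u_\omega})^{N}$. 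You give no argument that $\power(u_\omega)\cap N$ is contained in $\admistwobold$ (so that the $\mu^Q$ even measure the relevant sets), no argument that the derived extenders (as opposed to the measures on finite tuples) land in $L(V_{u_\omega})^{N}$, and no argument that a tree $Q$ can be chosen so that $j^Q$ has critical point exactly $u_n$ with the required strength and $A$-reflection. You flag this cluster as ``the main obstacle,'' which is honest, but it is not a loose end to be discharged later: it \emph{is} the theorem. For part (b), the assertion that every extender in $L(V_{u_n})^{N}$ arises from the $\mathbb{L}$-club filter and the maps $j^{\sigma}$ is a Kunen-style classification that is nowhere established in the paper --- the effective Kunen analysis of Section~\ref{sec:kunens-analys-subs} concerns subsets of $u_\omega$ in $\admistwobold$, not the extenders of the $T_3$-model, and Lemma~\ref{lem:j_sigma_continuity} controls continuity points of the $j^{\sigma}$, not Woodinness. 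The decomposition into (a) and (b) is the right shape, but neither half is carried out; the appropriate course here is either to cite the theorem, as the paper does, or to reproduce Woodin's argument from the cited source.
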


\begin{mycorollary}[Level-3 boundedness]
  \label{coro:Sigma13-boundedness}
Assume $\boldpi{3}$-determinacy. Suppose $x \in \mathbb{R}$,  $\mathcal{N} \in \mathcal{F}_{2,x}$, $\eta$ is a cardinal and strong cutpoint of $\mathcal{N}$, $\xi = \pi_{\mathcal{N},\infty}(\eta)$. 
Suppose  $g$ is $\coll(\omega,\eta)$-generic over $\mathcal{N}$, $r \in \mathbb{R}\cap \mathcal{N}[g]$.
Let $\lambda$ be the least $L$-Woodin cardinal in $M_{2,\infty}^{-}(x)$ above $\xi$.
Suppose $G$ is a $\Pi^1_3(r, \lessthanshort{u_{\omega}})$ set equipped with a regular $\Pi^1_3(r , \lessthanshort{u_{\omega}})$ norm $\varphi$. Suppose $A$ is a $\Sigma^1_3(r, \lessthanshort{u_{\omega}})$ subset of $G$. Then 
\begin{displaymath}
   \sup\set{ \varphi(y)}{y \in A} < (\lambda^{+})^{M_{2,\infty}(x)}.
\end{displaymath}
\end{mycorollary}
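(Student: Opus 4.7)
My plan is to cast $\sup\{\varphi(y) : y \in A\}$ as the rank of a $\Sigma^1_3(r,\lessthanshort{u_\omega})$ wellfounded relation and apply a Kunen--Martin analysis with trees localized inside $M_{2,\infty}(x)$. First, fix $\alpha < u_\omega$ such that $G$ and $\varphi$ are $\Pi^1_3(r,\alpha)$ and $A$ is $\Sigma^1_3(r,\alpha)$, and let $w \in \WO_{\omega}$ code $\alpha$. Define
$$y \prec z \iff y \in A \wedge \varphi(y) < \varphi(z).$$
Then $\prec$ is a $\Sigma^1_3(r,\alpha)$ wellfounded relation, and by regularity of $\varphi$ its rank majorizes $\sup\{\varphi(y) : y \in A\}$, so it suffices to bound $\rank(\prec) < (\lambda^{+})^{M_{2,\infty}(x)}$.

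Next, write $\prec = p[U]$ with $U$ a tree built from the Moschovakis tree $T_3$ together with the parameters $r, w$. The central step is to exhibit $U$ as a tree on $\omega^2 \times \lambda$ belonging to $M_{2,\infty}(x)$. This uses the hypothesis that $r \in \mathcal{N}[g]$ for a $\coll(\omega,\eta)$-generic $g$ over the iterate $\mathcal{N}$, with $\eta$ a cardinal and strong cutpoint of $\mathcal{N}$. The iteration map $\pi_{\mathcal{N},\infty}$ sends $\eta$ to $\xi$, and by Theorem~\ref{thm:u_omega_L_woodin} together with Theorem~\ref{thm:steel} (giving $M_{2,\infty}^-(x) = L_{\bolddelta{3}}[T_3,x]$ and identifying $u_\omega$ as the least $L$-Woodin), $\lambda$ plays the role of $u_\omega$ above $\xi$: it is an $L$-Woodin of $M_{2,\infty}^-(x)$. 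Inside $L(V_\lambda)^{M_{2,\infty}^-(x)}$, the $L$-Woodinness of $\lambda$ absorbs the $\coll(\omega,\xi)$-generic realizing $r$ (lifted from $g$ via $\pi_{\mathcal{N},\infty}$), so that the initial segment $T_3 \res \lambda$ together with this generic suffices to compute $\prec$, producing $U$ as a tree on $\omega^2 \times \lambda$.

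Finally, Kunen--Martin (Theorem~\ref{thm:Kunen-Martin}) applied inside $M_{2,\infty}(x)$ bounds $\rank(p[U])$ by the first admissible over $U$; since $U$ has cardinality $\lambda$ in $M_{2,\infty}(x)$, this admissible is strictly below $(\lambda^{+})^{M_{2,\infty}(x)}$. The principal obstacle is the middle paragraph: $\lambda$ is not a genuine Woodin of $M_{2,\infty}^-(x)$ but only an $L$-Woodin, so the genericity absorption must be carried out inside $L(V_\lambda)^{M_{2,\infty}^-(x)}$, and one must verify that the global $\Pi^1_3(r,\alpha)$-definitions of $G$ and $\varphi$ localize correctly to $T_3 \res \lambda$ once $r$ and $w$ are witnessed by the generic. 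This step is the technical heart, and is the natural generalization to parameters below $u_\omega$ of the bound $\kappa_3^x$ furnished by Corollary~\ref{coro:Kunen_Martin_Sigma13}.
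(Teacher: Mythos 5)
Your reduction to Kunen--Martin is the right instinct, and your first and last paragraphs are fine in outline, but the middle step --- producing a tree $U \in M_{2,\infty}(x)$ on $\omega^2\times\lambda$ with $p[U]=\prec$ --- is exactly where the proof has to happen, and the mechanism you propose for it does not work. You want to ``lift $g$ via $\pi_{\mathcal{N},\infty}$'' to a $\coll(\omega,\xi)$-generic over $M_{2,\infty}(x)$ and let the $L$-Woodinness of $\lambda$ ``absorb'' it. But $g$ is generic only over $\mathcal{N}$ and over models that agree with $\mathcal{N}$ on subsets of $\eta$; the full direct limit $M_{2,\infty}(x)$ is formed using iterates with critical points \emph{below} $\eta$, so it contains dense subsets of $\coll(\omega,\xi)$ that are not images under $\pi_{\mathcal{N},\infty}$ of dense subsets of $\coll(\omega,\eta)$ lying in $\mathcal{N}$, and a filter on $\coll(\omega,\eta)$ induces no filter, let alone a generic one, on $\coll(\omega,\xi)=\pi_{\mathcal{N},\infty}(\coll(\omega,\eta))$. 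Mere $L$-Woodinness of $\lambda$ gives no genericity-absorption device for a real that is not actually generic over the model. Two smaller points: a $\Sigma^1_3(r,\lessthanshort{u_{\omega}})$ relation is the projection of a tree built from $T_2$ and a code $w\in\WO_{\omega}$, living on $\omega^2\times u_{\omega}$, not from $T_3$; and your $\prec$ should be defined on $A\times A$ via the negation of the $\Pi^1_3$ relation $\leq^{*}_{\varphi}$ so that it is genuinely $\Sigma^1_3(r,\lessthanshort{u_{\omega}})$.

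The paper's proof sidesteps the localization problem entirely. Let $\mathcal{G}_{2}^{\mathcal{N},\eta}$ be the subsystem of $\mathcal{F}_{2,x}$ consisting of nondropping iterates of $\mathcal{N}$ above $\eta$, with direct limit $M_{2,\infty}^{\mathcal{N},\eta,\#}$ and factor embedding $\pi^{\mathcal{N},\eta}_{x}\colon M_{2,\infty}^{\mathcal{N},\eta,\#}\to M_{2,\infty}^{\#}(x)$. Because $\eta$ is a cardinal and strong cutpoint, these iterations add no new subsets of $\eta$, so $g$ \emph{is} generic over $M_{2,\infty}^{\mathcal{N},\eta,\#}$ and $M_{2,\infty}^{\mathcal{N},\eta,\#}[g]=M_{2,\infty}^{\#}(r_g)$, where $r_g$ codes $(g,\mathcal{N}|\eta)$. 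Corollary~\ref{coro:Kunen_Martin_Sigma13} then gives $\sup\set{\varphi(y)}{y\in A}<\kappa_3^{r_g}$, which is below the successor of $u_{\omega}$ in $M_{2,\infty}^{\#}(r_g)$; Theorem~\ref{thm:u_omega_L_woodin} identifies $u_{\omega}$ as the least $L$-Woodin of $M_{2,\infty}^{\mathcal{N},\eta,\#}$ above $\eta$, so by elementarity $\pi^{\mathcal{N},\eta}_{x}(u_{\omega})=\lambda$, and order-preservation of $\pi^{\mathcal{N},\eta}_{x}$ pushes the bound below $(\lambda^{+})^{M_{2,\infty}(x)}$. If you want to keep a tree-based picture, the restricted direct limit is where the tree lives: the Kunen--Martin tree for $\prec$ sits in $M_{2,\infty}^{\#}(r_g)=M_{2,\infty}^{\mathcal{N},\eta,\#}[g]$, not in $M_{2,\infty}(x)$, and the bound is transferred to $M_{2,\infty}(x)$ by the embedding rather than by building the tree there.
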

\begin{proof}
Put $x=0$ for simplicity. Put
\begin{displaymath}
  \mathcal{G}_{2}^{\mathcal{N},\eta} = \{ \mathcal{P} \in \mathcal{F}_{2} : \mathcal{P} \text{ is a nondropping iterate of } \mathcal{N} \text{ above } \eta \}.
\end{displaymath}
$\mathcal{G}_{2}^{\mathcal{N},\eta} $ is a subsystem of $\mathcal{F}_{2}$. Let $M_{2,\infty}^{\mathcal{N},\eta,\#}$ be the direct limit of $\mathcal{G}_{2}^{\mathcal{N},\eta}$. The inclusion map of  direct systems induces an embedding between direct limits
\begin{displaymath}
  \pi^{\mathcal{N},\eta}_{x} : M_{2,\infty}^{\mathcal{N},\eta,\#} \to M_{2,\infty}^{\#}.
\end{displaymath}
Let $r_g \in \mathbb{R}$ be the real coding $(g, \mathcal{N}|\eta)$. 
Every mouse $\mathcal{P} \in \mathcal{G}_{2}^{\mathcal{N},\eta}$ corresponds to an $r_g$-mouse $\mathcal{P}[g] \in \mathcal{F}_{2,r_g}$ (converted into an $r_g$-mouse in the obvious way, cf. \cite{schindler-steel-self-iterability}). So in the direct limit,
\begin{displaymath}
  M_{2,\infty}^{\mathcal{N},\eta,\#} [g] = M_{2,\infty}^{\#}(r_g).
\end{displaymath}
By Corollary~\ref{coro:Kunen_Martin_Sigma13}, 
\begin{displaymath}
   \sup\set{ \varphi(y)}{y \in A} < \kappa_3^{r_g},
\end{displaymath}
which in turn is smaller than the successor of $u_{\omega}$ in $M_{2,\infty}^{\#}(r_g)$, as $\se{T_2,r_g} \in M_{2,\infty}^{\#}(r_g)$. By Theorem~\ref{thm:u_omega_L_woodin}, $u_{\omega}$ is the least $L$-Woodin cardinal of $M_{2,\infty}^{\#}(r_g)$, hence the least $L$-Woodin cardinal of 
$M_{2,\infty}^{\mathcal{N},\eta,\#}$ above $\eta$. By elementarity, 
$\pi_x^{\mathcal{N},\eta} (u_{\omega}) = \lambda$. So $\pi_x^{\mathcal{N},\eta}(\kappa_3^{r_g}) < (\lambda^{+})^{M_{2,\infty}}$. This finishes the proof. 
\end{proof}

\subsection{Representations of ordinals in \texorpdfstring{$\bolddelta{3}$}{}}
\label{sec:Pi13_norm_vs_wellordering_on_u_omega}

We introduce a coding system for ordinals in $\bolddelta{3}$ which is the higher level analog of $\WO$. The coding system is guided by Theorem~\ref{thm:Delta13_coding}. Identifying $u_{\omega}$ with $(V_{\omega} \cup u_{\omega})^{<\omega}$, we shall assume $X$ is a $\Delta^1_3$ subset of $\mathbb{R} \times (V_{\omega} \cup u_{\omega})^{<\omega}$ so that the map $v \mapsto X_v$ is a surjection  from $\mathbb{R}$ onto $\power((V_{\omega}\cup u_{\omega})^{<\omega})$.

For a finite level-3 tree $R$ and a tuple $\vec{\beta} \oplus_R t \in \rep(R)$, put
\begin{displaymath}
v\in \LO^R_{\vec{\beta} \oplus_R t}
\end{displaymath}
iff  for each $\vec{\gamma} \oplus_R s  \leq^R  \vec{\beta} \oplus_R t$,
\begin{displaymath}
  (X_v)_{\vec{\gamma} \oplus_R s} \DEF \set{(\xi,\eta)}{(v,\vec{\gamma}\oplus_R s, \xi,\eta) \in X_v}
\end{displaymath}
 is a linear ordering on $u_{\omega}$. 
 Put 
 \begin{displaymath}
   v \in \LO^R
 \end{displaymath}
iff $v \in \LO^R_{\vec{\beta}\oplus_R t}$ for all $\vec{\beta}\oplus_R t\in \rep(R)$. 
The relations ``$ v\in \LO^R_{\vec{\beta} \oplus_R t}$'' and ``$v\in \LO^R$'' are $\Delta^1_3$.
Put
 \begin{displaymath}
   v \in \WO^{R\uparrow}_{\vec{\beta} \oplus_R t}
 \end{displaymath}
iff  for each  $\vec{\gamma} \oplus_R s  \leq ^R \vec{\beta} \oplus_R t$, $(X_v)_{\vec{\gamma} \oplus_R s}$ is a wellordering on $u_{\omega}$, and the map $\vec{\gamma} \oplus _Rs \mapsto  \ot((X_v)_{\vec{\gamma}\oplus_R s})$ is continuous, order preserving for  $\vec{\gamma} \oplus_R s \leq ^R \vec{\beta} \oplus_R t$. Put  
 \begin{displaymath}
   v \in \WO^{R\uparrow}
 \end{displaymath}
iff $v \in \WO^{R\uparrow}_{\vec{\beta} \oplus_R t}$ for all $\vec{\beta} \oplus_R t\in \rep(R)$. 
The relations
``$v\in \WO^{R\uparrow}_{\vec{\beta} \oplus_R t}$"
and 
``$R \text{ is a finite level-3 tree } \wedge v\in \WO^{R\uparrow}$''
are $\Pi^1_3$. If $(X_v)_{\vec{\beta} \oplus_R t}$ is a wellordering on $u_{\omega}$, its order type is denoted by $\wocode{v}_{\vec{\beta} \oplus_R t}$. A member $v \in \WO^{R\uparrow}$ codes a tuple of ordinals $[v]^R$ that respects $R$:
\begin{displaymath}
  [v]^R = 
  [\vec{\beta} \oplus_R t \mapsto \wocode{v}_{\vec{\beta}\oplus_R t}]^R.
\end{displaymath}

Clearly, if  $v \in \WO^{R \uparrow}$, then $[v]^R\in \admistwo{v,R}$ and  is $\Delta_1$-definable in $\admistwo{v,R}$ from $\se{T_2,v,R}$. Put $[v]^R = ([v]^R_t)_{t \in \dom(R)}$. So $[v]^R_t = [\vec{\beta} \mapsto \wocode{v}^R_{\vec{\beta}\oplus_R t}]_{\mu^{R_{\tree}(t)}}$.

Observe the simple fact that for any finite level-1 tree $W$, for any $\vec{\alpha}=(\alpha_w)_{w \in W} $ respecting $W$, there is a $\Pi^1_1$-wellfounded level-1 tree $W'$ extending $W$ such that  $\alpha_w=\wocode{(w)}_{<^{W'}}$ for any $w \in W$. 
Intuitively, $W'$ ``represents'' $\vec{\alpha}$ in the sense that $\vec{\alpha}$ extends to a tuple $\vec{\alpha}'$ respecting $W'$ and if $\vec{\beta}$ respects $W'$, then $\forall w \in W~ \alpha_w \leq \beta_w$.
 It is implicitly used in proving that $0^{\#}$ is the unique wellfounded remarkable EM blueprint. 
Likewise, its higher level analog will be an ingredient in the level-3 EM blueprint formulation of $0^{3\#}$.

We also need to code ordinals in $\bolddelta{3}$ by direct limits of iterations of $\Pi^1_3$-iterable mice. Suppose $x \in \mathbb{R}$ and $z$ codes a $\Pi^1_3$-iterable $x$-mouse $\mathcal{P}_z$. Then
\begin{displaymath}
  \pi_{\mathcal{P}_z, \infty} : \mathcal{P}_z \to (\mathcal{P}_z)_{\infty}
\end{displaymath}
is the direct limit map of all the nondropping iterates of $\mathcal{P}_z$. $o((\mathcal{P}_z)_{\infty})$ is the length of a $\Delta^1_3(z)$-prewellordering, namely the one induced by iterations of $\mathcal{P}_z$. By Corollary~\ref{coro:Delta13_pwo_computable_in_LT2}, $\pi_{\mathcal{P}_z,\infty}$ and $(\mathcal{P}_z)_{\infty}$ are both in $\admistwo{M_1^{\#}(z)}$ and $\Delta_1$-definable over $\admistwo{M_1^{\#}(z)}$ from $\se{T_2, M_1^{\#}(z)}$. 

\subsection{Putative level-3 indiscernibles}
\label{sec:level_3_indiscernibles}

The higher level analog of the type of $L$ with $n$ indiscernibles is the type of $M_{2,\infty}^{-}$ realized by an appropriate $[F]^R$, where $F \in (\bolddelta{3})^{R \uparrow} $. Such functions $F$ are coded by subsets of $u_{\omega}$ in $\admistwobold$. The coding system is provided by Theorem~\ref{thm:Delta13_coding}.

 $\mathcal{L}=\se{\underline{\in}}$ is the language of set theory. 
For a level-3 tree $R$, $\mathcal{L}^R$ is the expansion of $\mathcal{L}$ which   consists of  additional
 constant symbols $\underline{c_r}$ for each $r \in \dom(R) $.
For a level-3 tree $R$ and a tuple of ordinals $\vec{\gamma} = (\gamma_r)_{r\in \dom(R)}$, the $\mathcal{L}$-structure $M_{2,\infty}^{-}$ expands to the $\mathcal{L}^R$-structure
\begin{displaymath}
  (M_{2,\infty}^{-} ; \vec{\gamma})
\end{displaymath}
 whose constant $\underline{c_r}$ is interpreted as $\gamma_r$.

 \begin{mydefinition}
   \label{def:firm}
 $C \subseteq \bolddelta{3}$ is said to be \emph{firm} iff every member of $C$ is additively closed, the set  $\set{\xi}{\xi = \ot(C \cap \xi)}$ has order type $\bolddelta{3}$  and $C \cap \xi \in \admistwobold$ for all $\xi < \bolddelta{3}$.
 \end{mydefinition}

\begin{mydefinition}
  \label{def:level-3_indiscernibles}
 $C\subseteq \bolddelta{3}$ is called a set of \emph{potential level-3 indiscernibles for $M_{2,\infty}^{-}$} iff  for any level-3 tree $R$, for any $F ,G \in C^{R \uparrow}\cap \admistwobold$, 
 \begin{displaymath}
   (M_{2,\infty}^{-}; [F]^R) \equiv    (M_{2,\infty}^{-}; [G]^R).
 \end{displaymath}
\end{mydefinition}
A firm set of potential level-3 indiscernibles for $M_{2,\infty}^{-}$ is the higher level analog of a set of order indiscernibles for $L$. Note that 
the successor elements of $C$ don't really play a part in computing $[F]^R = ([F_r]_{\mu^{R_{\tree}(r)}})_{r \in \dom(R)}$, as the relevant ultrapowers $\mu^{R_{\tree}(r)}$ concentrate on tuples of limit ordinals,  hence the prefix ``potential''. 

\begin{mylemma}
  \label{lem:the-measure-W3R}
Assume $\boldpi{3}$-determinacy. Then there is a firm set of potential level-3 indiscernibles for $M_{2,\infty}^{-}$.
\end{mylemma}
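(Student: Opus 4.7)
The plan is to establish a level-3 analog of the strong partition property on $\bolddelta{3}$ and then to extract potential indiscernibles by countable diagonalization. The target partition property states that for each finite level-3 tree $R$ and each $A \in \admistwobold$, there is a firm $C \subseteq \bolddelta{3}$ such that either $[C]^{R\uparrow} \subseteq A$ or $[C]^{R\uparrow} \cap A = \emptyset$, where $[C]^{R\uparrow} = \set{[F]^R}{F \in C^{R\uparrow}}$. The proof mimics the Martin pattern of Theorem~\ref{thm:strong_partition_omega_1} one level up: rewrite $A$ in $\boldDelta{3}$ form using Theorem~\ref{thm:BK-KM} and the $\WO^{R\uparrow}$-coding of Section~\ref{sec:Pi13_norm_vs_wellordering_on_u_omega}; set up a game in which the two players produce codes for functions $F \in \bolddelta{3}^{R\uparrow}$ together with a side witness of whether $[F]^R \in A$, using $\Pi^1_3$-iterable mice (i.e., $M_2^{\#}$-codes of increasing functions on $\rep(R)$) in place of the sharp codes used at level~1; invoke $\boldpi{3}$-determinacy. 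A winning strategy $\varphi$ for Player~II yields, through a boundedness argument based on Corollary~\ref{coro:Sigma13-boundedness}, a firm $C \subseteq \bolddelta{3}$ consisting of appropriate $\varphi$-admissible closure points below $\bolddelta{3}$; the computation of Lemma~\ref{lem:martin-spp-proof} then shows $[C]^{R\uparrow} \subseteq A$, and the symmetric game with the complement gives the other alternative.

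Granting this partition property, enumerate all pairs $(R_n, \varphi_n)_{n<\omega}$ with $R_n$ a finite level-3 tree and $\varphi_n$ an $\mathcal{L}^{R_n}$-formula. For each $n$, the set
\begin{displaymath}
  A_n = \set{ [F]^{R_n} }{ F \in \bolddelta{3}^{R_n\uparrow},\ (M_{2,\infty}^{-}; [F]^{R_n}) \models \varphi_n }
\end{displaymath}
lies in $\admistwobold$, since $M_{2,\infty}^{-} = L_{\bolddelta{3}}[T_3]$ by Theorem~\ref{thm:steel} and satisfaction in $L_{\xi}[T_3]$ is $\Delta_1$ over $\admistwobold$ uniformly in $\xi < \bolddelta{3}$. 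Apply the partition property to obtain a firm $C_n$ homogeneous for $A_n$. Diagonalize through $(C_n)_{n<\omega}$ to produce a single firm $C$ such that for every $n$ and every $F \in C^{R_n\uparrow}$ we have $[F]^{R_n}$ on the homogeneous side of $A_n$; this works because each initial segment of $C$ is determined by finitely many of the $C_n \cap \xi$, each of which lies in $\admistwobold$. The resulting $C$ is firm and is a set of potential level-3 indiscernibles for $M_{2,\infty}^{-}$.

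The main obstacle is the partition property itself, in particular the extraction of a firm (and not merely unbounded) homogeneous set from the winning strategy. Martin's level-1 argument coded functions into $\omega_1$ by sharp EM-blueprints and used ordinary $\boldsigma{1}$-boundedness, but at level 3 one must code functions into $\bolddelta{3}$ by $\Pi^1_3$-iterable mice and apply Corollary~\ref{coro:Sigma13-boundedness}, whose hypothesis demands the parameters lie below $u_\omega$. Threading the $M_2^{\#}$ direct-limit machinery through the game while maintaining that every initial segment of the homogeneous set lies in $\admistwobold$ is the technically subtle point.
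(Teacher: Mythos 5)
Your overall architecture---first prove a level-3 strong partition property with colorings in $\admistwobold$, then diagonalize over all pairs $(R_n,\varphi_n)$---does not match the paper and, more importantly, breaks at the very first application. The set $A_n = \set{[F]^{R_n}}{(M_{2,\infty}^{-};[F]^{R_n})\models\varphi_n}$ is an unbounded subset of $(\bolddelta{3})^{\card(R_n)}$, so it cannot be an element of $\admistwobold=\mathbb{L}_{\bolddelta{3}}[T_2]$, a structure of ordinal height $\bolddelta{3}$ whose members are (essentially) bounded subsets of $\bolddelta{3}$. Your justification also conflates $T_2$ with $T_3$: Theorem~\ref{thm:steel} gives $M_{2,\infty}^{-}=L_{\bolddelta{3}}[T_{3}]$, and $T_3$ is a tree on $\omega\times\bolddelta{3}$ that is not even a member of $\admistwobold$; satisfaction in $L_{\xi}[T_3]$ is nowhere near $\Delta_1$ over $\admistwobold$. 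Indeed the theory of $(M_{2,\infty}^{-};\vec\gamma)$ is exactly the $0^{3\#}$-level object the lemma is building (Lemma~\ref{lem:3sharp_from_M2sharp} places it at $\game(\llbracket\emptyset\rrbracket_R+\omega)\textnormal{-}\Pi^1_3$, far above the $\Pi^1_3$ world that $\admistwobold$ captures via Theorem~\ref{thm:BK-KM}). So either your partition property is too weak to color the sets you need, or, if you strengthen it to admit colorings of the required complexity, proving it is at least as hard as the lemma itself---and in fact belongs to the generalized Jackson analysis deferred to the third paper of this series.

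The paper never formulates a partition property at $\bolddelta{3}$. Instead, for each fixed $(R,\varphi)$ it plays a single game $G^{R;\varphi}$ in which the two players directly compete: each presents a $\WO^{R\uparrow}$-code for a tuple respecting $R$ together with a $\Pi^1_3$-iterable premouse $\mathcal{P}_x$ (closed under the $M_1^{\#}$-operator, with a cofinal sequence of cardinal cutpoints) and ordinals $\vec\alpha\in\mathcal{P}_x$ whose direct-limit images realize the tuple, with Player I asserting $\varphi$ and Player II asserting $\neg\varphi$ of the resulting structure. The mechanism that adjudicates the game---absent from your sketch---is Rule 4: the players must alternately dominate each other in the Dodd--Jensen order $<_{DJ}$ on $\Pi^1_3$-iterable initial segments, and a successful comparison shows both cannot comply forever, so the payoff is a well-defined $(\llbracket\emptyset\rrbracket_R+\omega)\textnormal{-}\Pi^1_3$ condition, determined under $\boldpi{3}$-determinacy. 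From a winning strategy $\sigma$ for (say) II, the homogeneous set is taken to be the $L$-Woodin cardinal cutpoints of $M_{2,\infty}^{-}(\sigma)$ and their limits, and the boundedness argument via Corollary~\ref{coro:Sigma13-boundedness} (applied to the $\Pi^1_3(\lessthanshort{u_\omega})$ norms on $\WO^{R\uparrow}_w$ and on $<_{DJ}$) shows every $F$ into this set yields $(M_{2,\infty}^{-};[F]^R)\models\neg\varphi$. You correctly anticipated the role of mice in place of sharp codes and the use of Corollary~\ref{coro:Sigma13-boundedness}, but the reduction to an $\admistwobold$-coloring partition property is not a viable route, and the comparison argument that makes the game meaningful is a missing idea, not a routine adaptation of Theorem~\ref{thm:strong_partition_omega_1}.
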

\begin{proof}  
Suppose $R$ is a finite level-3 tree. 
Let $\varphi$ be an $\mathcal{L}^R$-sentence. Consider the  game $G^{R;\varphi}$ where I produces reals $v,x,c$ and a natural number $p$, II produces reals $v',x',c'$ and a natural number $p'$. The payoff is decided according to the following priority list:
  \begin{enumerate}
  \item  I and II must take turns to ensure that $v \in \WO^{R \uparrow}$ and $v' \in \WO^{R \uparrow}$. If one of them fails to do so, and $w\in \rep(R)$ is $<^R$-least for which $v \notin \WO^{R \uparrow}_w \vee v' \notin \WO^{R \uparrow}_w$, then I loses iff $v \notin \WO^{R \uparrow}_w$, and II loses iff $v \in \WO^{R \uparrow}_w$.
  \item If 1 is satisfied, put $\vec{\gamma} = (\gamma_r)_{r \in \dom(R)}$, where $\gamma_r = \max([v]^R_r, [v']^R_r)$.
I must ensure
    \begin{enumerate}
    \item $x$ codes a 2-small premouse $\mathcal{P}_x$ which satisfies ``I am closed under the $M_1^{\#}$-operator'';
    \item $c$ codes a strictly increasing, cofinal-in-$o(\mathcal{P}_x)$ sequence of ordinals $(c_n)_{n < \omega}$ relative to $x$ such that each $c_n$ is a cardinal cutpoint of $\mathcal{P}_x$;
     \item  $\mathcal{P}_x | c_1$ is a $\Pi^1_3$-iterable mouse; 
    \item $p$ codes a tuple of ordinals $\vec{\alpha} = (\alpha_r)_{r \in \dom(R)}$ in $\mathcal{P}_x|c_0$ relative to $x$;
    \item For each $r \in \dom(R)$, $\pi_{\mathcal{P}_x|c_0, \infty}(\alpha_r) =\gamma_r$;
    \item $(\mathcal{P}_x; \vec{\alpha}) \models \varphi$.
    \end{enumerate}
 Otherwise he loses.
  \item If 1-2 are satisfied, II must ensure 2(a)-(f) with $(x,c,(c_n)_{n<\omega}, p,\vec{\alpha},\varphi)$ replaced by $(x',c', (c'_n)_{n<\omega}, p',\vec{\alpha'}, \neg \varphi)$,
 otherwise he loses.
  \item If 1-3 are satisfied, I and II must take turns to ensure for all $2 \leq n <\omega$,
    \begin{enumerate}
    \item $\mathcal{P}_x|c_n$ is a $\Pi^1_3$-iterable mouse and $\mathcal{P}_{x'}| c'_{n-1} <_{DJ} \mathcal{P}_x| c_n$;
    \item $\mathcal{P}_{x'}|c_n'$ is a $\Pi^1_3$-iterable mouse and $\mathcal{P}_{x}| c_{n} <_{DJ} \mathcal{P}_{x'}| c_n'$.
    \end{enumerate}
 If one of them fails to do so, and $n$ is least for which (a) or (b) fails at $n$,  then I loses iff  (a) fails at $n$, and II loses iff (a) holds at $n$.
  \item It is impossible that both players obey all the rules, due to a successful comparison between $\mathcal{P}_x$ and $\mathcal{P}_{x'}$. The definition of $G^{R;\varphi}$ is finished. 
  \end{enumerate}
 The payoff of $G^{R;\varphi}$ has complexity $(\llbracket \emptyset \rrbracket_R+\omega) \textnormal{-}\Pi^1_3$ for both players. The nontrivial part about the complexity is that 2(e) is $\Delta^1_3$, shown as follows. According to rules 2(a)-(c),  $\mathcal{P}_x|c_1$ is $\Pi^1_3$-iterable and closed under the (genuine) $M_1^{\#}$-operator, $c_0<c_1$,  and therefore 
$M_1^{\#}(\mathcal{P}_x | c_0)$ is canonically coded in $x$. 
 $\pi_{\mathcal{P}_x|c_0, \infty}(\alpha_s)$ is the length of a $\Delta^1_3(\mathcal{P}_x | c_0)$ prewellordering, induced by iterations. By Corollary~\ref{coro:Delta13_pwo_computable_in_LT2},  $\pi_{\mathcal{P}_x|c_0, \infty}(\alpha_s)$ is $\Delta_1$-definable over $\admistwo{x}$ from $\se{T_2,x}$. $\vec{\gamma}$ is clearly $\Delta_1$-definable over $\admistwo{v}$ from $\se{T_2,v}$.  So 2(e) is expressed into a $\Delta_1$ statement over $L_{\kappa_3^{v,x}}[T_2,v,x]$ from $\se{T_2, v,x,c}$, or equivalently,  $\Delta^1_3(v,x,c)$ by Theorem~\ref{thm:BK-KM}.

 Hence $G^{R;\varphi}$ is determined. Suppose for definiteness II has a winning strategy $\sigma$ in $G^{R;\varphi}$. Let $C$ be the set of $L$-Woodin cardinal cutpoints of $M_{2,\infty}^{-}(\sigma)$ and their limits. We show that
 \begin{displaymath}
   \forall  F \in C^{R \uparrow} ~ (M_{2,\infty}^{-}; [F]^R) \models \neg \varphi
 \end{displaymath}
Suppose towards a contradiction that $F \in C^{R \uparrow}$ but $(M_{2,\infty}^{-}; [F]^R) \models  \varphi$. As $\bolddelta{3}$ is inaccessible in $M_{2, \infty}^{\#}$, there is a club $D \in M_{2,\infty}^{\#}$ in $\bolddelta{3}$ so that $M_{2,\infty}^{-} | \lambda \elem M_{2,\infty}^{-}$ for any $\lambda \in D$. There is thus a continuous, order preserving $G : \omega+1 \to C \setminus \sup \ran( F)$ for which $(M_{2,\infty}^{-}| G(\omega); [F]^R) \models \varphi$. Pick $\mathcal{P} \in \mathcal{F}_{2}$ and ordinals $(c_n)_{n<\omega}$, $(\alpha_r)_{r \in \dom(R)}$ in $\mathcal{P}$ such that $\pi_{\mathcal{P},\infty}(c_n) = G(n)$ for any $n<\omega$ and $\pi_{\mathcal{P},\infty} (\alpha_r) = [F]^R_r$ for any $r  \in \dom(R)$. 
Thus, $(\mathcal{P} | \sup_{n<\omega} c_n; \vec{\alpha} )\models \varphi$. 
Let Player I play $(v,x,c,p)$, where $v \in \WO^{R \uparrow}$, $\wocode{v}^R_w = F(w)$ for any $w \in \rep(R)$, $x$ codes $\mathcal{P}|\sup_{n<\omega}c_n$, $c$ codes $(c_n)_{n<\omega}$ relative to $x$, $p$ codes $(\alpha_r)_{r \in \dom(R)}$. The response according to $\sigma$ is denoted by $(v',x',c',p') = (v,x,c,p)* \sigma$. We shall derive a contraction by showing neither player breaks the rules, using $\boldsigma{3}$-boundedness.

As $\sigma$ is  a winning strategy, Player II is not the first person to break the rules. So
$v \in \WO^{R \uparrow}$ implies $v' \in \WO^{R \uparrow}$. 
For each $w \in \rep(R)$ which is  either the $<^R$-minimum or a $<^R$-successor,  
  if $\mathcal{N} \in \mathcal{F}_{2,\sigma}$, $\eta \in \mathcal{N}$,  $\pi_{\mathcal{N},\infty}(\eta) = F(w)$,  $g$ is $\coll(\omega, \eta)$-generic over $\mathcal{N}$, $r_g\in \mathbb{R}$ being the real coding $(g, \mathcal{N}|\eta)$, then $(v',x',c',p')$ belongs to the set
 \begin{displaymath}
   A_w  =  \set{(\bar{v} ,\bar{x},\bar{c},\bar{p})*\sigma}{\bar{v} \in \WO^{R\uparrow}_w \res \xi}
 \end{displaymath}
which is $\Sigma^1_3(M_1^{\#}(r_g), \lessthanshort{u_{\omega}})$ by Corollary~\ref{coro:Delta13_pwo_computable_in_LT2} and Theorem~\ref{thm:BK-KM}. 
 Since $\sigma$ is a winning strategy, $A_w$  is a subset of 
\begin{displaymath}
B_w =      \set{(\bar{v}',\bar{x}',\bar{c}',\bar{p}')}{\bar{v}' \in \WO^{R\uparrow}_w}
\end{displaymath}
$B_w$ is a $\Pi^1_3(\lessthanshort{u_{\omega}})$ set, equipped with the $\Pi^1_3(\lessthanshort{u_{\omega}})$ prewellordering $(\bar{v}',\bar{x}',\bar{c}',\bar{p}') \mapsto \wocode{\bar{v}'}^R_w$.
By Corollary~\ref{coro:Sigma13-boundedness},  $\wocode{{v}'}^R_w < \min(C \setminus (F(w)+1))$. By continuity, if $w$ has $<^R$-limit order type, then $\wocode{{v}'}^R_w  \leq \wocode{{v}}^R_w $.
Consequently, for $r \in \dom(R)$, $[v']^R_r \leq   [v]^R_r$, so if $\vec{\gamma}$ is defined from $v,v'$ as in Rule 2, then $\gamma_r = [v]^R_r$.

By our choice of $F$ and $G$, Rule 2 is satisfied. 
  Let $\mathcal{P}_x, (c_n)_{n<\omega}, \vec{\alpha}, \mathcal{P}_{x'},(c'_n)_{n<\omega}, \vec{\alpha}'$ be defined as in Rules 2 and 3.
For each $1 \leq n<\omega$, using the $\Pi^1_3$-prewellordering on codes of $\Pi^1_3$-iterable mice, a similar boundedness argument shows that $\wocode{\mathcal{P}_{x'} | c_n'}_{<_{DJ}} <  \min (C \setminus (G(n)+1))$, and hence 
$\mathcal{P}_{x'} | c_{n}' <_{DJ} \mathcal{P}_x | c_{n+1}$. So Rule 4 is satisfied. This is impossible. 
\end{proof}

\begin{mydefinition} \label{def:x_3_sharp}
Assume $\boldpi{3}$-determinacy.  Let $C$ be a firm set of potential level-3 indiscernibles for $M^{-}_{2,\infty}$. Then
\begin{displaymath}
0^{3\#}
\end{displaymath}
is a map sending a finite level-3 tree $R$ to the complete consistent $\mathcal{L}^R$-theory $0^{3\#}(R)$, where $\gcode{\varphi} \in 0^{3\#}(R)$ iff $\varphi$ is an $\mathcal{L}^R$-formula and for all $\vec{\gamma} \in [C]^{R\uparrow}$, 
  \begin{displaymath}
    (M_{2,\infty}^{-}; \vec{\gamma}) \models \varphi.
  \end{displaymath}
\end{mydefinition}

$0^{3\#}$ is the higher level analog of $0^{\#}$.
Each individual $0^{3\#}(R)$ is the higher level analog of the $n$-type that is realized in $L$ by $n$ indiscernibles.  
%

The proof of Lemma~\ref{lem:the-measure-W3R} shows  

\begin{mylemma}
  \label{lem:3sharp_from_M2sharp}
 Assume $\boldpi{3}$-determinacy. For a finite level-3 tree $R$, $0^{3\#}(R)$ is a  $\game(\llbracket \emptyset \rrbracket_R+\omega)\textnormal{-}\Pi^1_3)$ real.
\end{mylemma}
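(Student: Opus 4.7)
The plan is to read off the complexity of $0^{3\#}(R)$ directly from the game $G^{R;\varphi}$ constructed in the proof of Lemma~\ref{lem:the-measure-W3R}, since that game already has payoff complexity $(\llbracket\emptyset\rrbracket_R+\omega)\textnormal{-}\Pi^1_3$ uniformly in $\varphi$. First I would upgrade the ``for definiteness'' argument in Lemma~\ref{lem:the-measure-W3R} to a symmetric statement: if Player II has a winning strategy $\sigma$ in $G^{R;\varphi}$, then the set $C_\sigma$ of $L$-Woodin cardinal cutpoints of $M^{-}_{2,\infty}(\sigma)$ and their limits is firm and is a set of potential level-3 indiscernibles for $M^{-}_{2,\infty}$ satisfying $(M^{-}_{2,\infty};[F]^R)\models\neg\varphi$ for every $F\in C_\sigma^{R\uparrow}$; and the exactly analogous statement, with $\varphi$ in place of $\neg\varphi$, holds when Player I wins with some strategy $\tau$. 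This is essentially what the proof of Lemma~\ref{lem:the-measure-W3R} verifies: the ``II wins'' case is written out in detail, and the ``I wins'' case is identical after swapping the roles of $(v,x,c,p)$ and $(v',x',c',p')$ throughout the rules of the game.

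Next I would use the determinacy of $G^{R;\varphi}$, which holds under $\boldpi{3}$-determinacy via Martin's transfer of determinacy to finite (and countable) difference extensions of $\boldpi{3}$, to conclude the game-theoretic characterization
\begin{displaymath}
\gcode{\varphi}\in 0^{3\#}(R)\iff \text{Player I wins }G^{R;\varphi},
\end{displaymath}
independently of the particular firm set $C$ fixed in Definition~\ref{def:x_3_sharp}. By Definition~\ref{def:level-3_indiscernibles} the $\mathcal{L}^R$-theory $\Theta(C)\DEF\set{\gcode{\varphi}}{(M^{-}_{2,\infty};[F]^R)\models\varphi\text{ for all }F\in C^{R\uparrow}\cap\admistwobold}$ is complete and consistent. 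By the refined statement above, exactly one of ``some firm $C'$ realizes $\varphi$'' and ``some firm $C'$ realizes $\neg\varphi$'' holds, corresponding respectively to I winning and II winning $G^{R;\varphi}$. Combining this with the fact that each $\Theta(C)$ is already complete forces $\Theta(C)$ to be independent of $C$, and $\gcode{\varphi}\in\Theta(C)$ iff I wins $G^{R;\varphi}$.

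Finally I would observe that the complexity analysis in Lemma~\ref{lem:the-measure-W3R} is uniform in $\varphi$: the only $\varphi$-dependent rules are the test $(\mathcal{P}_x;\vec{\alpha})\models\varphi$ in Rule~2(f) and its negation for II in Rule~3, both arithmetic in $\gcode{\varphi}$, whereas the wellordering, iterability and Dodd-Jensen-comparison clauses have their complexities absorbed into $(\llbracket\emptyset\rrbracket_R+\omega)\textnormal{-}\Pi^1_3$ uniformly. Thus ``I wins $G^{R;\varphi}$'' is a $\game((\llbracket\emptyset\rrbracket_R+\omega)\textnormal{-}\Pi^1_3)$ condition on $\gcode{\varphi}$, so the set of G\"{o}del codes of members of $0^{3\#}(R)$ is a $\game((\llbracket\emptyset\rrbracket_R+\omega)\textnormal{-}\Pi^1_3)$ subset of $\omega$, proving the lemma. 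The only slightly delicate point is the uniqueness of $\Theta(C)$ across different firm sets of potential level-3 indiscernibles, which is extracted from game determinacy as in the second paragraph rather than directly from Definition~\ref{def:level-3_indiscernibles}.
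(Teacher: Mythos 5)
Your route is the paper's own: the printed proof of this lemma is the single remark that the proof of Lemma~\ref{lem:the-measure-W3R} already does the work, namely that $\gcode{\varphi} \in 0^{3\#}(R)$ is decided by who wins $G^{R;\varphi}$, a determined game whose payoff is $(\llbracket \emptyset \rrbracket_R+\omega)\textnormal{-}\Pi^1_3$ uniformly (in fact recursively) in $\gcode{\varphi}$, so that membership in $0^{3\#}(R)$ is a $\game((\llbracket \emptyset \rrbracket_R+\omega)\textnormal{-}\Pi^1_3)$ condition on $\gcode{\varphi}$. Your first and third paragraphs reproduce exactly this, and your uniformity observation (only rules 2(f) and 3 mention $\varphi$, and only arithmetically) is the right one. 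One small attribution point: $\llbracket \emptyset \rrbracket_R$ is the order type of $\rep(R)$ and is in general uncountable, so the determinacy of $G^{R;\varphi}$ is not a matter of finite or countable difference extensions of $\boldpi{3}$; the paper simply asserts determinacy of $(\llbracket \emptyset \rrbracket_R+\omega)\textnormal{-}\Pi^1_3$ games from $\boldpi{3}$-determinacy, relying on the transfer results of the first paper of the series.

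The soft spot is your second paragraph. The symmetrized Lemma~\ref{lem:the-measure-W3R} gives only this: if II wins with $\sigma$, then $(M_{2,\infty}^{-};[F]^R)\models\neg\varphi$ for every $F$ mapping into the \emph{particular} club $C_\sigma$ of $L$-Woodin cutpoints of $M_{2,\infty}^{-}(\sigma)$ and their limits. The boundedness step (Corollary~\ref{coro:Sigma13-boundedness}) is applied to iterates of $M_2^{\#}(\sigma)$, so it only bounds II's responses $\wocode{v'}^R_w$ below the next element of $C_\sigma$ above $F(w)$; to conclude $[v']^R \leq [v]^R$ and hence $\vec{\gamma}=[F]^R$, the argument needs $F(\suc_{<^R}(w))$ to dominate that bound, i.e., it needs $\ran(F)\subseteq C_\sigma$. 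It therefore does not rule out some \emph{other} firm set $C'$ of potential level-3 indiscernibles realizing $\varphi$: firmness does not imply closedness, two firm sets can have empty intersection even though both of their limit-point sets are clubs, so there need be no common $F$ at which to compare the two theories. Consequently ``exactly one of the two realizability statements holds'' is asserted but not proved, and the independence of $\Theta(C)$ from $C$ does not follow from completeness of each $\Theta(C)$ alone. The repair that matches what the paper actually uses is to take the $C$ of Definition~\ref{def:x_3_sharp} to be (or to be contained in) a set produced by the Lemma~\ref{lem:the-measure-W3R} construction, i.e., the $L$-Woodin cutpoints of $M_{2,\infty}^{-}(z)$ and their limits for a $z$ coding winning strategies in all the games $G^{R;\varphi}$; then $C\subseteq C_{\sigma}$ for each chosen strategy, the equivalence ``$\gcode{\varphi}\in 0^{3\#}(R)$ iff I wins $G^{R;\varphi}$'' is immediate from determinacy, and no uniqueness claim across all firm sets is needed. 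With that reading the rest of your argument goes through and is the paper's.
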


\subsection{The equivalence of $x^{3\#}$ and $M_2^{\#}(x)$}
\label{sec:reduction-3sharp_to_M2sharp}

For the other direction of the reduction, we want to  compute $\game(\lessthanshort{u_{\omega}}\textnormal{-} \Pi^1_3)$ truth using $0^{3\#}$ as an oracle. 

\begin{mylemma}
  \label{lem:M2sharp_from_3sharp_level_by_level}
  Assume $\boldpi{3}$-determinacy. For  a finite level-3 tree ${R}$, the universal $\game( \llbracket \emptyset \rrbracket_R\textnormal{-} \Pi^1_3)$ real is many-one reducible to $0^{3\#}(R)$, uniformly in $ R$.
\end{mylemma}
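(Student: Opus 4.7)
The plan is to produce, uniformly in a code of $R$, a recursive map $x \mapsto \gcode{\varphi_x^R}$ from reals to $\mathcal{L}^R$-sentences such that $x$ belongs to the universal $\game(\llbracket \emptyset \rrbracket_R\textnormal{-}\Pi^1_3)$ set iff $\gcode{\varphi_x^R} \in 0^{3\#}(R)$. The translation is to run via a Moschovakis-style analysis of the game quantifier on transfinite differences of $\Pi^1_3$ sets: using the scale property of $\Pi^1_3$ together with the $M_2^{\#}$-based computation of winning strategies, the statement ``Player I wins the game whose payoff is the $\llbracket\emptyset\rrbracket_R\textnormal{-}\Pi^1_3$ set determined by $x$'' should be equivalent to a first-order assertion in $(M_{2,\infty}^{-};\vec{\gamma})$, where the parameters $\vec{\gamma}$ are precisely the index ordinals below $\llbracket\emptyset\rrbracket_R$ that arise in the unwinding of the game quantifier.

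First I would make the universal $\game(\llbracket\emptyset\rrbracket_R\textnormal{-}\Pi^1_3)$ set explicit as a game whose payoff set varies with $x$ through a $\llbracket\emptyset\rrbracket_R$-difference of $\Pi^1_3$ sets; then, by third-periodicity-style arguments adapted to the mouse $M_2^{\#}$, the question of who wins translates into a transfinite recursion on ordinals below $\llbracket\emptyset\rrbracket_R$ carried out inside $M_{2,\infty}^{-}$. Second, I would invoke the key matching fact that $\llbracket\emptyset\rrbracket_R$ is precisely the supremum, over $F \in C^{R\uparrow}$ and $r \in \dom(R)$, of the values $[F]^R_r$, where $C$ is the firm set of potential level-3 indiscernibles produced in Lemma~\ref{lem:the-measure-W3R}. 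Consequently every index ordinal encountered in the recursion is realizable as a component of some $\vec{\gamma} \in [C]^{R\uparrow}$, and the resulting assertion rewrites as an $\mathcal{L}^R$-sentence $\varphi_x^R$. Since $C$ is a set of potential level-3 indiscernibles for $M_{2,\infty}^{-}$, the truth of $\varphi_x^R$ in $(M_{2,\infty}^{-};\vec{\gamma})$ does not depend on which $\vec{\gamma} \in [C]^{R\uparrow}$ is chosen, so by Definition~\ref{def:x_3_sharp} it is recorded in $0^{3\#}(R)$.

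The main obstacle I anticipate is the fine-tuning of the complexity bound. Lemma~\ref{lem:3sharp_from_M2sharp} produced $0^{3\#}(R)$ at level $\game((\llbracket\emptyset\rrbracket_R+\omega)\textnormal{-}\Pi^1_3)$, with the extra $\omega$ accounting for iterability checks, coding of the premouse $\mathcal{P}_x$, and Dodd-Jensen comparisons that were needed to go from $M_2^{\#}$-truth down to $0^{3\#}$. In the present reverse direction, the unwinding of $\game$ on a $\llbracket\emptyset\rrbracket_R\textnormal{-}\Pi^1_3$ payoff should use exactly the ordinal hierarchy captured by $R$ and nothing more, so the bookkeeping ought to close up without requiring additional levels beyond $\llbracket\emptyset\rrbracket_R$; verifying this tightness and confirming that no auxiliary ordinals enter during the scale analysis will be the heart of the argument. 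Finally, uniformity in $R$ follows because the construction of $\varphi_x^R$ is presented as an effective transformation of a code for $R$ together with an index for $x$.
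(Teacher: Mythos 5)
There is a genuine gap, and it sits at the center of your argument: the ``key matching fact'' you invoke is false. The ordinal $\llbracket\emptyset\rrbracket_R$ is the order type of $\rep(R)\res E$ for a club $E$ (it is the length of the difference hierarchy in the payoff and is fixed once $R$ is), whereas $\sup\set{[F]^R_r}{F \in C^{R\uparrow},\ r \in \dom(R)} = \bolddelta{3}$, because a firm set $C$ is cofinal in $\bolddelta{3}$. So the index ordinals $\alpha < \llbracket\emptyset\rrbracket_R$ are not ``realizable as components of some $\vec{\gamma} \in [C]^{R\uparrow}$,'' and the interface between the payoff and the tree $R$ is not the one you describe. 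What actually happens in the paper's proof: each index $\alpha$ is matched with the position $\rho^E(\alpha+1) \in \rep(R)\res E$ via the order isomorphism $\rho^E$, and a single order-preserving continuous $H \in (\bolddelta{3})^{R\uparrow}$ supplies a transfinite ladder of bounds $H(\rho^E(\alpha+1))$ for the $\Pi^1_3$-norm $\psi$ (obtained from the Becker--Kechris--Martin representation of $B$) evaluated along a run of the game. The finite tuple $\vec{\gamma} = [H]^R$ is what the constants of $\mathcal{L}^R$ denote; its role is that it determines $H$ on $\rep(R)\res E'$ for a club $E'$, which is precisely what makes the two assertions $\theta^I(\vec{\gamma})$ (``I has a strategy bounded by some such $H$'') and $\theta^{II}(\vec{\gamma})$ mutually exclusive: intersect the clubs, play the two strategies against each other, and run the induction along the difference hierarchy to a contradiction. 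Without that exclusivity the reduction fails, since both candidate sentences could a priori belong to the complete theory $0^{3\#}(R)$.

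Two further steps that your sketch does not supply are essential. First, to see that the true winner's strategy admits a bounding function with values in the indiscernibles, you need the level-3 boundedness result, Corollary~\ref{coro:Sigma13-boundedness}, applied to the $\Sigma^1_3(\cdot,\lessthanshort{u_{\omega}})$ sets of positions reachable under that strategy; this is the analogue of $\Sigma^1_1$-boundedness in the classical computation of $\game(\alpha\textnormal{-}\Pi^1_1)$ from $0^{\#}$, and nothing in a ``third-periodicity-style'' appeal replaces it (the paper uses third periodicity only after Theorem~\ref{thm:M2sharp_3sharp_equivalent} is proved, not in this direction of the reduction). Second, $\theta^I(\vec{\gamma})$ quantifies over strategies and over functions $H \in (\bolddelta{3})^{R\uparrow}$, so it is not literally first-order over $(M_{2,\infty}^{-};\vec{\gamma})$; the paper converts it into a $\Sigma^1_4$ statement about a collapse-generic real and evaluates it in $(M_{2,\infty}^{-})^{\coll(\omega,\xi_{\vec{\gamma}})}$, pulling $\vec{\gamma}$ back to a preimage in some $\mathcal{P} \in \mathcal{F}_{2}$ and using elementarity of $\pi_{\mathcal{P},\infty}$. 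Your proposal would need to address both points before the translation $x \mapsto \gcode{\varphi^R_x}$ could be carried out.
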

\begin{proof}
Let $B \subseteq \llbracket \emptyset \rrbracket_R\times \mathbb{R}$ be $\Pi^1_3$. Let $\theta$ be a $\Sigma_1$ formula such that
\begin{displaymath}
  (\xi, x) \in B \eqiv \admistwo{x} \models \theta (\xi, x).
\end{displaymath} 
$G$ is the game with output $\Diff B$. 
We need to decide the winner of $G$ from $0^{3\#}(R)$. $B$ is equipped with the $\Pi^1_3$-norm
\begin{displaymath}
  \psi (\xi, x) = \text{the least } \alpha<\kappa_3^x \text{ such that } L_{\alpha}[T_2, x] \models \theta(\xi,x).
\end{displaymath}
If $E\in \mu_{\mathbb{L}}$ is a club, let $\rho^E: \llbracket \emptyset \rrbracket_R  \to \rep(R)\res E$ be the order preserving bijection.  
For $\vec{\gamma}$ respecting $R$, let  $\theta^{I}(\vec{\gamma})$ be the following formula:
\begin{quote}
  There exist $H \in (\bolddelta{3})^{R \uparrow}$ and a strategy $\tau$ for Player I such that $[H]^R = \vec{\gamma}$ and for any club $E \in \mu_{\mathbb{L}}$, if $x$ is an infinite run according to $\tau$, then 
  for any even $\alpha <\llbracket \emptyset \rrbracket_R$, $\forall \beta < \alpha ((\beta, x) \in B \wedge \psi(\beta, x) < H(\rho^E(\beta+1)))$ implies $(\alpha,x) \in B \wedge \psi(\alpha, x) < H(\rho^E(\alpha+1))$, and there is $\alpha < \llbracket \emptyset \rrbracket_R$ such that $(\alpha, x) \notin B$.
\end{quote}
Let  $\theta^{II}(\vec{\gamma})$ be the following formula:
\begin{quote}
  There exist $K \in (\bolddelta{3})^{R \uparrow}$ and a strategy $\sigma$ for Player II such that $[K]^R = \vec{\gamma}$ and for any club $E \in \mu_{\mathbb{L}}$, if $x$ is an infinite run according to $\sigma$, then for any odd $\alpha <\llbracket \emptyset \rrbracket_R$, $\forall \beta < \alpha((\beta, x) \in B  \wedge \psi(\beta, x) < K(\rho^E(\beta+1)))$ implies $(\alpha, x) \in B \wedge \psi(\alpha, x) < K(\rho^E(\alpha+1))$.
\end{quote}

 Let $C$ be  a firm set of level-3 indiscernibles for $M_{2, \infty}^{-}$. Suppose firstly Player I has a winning strategy $\tau$ in $G$. Let $D$ be the subset of $C$ consisting of  $L$-Woodin cardinals in $M_{2,\infty}(\sigma)$ and their limits. 
By Corollary~\ref{coro:Sigma13-boundedness}, if  $x$ is  a consistent run according to $\sigma$, then $(0,x) \in B\wedge \psi(0,x) < \min(D)$,  for any odd $\alpha < \llbracket \emptyset \rrbracket_R$,  $(\alpha, x) \in B$ implies $(\alpha+1,x) \in B \wedge \psi(\alpha+1, x) < \min(D \setminus (\psi(\alpha,x)+1))$, and there is $\alpha <\llbracket \emptyset \rrbracket_R$ such that $(\alpha, x) \notin B$. Let $H \in D^{R \uparrow}$. Then $(H, \tau)$ witnesses $\theta^I([H]^R)$. 
Let $\mathcal{P} \in \mathcal{F}_{2}$ and $\vec{\eta} \in \mathcal{P}$ such that $\pi_{\mathcal{P}, \infty}(\vec{\eta}) = [H]^R$.
Let $\xi_{\vec{\eta}}$  be the least successor cardinal cutpoint of $\mathcal{P}$ above $\max(\vec{\eta})$ and let $g$ be $\coll(\omega, \xi)$-generic over $\mathcal{P}$. Let $r_{g,\vec{\eta}}$ be the real coding $(g,\vec{\eta})$. Then $\theta^I([H]^R)$ is equivalent to a  $\Sigma^1_4(r_{g, \vec{\eta}})$ statement $\bar{\theta}^I(r_{g, \vec{\eta}})$, hence true in $\mathcal{P}[g]$. Hence,
\begin{displaymath}
  \mathcal{P}^{\coll(\omega, \xi_{\vec{\eta}})} \models \bar{\theta}^I(\dot{r}_{g, \vec{\eta}})
\end{displaymath}
By elementarity,
\begin{displaymath}
  (M_{2, \infty}^{-})^{\coll(\omega, \xi_{\vec{\gamma}})} \models \bar{\theta}^I (\dot{r}_{g, [H]^R}).
\end{displaymath}
By Lemma~\ref{lem:the-measure-W3R}, for any $\vec{\gamma} \in [C]^{R \uparrow}$, 
\begin{displaymath}
  (M_{2, \infty}^{-})^{\coll(\omega, \xi_{\vec{\gamma}})} \models \bar{\theta}^I (\dot{r}_{g,\vec{\gamma}}).
\end{displaymath}
By a symmetrical argument, if Player II has a winning strategy in $G$, then for any $\vec{\gamma} \in [C]^{R \uparrow}$, 
\begin{displaymath}
  (M_{2, \infty}^{-})^{\coll(\omega, \xi_{\vec{\gamma}})} \models \bar{\theta}^{II} (\dot{r}_{g,\vec{\gamma}}).
\end{displaymath}

Finally, there does not exist $\vec{\gamma}$ such that 
\begin{displaymath}
    (M_{2, \infty}^{-})^{\coll(\omega, \xi_{\vec{\gamma}})} \models \bar{\theta}^{I} (\dot{r}_{g,\vec{\gamma}}) \wedge \bar{\theta}^{II} (\dot{r}_{g,\vec{\gamma}}).
\end{displaymath}
Otherwise, by absoluteness, $\theta^I(\vec{\gamma}) \wedge \theta^{II}(\vec{\gamma})$ holds. Let $(H, \tau)$ witness $\theta^I(\vec{\gamma})$ and let $(K, \sigma)$ witness $\theta^{II}(\vec{\gamma})$. Let $E \in \mu_{\mathbb{L}}$ be a club such that $H \res (\rep(R) \res E) = K \res (\rep(R) \res E)$.
Let $x$ be the infinite run according to both $\tau$ and $\sigma$. Then inductively we can see that for any $\alpha < \llbracket \emptyset \rrbracket_R$, $(\alpha,x) \in B \wedge \psi(\alpha,x) < H(\rho^E(\alpha+1))$, but there is $\alpha < \llbracket \emptyset \rrbracket_R$ such that $(\alpha,x) \notin B$, which is impossible. 

In conclusion, Player I has a winning strategy in $B$ iff for any $\vec{\gamma} \in [C]^{R \uparrow}$,  $ (M_{2, \infty}^{-})^{\coll(\omega, \xi_{\vec{\gamma}})} \models \bar{\theta}^I (\dot{r}_{g,\vec{\gamma}})$.
\end{proof}

For  a real $x$, $x^{3\#}$ is the obvious relativization of $0^{3\#}$. 
Combining Lemmas~\ref{lem:3sharp_from_M2sharp} and \ref{lem:M2sharp_from_3sharp_level_by_level}, \cite[Theorem 3.1]{sharpI} and Neeman \cite{nee_opt_I,nee_opt_II}, we obtain the equivalence of $x^{3\#}$ and $M_2^{\#}(x)$.
\begin{mytheorem}
  \label{thm:M2sharp_3sharp_equivalent}
  Assume $\boldpi{3}$-determinacy. For $x \in \mathbb{R}$, $x^{3\#}$ is many-one equivalent to $M^{\#}_2(x)$, the many-one reduction being independent of $x$. 
\end{mytheorem}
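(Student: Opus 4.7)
The plan is to assemble the equivalence from four ingredients already in place: Lemma~\ref{lem:3sharp_from_M2sharp} for the upper bound on the complexity of each $x^{3\#}(R)$, Lemma~\ref{lem:M2sharp_from_3sharp_level_by_level} for the corresponding lower bound, Neeman's optimal determinacy theorems \cite{nee_opt_I,nee_opt_II} which tie $\game(\alpha\textnormal{-}\Pi^1_3)$-games with integer payoff to the $M_2^{\#}$-operator for $\alpha<u_{\omega}$, and \cite[Theorem~3.1]{sharpI} which characterizes $M_2^{\#}(x)$ as many-one equivalent to the universal $\game(\lessthanshort{u_{\omega}}\textnormal{-}\Pi^1_3(x))$ real. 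The obvious relativizations to $x\in\mathbb{R}$ of the lemmas and definitions of the previous subsections will be invoked without further comment; all constructions are visibly uniform in $x$.

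First I would establish $x^{3\#}\leq_m M_2^{\#}(x)$. By definition, $x^{3\#}$ is the map $R\mapsto x^{3\#}(R)$, where $R$ ranges over finite level-3 trees. Lemma~\ref{lem:3sharp_from_M2sharp} (relativized) gives, uniformly in $R$ and $x$, a many-one reduction of $x^{3\#}(R)$ to the universal $\game(\llbracket\emptyset\rrbracket_R+\omega)\textnormal{-}\Pi^1_3(x)$ real. Since $\llbracket\emptyset\rrbracket_R+\omega<u_{\omega}$ whenever $R$ is finite, Neeman's theorems imply that the winner of any such game, and hence the universal such real, is many-one computable from $M_2^{\#}(x)$, uniformly. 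Composing, we obtain a many-one reduction of $x^{3\#}$ to $M_2^{\#}(x)$ that does not depend on $x$.

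For the reverse direction $M_2^{\#}(x)\leq_m x^{3\#}$, I would use Lemma~\ref{lem:M2sharp_from_3sharp_level_by_level} (relativized), which produces, uniformly in $R$ and $x$, a many-one reduction of the universal $\game(\llbracket\emptyset\rrbracket_R\textnormal{-}\Pi^1_3(x))$ real to $x^{3\#}(R)$. As $R$ varies over all finite level-3 trees, $\llbracket\emptyset\rrbracket_R$ ranges cofinally through $u_{\omega}$ by Proposition~\ref{prop:level_3_tower_wellfounded_wellorder_equivalent} and the fact that every countable ordinal below $u_{\omega}$ is realized as the length of $<^R$ for some finite level-3 tree. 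Hence the universal $\game(\lessthanshort{u_{\omega}}\textnormal{-}\Pi^1_3(x))$ real is many-one reducible to $x^{3\#}$, uniformly in $x$. Invoking \cite[Theorem~3.1]{sharpI}, which asserts that $M_2^{\#}(x)$ is many-one equivalent to this universal real uniformly in $x$, completes the reduction.

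The only genuine obstacle is verifying that everything remains uniform in $x$: each of the four cited results was proved in a form where the constructions depended only on $x$ as a parameter, and concatenating finitely many such uniform reductions yields a single many-one reduction independent of $x$. No further combinatorial work on level-3 indiscernibles is needed at this stage.
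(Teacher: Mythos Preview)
Your proposal is correct and follows exactly the paper's own approach: the paper's proof is a single sentence that cites precisely the four ingredients you invoke (Lemmas~\ref{lem:3sharp_from_M2sharp} and~\ref{lem:M2sharp_from_3sharp_level_by_level}, \cite[Theorem~3.1]{sharpI}, and Neeman \cite{nee_opt_I,nee_opt_II}), and your write-up simply unpacks how they fit together. One minor slip: in the second direction you write ``every countable ordinal below $u_{\omega}$ is realized as the length of $<^R$'', but the relevant ordinals $\llbracket\emptyset\rrbracket_R$ are not countable (they lie between $\omega_1$ and $u_{\omega}$); what you need, and what you correctly state just before, is only that these values are cofinal in $u_{\omega}$ as $R$ ranges over finite level-3 trees.
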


By Theorem~\ref{thm:M2sharp_3sharp_equivalent} and Moschovakis third periodicity, the winner of the game in the proof of Lemma~\ref{lem:the-measure-W3R} has a winning strategy recursive in $0^{3\#}$. Hence, the set of $L$-Woodin cardinals in $M_{2,\infty}^{-}(0^{3\#})$ and their limits form a firm set of potential level-3 indiscernibles for $M_{2,\infty}^{-}$. 

\section*{Acknowledgements}
The breakthrough ideas of this paper were obtained during the AIM workshop on Descriptive inner model theory, held in Palo Alto, and the Conference on Descriptive Inner Model Theory, held in Berkeley, both in June, 2014. 
The author greatly benefited from conversations with Rachid Atmai and Steve Jackson that took place in these two conferences. 
The final phase of this paper was completed whilst the author was a visiting fellow at the Isaac Newton Institute for Mathematical Sciences in the programme `Mathematical, Foundational and Computational Aspects of the Higher Infinite' (HIF) in August and September, 2015 funded by NSF Career grant DMS-1352034 and EPSRC grant EP/K032208/1.

\bibliography{sharp}{}
\bibliographystyle{plain}

\end{document}